\theoremstyle{plain}
\newtheorem{theorem}{Theorem}[section]
\newtheorem{proposition}{Proposition}[section]
\newtheorem{corollary}[theorem]{Corollary}
\newtheorem{lemma}{Lemma}[section]
\theoremstyle{definition} 
\newtheorem{definition}[theorem]{Definition}
\newtheorem*{definition*}{Definition}
\newtheorem{question}[theorem]{Question}
\newcommand{\R}{\mathbb{R}}
\def \b {\beta}
\def\Ric{\text{Ric}}
\def\a{\alpha}
\def\g{\gamma}
\def\C{\mathbb{C}}
\def\R{\mathbb{R}}
\def\Z{\mathbb{Z}}
\def\S{\mathbb{S}}
\def\CP{\mathbb{CP}}
\def\CH{\mathbb{CH}}
\def\vp{\varphi}
\def\id{\operatorname{id}}
\def\Ric{\operatorname{Ric}}
\def\tr{\operatorname{tr}}
\def\spn{\operatorname{span}}
\numberwithin{equation}{section}
\newcommand*\owedge{\mathpalette\@owedge\relax}
\newcommand*\@owedge[1]{%
  \mathbin{%
    \ooalign{%
      $#1\m@th\bigcirc$\cr
      \hidewidth$#1\m@th\wedge$\hidewidth\cr
    }%
  }%
}
\begin{document}

\title[K\"ahler manifolds and secondary curvature operator]{K\"ahler manifolds and the curvature operator of the second kind}

\author{Xiaolong Li}\thanks{The author's research is partially supported by Simons Collaboration Grant \#962228 and a start-up grant at Wichita State University}
\address{Department of Mathematics, Statistics and Physics, Wichita State University, Wichita, KS, 67260}
\email{xiaolong.li@wichita.edu}

\subjclass[2020]{53C55, 53C21}

\keywords{Curvature operator of the second kind, orthogonal bisectional curvature, holomorphic sectional curvature, rigidity theorems}

\begin{abstract}
This article aims to investigate the curvature operator of the second kind on K\"ahler manifolds.  The first result states that an $m$-dimensional K\"ahler manifold with $\frac{3}{2}(m^2-1)$-nonnegative (respectively, $\frac{3}{2}(m^2-1)$-nonpositive) curvature operator of the second kind must have constant nonnegative (respectively, nonpositive) holomorphic sectional curvature. The second result asserts that a closed $m$-dimensional K\"ahler manifold with $\left(\frac{3m^3-m+2}{2m}\right)$-positive curvature operator of the second kind has positive orthogonal bisectional curvature, thus being biholomorphic to $\CP^m$. We also prove that $\left(\frac{3m^3+2m^2-3m-2}{2m}\right)$-positive curvature operator of the second kind implies positive orthogonal Ricci curvature. Our approach is pointwise and algebraic. 
\end{abstract}

\maketitle

\section{Introduction}
The Riemann curvature tensor on a Riemannian manifold $(M^n, g)$ induces a self-adjoint operator $\overline{R}:S^2(T_pM) \to S^2(T_pM)$ via 
\begin{equation*}
    \overline{R}(\vp)_{ij}=\sum_{k,l=1}^n R_{iklj}\vp_{kl},
\end{equation*}
where $S^2(T_pM)$ is the space of symmetric two-tensors on the tangent space $T_pM$. 
\textit{The curvature operator of the second kind}, denoted by $\mathring{R}$ throughout this article, refers to the symmetric bilinear form
$$\mathring{R}:S^2_0(T_pM)\times S^2_0(T_pM) \to \R$$ 
obtained by restricting $\overline{R}$ to $S^2_0(T_pM)$, the space of traceless symmetric two-tensors. See \cite{CGT21} or \cite{Li21} for a detailed discussion.
This terminology is due to Nishikawa \cite{Nishikawa86}, who conjectured in 1986 that a closed Riemannian manifold with positive (respectively, nonnegative) curvature operator of the second kind is diffeomorphic to a spherical space form (respectively,  Riemannian locally symmetric space).

Nishikawa's conjecture had remained open for more than three decades before its positive part was resolved by Cao, Gursky, and Tran \cite{CGT21} recently and its nonnegative part was settled by the author \cite{Li21} shortly after. 
The key observation in \cite{CGT21} is that two-positive curvature operator of the second kind implies the strictly PIC1 condition (i.e., $M\times \R$ has positive isotropic curvature). 
This is sufficient since earlier work of Brendle \cite{Brendle08} has shown that a solution to the normalized Ricci flow starting from a strictly PIC1 metric on a closed manifold exists for all time and converges to a metric of constant
positive sectional curvature. 
Soon after that, the author \cite{Li21} proved that strictly PIC1 is implied by three-positivity of $\mathring{R}$, thus getting an immediate improvement to the result in \cite{CGT21}. 
Furthermore, the author was able to resolve the nonnegative case of Nishikawa's conjecture under three-nonnegativity of $\mathring{R}$. 
More recently, the conclusion has been strengthened by Nienhaus, Petersen, and Wink \cite{NPW22}, who ruled out irreducible compact symmetric spaces by proving that an $n$-dimensional closed Riemannian manifold with $\frac{n+2}{2}$-nonnegative $\mathring{R}$ is either flat or a rational homology sphere. 
Combining these works together, we have
\begin{theorem}\label{thm 3 positive}
Let $(M^n,g)$ be a closed Riemannian manifold of dimension $n\geq 3$. 
\begin{enumerate}
    \item If $M$ has three-positive curvature operator of the second kind, then $M$ is diffeomorphic to a spherical space form. 
    \item If $M$ has three-nonnegative curvature operator of the second kind, then $M$ is either flat or diffeomorphic to a spherical space form. 
\end{enumerate}
\end{theorem}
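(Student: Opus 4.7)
The plan is to reduce both parts of the theorem to Brendle's Ricci-flow convergence result under the strictly PIC1 condition, and to perform that reduction pointwise via an algebraic identity on the space $S^2_0(T_pM)$. Recall that strictly PIC1 at a point amounts to
\[
R_{1313} + \lambda^2 R_{1414} + R_{2323} + \lambda^2 R_{2424} - 2\lambda R_{1234} > 0
\]
for every orthonormal 4-frame $\{e_1,\ldots,e_4\}$ and every $\lambda\in[0,1]$, and is equivalent to $M\times\R$ having strictly positive isotropic curvature. The overall strategy is: (i) produce three traceless symmetric 2-tensors $\varphi_1,\varphi_2,\varphi_3$ whose $\mathring R$-sum reproduces the PIC1 expression up to a positive factor; (ii) invoke Brendle's convergence theorem in the strict case; (iii) in the nonnegative case, handle the degeneracy by a strong maximum principle together with a holonomy/splitting argument, and rule out irreducible symmetric factors by a Bochner-Weitzenb\"ock estimate on $p$-forms.

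For part (1), the test tensors I would try are the off-diagonal pair $\varphi_1 = e_1\odot e_3 + \lambda\, e_2\odot e_4$ and $\varphi_2 = e_2\odot e_3 - \lambda\, e_1\odot e_4$, together with a diagonal tensor $\varphi_3$ built from $e_i\odot e_i$ that is tuned to enforce tracelessness and to cancel extraneous sectional-curvature contributions. Expanding $\mathring R(\varphi,\varphi)=\sum R_{iklj}\varphi_{ij}\varphi_{kl}$ and applying the first Bianchi identity should give an identity of the shape
\[
\mathring R(\varphi_1,\varphi_1) + \mathring R(\varphi_2,\varphi_2) + \mathring R(\varphi_3,\varphi_3) = c\,\bigl(R_{1313}+\lambda^2 R_{1414}+R_{2323}+\lambda^2 R_{2424}-2\lambda R_{1234}\bigr)
\]
for an explicit $c>0$. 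Three-positivity of $\mathring R$ then forces the right-hand side to be positive, so $M\times\R$ is strictly PIC1, and Brendle's theorem produces a normalized Ricci flow deforming $g$ to a metric of constant positive sectional curvature, hence a diffeomorphism to a spherical space form.

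For part (2), the same identity upgrades three-nonnegativity of $\mathring R$ to nonnegative isotropic curvature on $M\times\R$. If the PIC1 quantity is strictly positive somewhere, the evolution of the PIC1 cone under the Ricci flow together with Hamilton's strong maximum principle makes $g$ strictly PIC1 at all positive times, and part (1) applies. Otherwise an Uhlenbeck--Hamilton null-eigenvector analysis yields a parallel splitting of $TM$, so $M$ decomposes as a Riemannian product of flat and irreducible non-flat factors; the Nienhaus--Petersen--Wink Weitzenb\"ock estimate, which requires only $\tfrac{n+2}{2}$-nonnegativity of $\mathring R$ (implied by three-nonnegativity when $n\geq 4$), then forces every non-flat irreducible factor to have vanishing harmonic $p$-forms for $1\leq p\leq n-1$ and hence to be a rational homology sphere, precluding irreducible symmetric factors of rank $\geq 2$. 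The remaining alternatives are flat or diffeomorphic to a spherical space form. The main obstacle is the first step: identifying the precise $\lambda$-parameter triple of traceless tensors that reproduces the PIC1 expression on the nose. Cao--Gursky--Tran achieved \emph{some} positive multiple with two tensors; the improvement to three is a careful combinatorial balance between the trace constraint and the cross term $R_{1234}$, and the sharpness of this step is what drives the rigidity discussion in part (2).
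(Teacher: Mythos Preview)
The paper does not prove this theorem itself; it is stated in the introduction as the combined outcome of \cite{CGT21}, \cite{Li21}, and \cite{NPW22}, with no further argument given. Your outline does correctly capture the architecture of those references---reduce to strictly PIC1 via a pointwise algebraic identity on $S^2_0(T_pM)$ and invoke Brendle for part~(1); for part~(2), combine a borderline analysis with the Nienhaus--Petersen--Wink Bochner estimate to eliminate non-spherical locally symmetric possibilities.

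That said, the step you yourself flag as ``the main obstacle'' is a genuine gap, and your proposed tensors do not work. The pair $\varphi_1 = e_1\odot e_3 + \lambda\, e_2\odot e_4$, $\varphi_2 = e_2\odot e_3 - \lambda\, e_1\odot e_4$ has $\lambda$-dependent norms, so three-positivity does not apply to them as stated. More seriously, expanding via $\mathring{R}(e_i\odot e_j, e_k\odot e_l) = 2(R_{iklj}+R_{ilkj})$ and the first Bianchi identity gives
\[
\mathring{R}(\varphi_1,\varphi_1)+\mathring{R}(\varphi_2,\varphi_2) = 2\bigl(R_{1313}+\lambda^2 R_{1414}+R_{2323}+\lambda^2 R_{2424}\bigr) - 12\lambda R_{1234},
\]
which is not a positive multiple of the PIC1 form: the cross-term-to-sectional ratio is $6$ rather than $2$. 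A purely diagonal $\varphi_3=\sum a_i\,e_i\odot e_i$ contributes only sectional curvatures $R_{ijij}$ (with coefficients proportional to $-a_ia_j$), so it cannot repair that ratio without introducing unwanted $R_{1212}$ or $R_{3434}$ terms; the constraints $a_1a_2=a_3a_4=0$ together with $a_1a_3,a_2a_3\neq 0$ are already inconsistent. The triple actually used in \cite{Li21} is different from what you propose. Separately, your part~(2) sketch tacitly needs $n\ge 4$, both for the existence of an orthonormal $4$-frame in $T_pM$ and for three-nonnegativity to imply $\tfrac{n+2}{2}$-nonnegativity; the case $n=3$ requires a separate argument.
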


To prove sharper results, the author \cite{Li22JGA} introduced the notion of $\alpha$-positive curvature operator of the second kind for $\alpha \in  [1,N]$, where $N=\dim(S^2_0(T_p M))$. Hereafter, $\lfloor x \rfloor$ denotes the floor function defined by
\begin{equation*}
    \lfloor x \rfloor := \max \{k \in \Z: k \leq x\}. 
\end{equation*}
\begin{definition*}
A Riemannian manifold $(M^n,g)$ is said to have $\alpha$-positive (respectively, $\a$-nonnegative) curvature operator of the second kind if for any $p\in M$ and any orthonormal basis $\{\vp_i\}_{i=1}^N$ of $S^2_0(T_pM)$, it holds that
\begin{equation}\label{alpha positive def}
     \sum_{i=1}^{\lfloor \a \rfloor} \mathring{R}(\vp_i,\vp_i) +(\a -\lfloor \a \rfloor) \mathring{R}(\vp_{\lfloor \a \rfloor+1},\vp_{\lfloor \a \rfloor+1}) > (\text{respectively,} \geq) \  0. 
\end{equation}
Similarly, $(M^n,g)$ is said to have $\a$-negative (respectively, $\a$-nonpositive) curvature operator of the second kind if the reversed inequality holds. 
\end{definition*}

In \cite{Li22JGA}, the author proved that $\left(n+\frac{n-2}{n}\right)$-positive (respectively, $\left(n+\frac{n-2}{n}\right)$-nonnegative) curvature operator of the second kind implies positive (respectively, nonnegative) Ricci curvature in all dimensions. Combined with Hamilton's work \cite{Hamilton82, Hamilton86}, this immediately leads to an improvement of Theorem \ref{thm 3 positive} in dimension three: a closed three-manifold with $3\frac{1}{3}$-positive $\mathring{R}$ is diffeomorphic to a spherical space form and a closed three-manifold with $3\frac{1}{3}$-nonnegative $\mathring{R}$ is either flat, or diffeomorphic to a spherical space form, or diffeomorphic to a quotient of $\S^2 \times \R$.
Note that the number $3\frac{1}{3}$ is optimal, as the eigenvalues of $\mathring{R}$ on $\S^2 \times \S^1$ are given by $\{-\frac{1}{3}, 0, 0, 1, 1\}$ (see \cite[Example 2.6]{Li21}). 

Another result obtained in \cite{Li22JGA} states that $4\frac{1}{2}$-positive (respectively, $4\frac{1}{2}$-nonnegative) curvature operator of the second kind implies positive (respectively, nonnegative) isotropic curvature in dimensions four and above. This was previously shown in \cite{CGT21} under the stronger condition of four-positivity (respectively, four-nonnegativity) of $\mathring{R}$. 
In view of Micallef and Moore's work \cite{MM88}, one concludes that a closed Riemannian manifold of dimension $n\geq 4$ with $4\frac{1}{2}$-positive $\mathring{R}$ is homeomorphic to a spherical space form. Moreover, the ``homeomorphism" can be upgraded to ``diffeomorphism" if either $n=4$ or $n\geq 12$, using Hamilton's work \cite{Hamilton97} or that of Brendle \cite{Brendle19}, respectively. 
The number $4\frac{1}{2}$ is optimal in dimension four as both $\CP^2$ and $\S^3\times \S^1$ has $\a$-positive $\mathring{R}$ for any $\a>4\frac{1}{2}$. 
In addition, a rigidity result for closed manifolds with $4\frac{1}{2}$-nonnegative $\mathring{R}$ was also obtained in \cite[Theorem 1.4]{Li22JGA} (see also \cite[Theorem 1.4]{Li22product} for an improvement).

This article aims to investigate the curvature operator of the second kind on K\"ahler manifolds. Such study dates at least back to Bourguignon and Karcher \cite{BK78}, who computed in 1978 that the eigenvalues of $\mathring{R}$ on $(\CP^m,g_{FS})$, the complex projective space with the Fubini-Study metric normalized with constant holomorphic sectional curvatures $4$, are given by: $-2$ with multiplicity $m^2-1$ and $4$ with multiplicity $m(m+1)$. 
One immediately sees that $\CP^m$ has $\a$-positive $\mathring{R}$ for any $\a > \frac{3}{2}(m^2-1)$ but not for any $\a \leq \frac{3}{2}(m^2-1)$. 
As $\CP^m$ is often considered as the K\"ahler manifold with ``most positive curvature", one may speculate that there might not be any K\"ahler manifold with $\a$-positive $\mathring{R}$ for $\a \leq \frac{3}{2}(m^2-1)$.
Our first main result confirms this speculation. 
\begin{theorem}\label{thm flat}
Let $(M^m,g,J)$ be a K\"ahler manifold of complex dimension $m\geq 2$. 
\begin{enumerate}
    \item If $M$ has $\a$-nonnegative (respectively, $\a$-nonpositive) curvature operator of the second kind for some $\a < \frac{3}{2}(m^2-1)$, then $M$ is flat. 
    \item If $M$ has $\frac{3}{2}(m^2-1)$-nonnegative (respectively, $\frac{3}{2}(m^2-1)$-nonpositive) curvature operator of the second kind, then $M$ has constant nonnegative (respectively, nonpositive) holomorphic sectional curvature.   
\end{enumerate}
\end{theorem}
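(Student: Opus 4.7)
The plan is pointwise and algebraic, exploiting the $J$-decomposition of symmetric two-tensors. Fix $p\in M$ and let $\Pi$ be the involution on $S^2(T_pM)$ defined by $(\Pi\varphi)(X,Y) := \varphi(JX,JY)$; its $\pm1$-eigenspaces $S^2_\pm$ have real dimensions $m^2$ and $m(m+1)$, and since $g\in S^2_+$ one obtains the orthogonal decomposition
\[
S^2_0(T_pM) = S^2_{+,0}\oplus S^2_-, \qquad \dim S^2_{+,0} = m^2-1, \quad \dim S^2_- = m(m+1).
\]
The K\"ahler identity $R(X,Y,JZ,JW)=R(X,Y,Z,W)$ (consequence of $\nabla J=0$) implies that $\overline{R}$ commutes with $\Pi$, so $\mathring{R}$ respects this decomposition and splits as $\mathring{R}_+\oplus\mathring{R}_-$. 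Moreover, $\tr\mathring{R}_\pm$ are $U(m)$-invariant linear functionals of the K\"ahler curvature tensor, hence scalar multiples of $\Scal$; normalizing against the Fubini--Study metric on $\CP^m$ (whose $\mathring{R}$ has eigenvalues $-2$ on $S^2_{+,0}$ and $+4$ on $S^2_-$, with $\Scal=4m(m+1)$) pins the constants to
\[
\tr\mathring{R}_+ = -\frac{m-1}{2m}\Scal, \qquad \tr\mathring{R}_- = \Scal.
\]

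The main step derives $\mathring{R}_-=c\cdot\id$ from $\tfrac{3}{2}(m^2-1)$-nonnegativity. Let $\mu_1\leq\cdots\leq\mu_{m(m+1)}$ be the eigenvalues of $\mathring{R}_-$ and apply the hypothesis to the orthonormal family consisting of a full eigenbasis of $\mathring{R}_+$ together with the $\tfrac{m^2-1}{2}$ smallest eigenvectors of $\mathring{R}_-$ (with a fractional correction when $m$ is even). Combined with the trace of $\mathring{R}_+$, this yields
\[
\sum_{j=1}^{(m^2-1)/2}\mu_j \geq \frac{m-1}{2m}\Scal = \frac{m^2-1}{2}\cdot\frac{\Scal}{m(m+1)}.
\]
Since the average of the smallest $\tfrac{m^2-1}{2}$ of the $\mu_j$'s cannot exceed their global mean $\Scal/(m(m+1))$, equality must hold throughout, so all $\mu_j$ coincide and $\mathring{R}_-=c\,\id$ with $c=\Scal/(m(m+1))$. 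For any unit $X\in T_pM$ the tensor $\varphi_X := \tfrac{1}{\sqrt 2}(X\otimes X - JX\otimes JX)$ is a unit element of $S^2_-$, and a direct computation using the Riemann symmetries shows that $\mathring{R}(\varphi_X,\varphi_X)$ equals the holomorphic sectional curvature $H(X)$ up to a universal sign. Hence $H(X)\equiv \pm c$ for all unit $X$, so $M$ has constant holomorphic sectional curvature.

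To see $c\geq 0$ in the nonnegative case, we apply the hypothesis to a second test family. For $m\leq 3$ we have $\tfrac{3}{2}(m^2-1)\leq m(m+1)$, so we can select this many orthonormal tensors entirely inside $S^2_-$ and obtain directly $\tfrac{3}{2}(m^2-1)\,c\geq 0$, hence $c\geq 0$. For $m\geq 4$ we take the family consisting of all of $S^2_-$ together with the $\tfrac{(m-3)(m+1)}{2}$ smallest eigenvectors of $\mathring{R}_+$, and repeat the smallest-average-vs-overall-average argument on $\mathring{R}_+$ using $\tr\mathring{R}_+=-\tfrac{m-1}{2m}\Scal$; this forces $\Scal\geq 0$, hence $c\geq 0$. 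The nonpositive case is symmetric. Part (1) follows by running both bounds with the strict inequality $\a<\tfrac{3}{2}(m^2-1)$: the arguments now yield both $\Scal\leq 0$ and $\Scal\geq 0$, hence $\Scal=0$; a discrete convexity argument on the partial sums of the $\mu_j$'s (and analogously the $\lambda_i$'s) then forces all eigenvalues of $\mathring{R}$ to vanish, so $\mathring{R}\equiv 0$ on $S^2_0$. Combined with $\Ric=0$ (forced by $\Scal=0$ and the self-adjointness of $\overline{R}$, which places $\overline{R}(g)\in\R g$), this gives $\overline{R}\equiv 0$ on all of $S^2$; the first Bianchi identity together with the induced total antisymmetry then forces $R\equiv 0$, i.e., flatness. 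The main obstacles are verifying K\"ahler preservation of the decomposition in the first step and the bookkeeping of fractional $\a$-corrections together with the low-vs-high dimension case split in the eigenvalue argument.
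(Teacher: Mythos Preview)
Your approach is essentially the paper's: your $S^2_{+,0}\oplus S^2_-$ is exactly the paper's $E^-\oplus E^+$ (with opposite sign conventions), your trace identities $\tr\mathring{R}_+=-\tfrac{m-1}{2m}S$ and $\tr\mathring{R}_-=S$ are the paper's Lemmas~4.3 and~4.4, and your ``smallest-average versus overall-average'' step is the paper's Lemma~5.1. Your extra observation that $\mathring{R}$ is block-diagonal for $\Pi$ is correct (and not stated in the paper), and computing the traces via $U(m)$-invariance is a clean shortcut.

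There is, however, a genuine gap in your flatness argument for part~(1). The claim that ``self-adjointness of $\overline{R}$ places $\overline{R}(g)\in\R g$'' is false: one always has $\overline{R}(g)=-\Ric$, and neither $\mathring{R}\equiv 0$ on $S^2_0$ nor $\Scal=0$ forces $\Ric\in\R g$ by self-adjointness alone (self-adjointness only tells you the off-diagonal blocks of $\overline{R}$ with respect to $\R g\oplus S^2_0$ are transposes, which is automatic). The subsequent ``total antisymmetry plus Bianchi'' step is therefore unsupported. Also, your ``second bound'' for $\Scal\ge 0$ in the strict case uses $\tfrac{3}{2}(m^2-1)$ test tensors, which exceeds $\alpha$; you need monotonicity ($\alpha$-nonnegative $\Rightarrow$ $\tfrac{3}{2}(m^2-1)$-nonnegative) to invoke it.

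The fix is much shorter than your proposed route. By monotonicity, $\alpha$-nonnegativity for $\alpha<\tfrac{3}{2}(m^2-1)$ implies $\tfrac{3}{2}(m^2-1)$-nonnegativity, so part~(2) already gives pointwise constant holomorphic sectional curvature $c=\Scal/(m(m+1))\ge 0$. Running your first test family with the strict $\alpha$ then gives $\Scal\le 0$ (exactly as in the paper's alternative proof), so $c=0$; since a K\"ahler algebraic curvature tensor is determined by its holomorphic sectional curvatures, $R=0$. (Equivalently: $\mathring{R}\equiv 0$ on $S^2_0$ already kills all sectional curvatures via $\mathring{R}(\tfrac{1}{\sqrt 2}e_i\odot e_j,\tfrac{1}{\sqrt 2}e_i\odot e_j)=R_{ijij}$, hence $R=0$.) Finally, you should invoke Schur's lemma for K\"ahler manifolds to pass from pointwise constant to globally constant holomorphic sectional curvature in part~(2), as the paper does.
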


Previously, the author \cite[Theorem 1.9]{Li21} proved that K\"ahler manifolds with four-nonnegative $\mathring{R}$ are flat, and Nienhaus, Petersen, Wink, and Wylie \cite{NPWW22} proved the following result.
\begin{theorem}\label{thm NPWW}
Let $(M^m,g,J)$ be a K\"ahler manifold of complex dimension $m \geq 2$. Set 
\begin{equation*}
    A=\begin{cases} 3m\frac{m+1}{m+2}, & \text{ if } $m$ \text{ is even}, \\
    3m\frac{(m+1)(m^2-1)}{(m+2)(m^2+1)}, & \text{ if } $m$ \text{ is odd}.
    \end{cases}
\end{equation*}
If the curvature operator of the second kind of $M$ is $\a$-nonnegative or $\a$-nonpositive for some $\a <A$, then $M$ is flat.
\end{theorem}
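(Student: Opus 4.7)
My plan would be to exploit the K\"ahler structure by decomposing
\begin{equation*}
S^2_0(T_pM) = \mathcal{B} \oplus \mathcal{A},
\end{equation*}
where $\mathcal{B}$ consists of traceless $J$-invariant symmetric tensors (the traceless Hermitian forms, of real dimension $m^2-1$) and $\mathcal{A}$ consists of $J$-anti-invariant symmetric tensors (of real dimension $m(m+1)$, automatically traceless because $\psi(JX,JY) = -\psi(X,Y)$ implies $\psi(e_a,e_a) + \psi(Je_a,Je_a) = 0$ in any unitary frame). The K\"ahler identities $R(JX,JY,Z,W) = R(X,Y,Z,W) = R(X,Y,JZ,JW)$ imply that $\overline{R}$ commutes with the involution $\varphi \mapsto \varphi(J\cdot, J\cdot)$, so $\mathcal{B}$ and $\mathcal{A}$ are $\mathring{R}$-invariant subspaces. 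On $\CP^m$ they are precisely the two Bourguignon--Karcher eigenspaces, with eigenvalues $-2$ on $\mathcal{B}$ and $+4$ on $\mathcal{A}$.

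Fixing a unitary frame $\{e_a, Je_a\}_{a=1}^m$, I would construct explicit orthonormal bases of $\mathcal{B}$ and $\mathcal{A}$ from diagonal and off-diagonal Hermitian (respectively $J$-anti-invariant) tensors, and evaluate $\mathring{R}(\varphi,\varphi)$ on each using the K\"ahler symmetries. Each evaluation reduces to an explicit combination of holomorphic sectional curvatures $H(e_a)$, bisectional curvatures $R(e_a, Je_a, Je_b, e_b)$, and orthogonal sectional curvatures $K(e_a, e_b)$ and $K(e_a, Je_b)$. The key observation is that $\alpha$-nonnegativity is equivalent to the sum of the $\alpha$ smallest eigenvalues of $\mathring{R}$ being $\geq 0$, so it suffices to exhibit an explicit subspace $V \subset S^2_0(T_pM)$ of dimension $\lfloor A \rfloor$ (with suitable fractional weighting for the next vector) whose trace is bounded above by a negative multiple of a curvature quantity that vanishes only when $M$ is flat.

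I would take $V$ to consist of (a subset of) $\mathcal{B}$ together with a carefully selected subset of $\mathcal{A}$, designed so that the trace formula combines into $-c\cdot (\Scal + \text{nonnegative bisectional remainder})$ for an explicit positive $c$. Since the sum of the smallest $\lfloor A \rfloor$ eigenvalues is bounded above by $\operatorname{tr}(\mathring{R}|_V)$, the $\alpha$-nonnegativity hypothesis for some $\alpha < A$ forces this trace to be nonnegative, which forces both $\Scal = 0$ and the bisectional remainder to vanish; combined with the fact that a K\"ahler curvature tensor with vanishing holomorphic and bisectional curvatures is trivial, this yields $\Rm \equiv 0$. The parallel nonpositivity hypothesis is handled by the symmetric construction swapping the roles of $\mathcal{B}$ and $\mathcal{A}$ and reversing signs throughout.

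The main obstacle is identifying the correct subspace $V$ so that the trace formula produces exactly the parity-dependent factor appearing in $A$: $\tfrac{m+1}{m+2}$ for even $m$, degrading to $\tfrac{(m+1)(m^2-1)}{(m+2)(m^2+1)}$ for odd $m$. I expect the even case to exploit a natural $\operatorname{Sp}$-type pairing of the unitary frame into $m/2$ complex-plane pairs, which supplies the extra symmetry that shrinks $V$; the odd case must handle an unpaired direction, forcing extra terms and a weaker bound. Verifying that the rational constants emerge cleanly from the counting of test tensors, and that the ``bisectional remainder'' is indeed nonnegative in both parity cases, are the most delicate steps.
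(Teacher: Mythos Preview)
This theorem is not proved in the paper; it is quoted from \cite{NPWW22} as background. The paper instead proves the strictly stronger Theorem~\ref{thm flat}, with the parity-free and sharp threshold $\tfrac{3}{2}(m^2-1)$, and that proof is what you should compare against.

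Your decomposition $S^2_0(T_pM)=\mathcal{B}\oplus\mathcal{A}$ is exactly the paper's $E^-\oplus E^+$ (Lemmas~\ref{lemma basis +} and~\ref{lemma basis -}), and your identification of $\dim\mathcal{B}=m^2-1$, $\dim\mathcal{A}=m(m+1)$ and of the Bourguignon--Karcher eigenspaces is correct. Where your plan diverges is in the next step. You propose to hunt for an explicit subspace $V$ of dimension roughly $A$ whose trace is $-c(\Scal+\text{remainder})$, and you speculate that the parity-dependent constant comes from an $\operatorname{Sp}$-type pairing of the frame when $m$ is even. The paper bypasses all of this: it computes the exact traces $\tr(\mathring{R}|_{E^-})=-\tfrac{m-1}{2m}S$ and $\tr(\mathring{R}|_{E^+})=S$ (Lemmas~\ref{lemma R basis +} and~\ref{lemma R basis -}), then applies an elementary averaging inequality (Lemma~\ref{lemma average}) to the diagonal values on $E^+$. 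This yields the sharp constant $\tfrac{3}{2}(m^2-1)$ directly, with no parity split and no need to build $V$ by hand; moreover the equality analysis in Lemma~\ref{lemma average} forces all holomorphic sectional curvatures to coincide, giving constant holomorphic sectional curvature rather than merely flatness.

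So your outline is structurally sound and would likely reproduce the weaker NPWW bound once the combinatorics are carried out, but the speculative parity mechanism is not how the paper (or, ultimately, the sharp argument) works. If your goal is to match the paper, replace the ``carefully selected subset of $\mathcal{A}$'' step with the averaging lemma applied to the full diagonal of $\mathring{R}$ on $\mathcal{A}$; the parity distinction then disappears and you recover the stronger Theorem~\ref{thm flat}.
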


Theorem \ref{thm flat} improves Theorem \ref{thm NPWW}. Moreover, the number $\frac{3}{2}(m^2-1)$ is sharp as $(\CP^m,g_{FS})$ has $\frac{3}{2}(m^2-1)$-nonnegative $\mathring{R}$ and $(\CH^m, g_{\text{stand}})$, the complex hyperbolic space with constant negative holomorphic sectional curvature, has $\frac{3}{2}(m^2-1)$-nonpositive $\mathring{R}$. Theorem \ref{thm flat} has two immediate corollaries. 
\begin{corollary}\label{cor 1}
For $\a \leq \frac{3}{2}(m^2-1)$, there do not exist $m$-dimensional K\"ahler manifolds with $\a$-positive or $\a$-negative curvature operator of the second kind.
\end{corollary}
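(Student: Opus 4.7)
The plan is to derive the corollary directly from Theorem \ref{thm flat} by splitting into the two cases $\a < \frac{3}{2}(m^2-1)$ and $\a = \frac{3}{2}(m^2-1)$, and checking that the manifolds identified by Theorem \ref{thm flat} never actually realize strict $\a$-positivity or strict $\a$-negativity.

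First, I would observe that $\a$-positivity trivially implies $\a$-nonnegativity (and similarly $\a$-negativity implies $\a$-nonpositivity), so Theorem \ref{thm flat} is available in both cases. For $\a < \frac{3}{2}(m^2-1)$, part (1) of Theorem \ref{thm flat} forces $M$ to be flat; but then $\mathring{R}\equiv 0$, which cannot satisfy the strict inequality in the definition of $\a$-positive (respectively, $\a$-negative) curvature operator of the second kind. This handles the strict inequality case.

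For the critical value $\a = \frac{3}{2}(m^2-1)$, part (2) of Theorem \ref{thm flat} gives constant holomorphic sectional curvature, either nonnegative (in the $\a$-positive case) or nonpositive (in the $\a$-negative case). Since a K\"ahler manifold with constant holomorphic sectional curvature is locally isometric to $\CP^m$, $\C^m$, or $\CH^m$ depending on the sign, I would then need to check that each of these three model spaces fails strict $\a$-positivity (respectively, strict $\a$-negativity) at $\a = \frac{3}{2}(m^2-1)$. The flat case is immediate since $\mathring{R}\equiv 0$. For $(\CP^m,g_{FS})$ normalized to have holomorphic sectional curvature $4$, I would invoke the Bourguignon--Karcher eigenvalue computation mentioned in the introduction: the eigenvalues of $\mathring{R}$ are $-2$ with multiplicity $m^2-1$ and $+4$ with multiplicity $m(m+1)$. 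Taking the $\lfloor \a \rfloor = m^2-1$ smallest eigenvalues together with the fractional part $(\a-\lfloor\a\rfloor)=\frac{1}{2}(m^2-1) - \lfloor\frac{1}{2}(m^2-1)\rfloor$ of the next eigenvalue, a short arithmetic check shows
\begin{equation*}
    -2(m^2-1) + \tfrac{1}{2}(m^2-1)\cdot 4 = 0,
\end{equation*}
so the $\a$-sum equals $0$ rather than being strictly positive. The case of $\CH^m$ is symmetric (negate all eigenvalues), yielding the $\a$-sum equal to $0$ as well and ruling out strict $\a$-negativity. Scaling the metric only multiplies eigenvalues by a positive constant, so the conclusion is metric-invariant.

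The only mildly delicate point is confirming the arithmetic for the fractional part of $\a$ when $m^2-1$ is odd (e.g.\ when $m$ is even), but this amounts to writing $\a = (m^2-1) + \frac{1}{2}(m^2-1)$ and noting that the eigenvalue immediately above the $(m^2-1)$-th one is $+4$, so the evaluation above is unaffected by how one partitions integer and fractional parts. I do not foresee any substantial obstacle; the corollary is essentially a direct corollary of Theorem \ref{thm flat} combined with the classical Bourguignon--Karcher eigenvalue data for $\CP^m$ and its dual.
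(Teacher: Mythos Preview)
Your proposal is correct and follows essentially the same approach as the paper: reduce to constant holomorphic sectional curvature and then invoke the Bourguignon--Karcher eigenvalue data to see that the relevant $\a$-sum vanishes rather than being strictly positive (or negative). The paper's proof is slightly more economical in that it does not split into cases (since $\a$-positive for any $\a\leq\frac{3}{2}(m^2-1)$ already implies $\frac{3}{2}(m^2-1)$-nonnegative) and works pointwise via Proposition~\ref{prop flat}, writing $R=cR_{\CP^m}$ directly rather than passing through the local isometry classification of constant holomorphic sectional curvature manifolds; but these are cosmetic differences.
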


\begin{corollary}\label{cor 3}
Let $(M^m,g,J)$ be a complete non-flat K\"ahler manifold of complex dimension $m$. 
If $M$ has $\frac{3}{2}(m^2-1)$-nonnegative (respectively, $\frac{3}{2}(m^2-1)$-nonpositive) curvature operator of the second kind, then $M$ is isometric to $(\CP^m, g_{FS})$ (respectively, a quotient of $(\CH^m, g_{\text{stand}})$), up to scaling.
\end{corollary}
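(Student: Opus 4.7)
The plan is to combine Theorem \ref{thm flat}(2) with the classical classification of complete complex space forms. By Theorem \ref{thm flat}(2), the hypothesis forces $(M^m,g,J)$ to have constant holomorphic sectional curvature $c$, with $c \geq 0$ under the first assumption and $c \leq 0$ under the second. Since $M$ is assumed non-flat, $c \neq 0$, so we split into the cases $c > 0$ and $c < 0$.

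For the next step, I would invoke the Hawley--Igusa classification of complete simply connected Kähler manifolds with constant holomorphic sectional curvature: up to a homothety normalizing the curvature, such a manifold is isometric to $(\CP^m, g_{FS})$ when $c > 0$, to $\C^m$ when $c = 0$, and to $(\CH^m, g_{\text{stand}})$ when $c < 0$. Applying this to the universal cover $\widetilde M$ of $M$, one obtains a Kähler covering $\widetilde M \to M$ with deck group $\Gamma \subset \mathrm{Isom}(\widetilde M, \widetilde g)$ acting freely and properly discontinuously. In the case $c < 0$, we have $\widetilde M = \CH^m$ (up to scaling), so $M = \CH^m / \Gamma$ is a quotient of the complex hyperbolic space, which is exactly the conclusion claimed in that case.

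It remains to handle the case $c > 0$, where $\widetilde M = \CP^m$ and we must show $\Gamma$ is trivial so that $M$ is isometric to $\CP^m$ itself (not merely a quotient). For this, I would use the following elementary fixed-point argument: the isometry group of $(\CP^m, g_{FS})$ is $PU(m+1)$, and any element of $U(m+1)$ acting on $\C^{m+1}$ admits a unit eigenvector (since the characteristic polynomial has a root over $\C$); this eigenline descends to a fixed point in $\CP^m$. Consequently, no non-trivial subgroup of $\mathrm{Isom}(\CP^m, g_{FS})$ can act freely, so $\Gamma = \{e\}$ and $M \cong (\CP^m, g_{FS})$ up to scaling.

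There is no substantial obstacle in this argument: all of the work has been done in Theorem \ref{thm flat}(2), and the corollary is then a straightforward consequence of the classification of complex space forms together with the standard observation that $\CP^m$ admits no free isometric quotients. The only subtlety worth mentioning explicitly in the write-up is that \emph{a priori} one only concludes $M$ is a quotient of $\CP^m$ (or $\CH^m$), and in the positive curvature case one must still remove the quotient using the eigenvector argument above.
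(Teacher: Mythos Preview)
Your proposal is correct and follows essentially the same route as the paper: invoke Theorem~\ref{thm flat}(2) to get constant holomorphic sectional curvature, then appeal to the classification of complete complex space forms. The paper's proof is terser and does not spell out the fixed-point argument excluding nontrivial quotients of $\CP^m$, but your added detail there is a welcome clarification rather than a deviation.
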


In K\"ahler geometry, there are several positivity conditions on curvatures that characterize $\mathbb{CP}^m$ among closed K\"ahler manifolds up to biholomorphism. 
For instance, a closed K\"ahler manifold with positive bisectional curvature is biholomorphic to $\mathbb{CP}^m$. This was known as the Frankel conjecture \cite{Frankel61} and it was proved independently by Mori \cite{Mori79} and Siu and Yau \cite{SY80}. 
A weaker condition, called positive orthogonal bisectional curvature, also characterizes $\mathbb{CP}^m$. This is due to Chen \cite{Chen07} and Gu and Zhang \cite{GZ10} (see also Wilking \cite{Wilking13} for an alternative proof using Ricci flow).
Therefore, it is natural to seek a positivity condition on $\mathring{R}$ that characterizes $\mathbb{CP}^m$. Regarding this question, we prove that 
\begin{theorem}\label{thm OB +}
    A closed K\"ahler manifold of complex dimension $m\geq 2$ with $\a_m$-positive curvature operator of the second kind, where
     \begin{equation}\label{eq alpha m def}
     \a_m:=  \frac{3m^3-m+2}{2m},
    \end{equation}
    is biholomorphic to $\CP^m$.
\end{theorem}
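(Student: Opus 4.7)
The strategy reduces the theorem to a pointwise algebraic statement and then applies the classical characterization of $\CP^m$ by positive orthogonal bisectional curvature (Chen \cite{Chen07}, Gu--Zhang \cite{GZ10}, with an alternate Ricci-flow proof in Wilking \cite{Wilking13}). Concretely, it suffices to prove that at every point $p \in M$, $\a_m$-positive $\mathring R$ implies $R(X, \overline X, Y, \overline Y) > 0$ for every pair of unit $(1,0)$-vectors $X, Y \in T^{1,0}_p M$ with $g(X, \overline Y) = 0$. Once this pointwise implication is in hand, positivity of the orthogonal bisectional curvature everywhere follows, and the Chen/Gu--Zhang theorem finishes the proof.

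The pointwise implication will be proved by contrapositive. Suppose $\epsilon_1, \epsilon_2$ is a pair as above with $R(\epsilon_1, \overline{\epsilon_1}, \epsilon_2, \overline{\epsilon_2}) \leq 0$, and extend it to a unitary frame $\{\epsilon_\a\}_{\a = 1}^m$. I then construct $\lceil \a_m \rceil$ orthonormal traceless symmetric 2-tensors whose $\a_m$-weighted sum of $\mathring R$-values is $\leq 0$, contradicting $\a_m$-positivity. The construction is guided by the spectrum of $\mathring R$ on $(\CP^m, g_{FS})$: the first $m^2 - 1$ tensors are taken to be any orthonormal basis of the traceless $J$-invariant (Hermitian) subspace of $S^2_0(T_pM)$, which is exactly the $(-2)$-eigenspace on $(\CP^m, g_{FS})$ and plays the central role already in Theorem \ref{thm flat}. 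The remaining weight $\a_m - (m^2 - 1) = (m^3 + m + 2)/(2m)$ is distributed on a single unit $J$-anti-invariant tensor $\psi$ supported in the real 4-plane $\spann_\R\{\epsilon_1 + \overline{\epsilon_1}, J(\epsilon_1 + \overline{\epsilon_1}), \epsilon_2 + \overline{\epsilon_2}, J(\epsilon_2 + \overline{\epsilon_2})\}$, chosen so that $\mathring R(\psi, \psi)$ singles out the orthogonal bisectional curvature $R(\epsilon_1, \overline{\epsilon_1}, \epsilon_2, \overline{\epsilon_2})$ modulo controllable remainders involving holomorphic sectional curvatures.

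The main obstacle is the algebraic bookkeeping of the combined expression. By K\"ahler curvature identities and the first Bianchi identity, the Hermitian trace $\sum_{i=1}^{m^2 - 1} \mathring R(\varphi_i, \varphi_i)$ expresses as a universal linear combination of the scalar curvature, the Ricci tensor on the frame, and the holomorphic bisectional curvatures $R(\epsilon_\a, \overline{\epsilon_\a}, \epsilon_\b, \overline{\epsilon_\b})$ summed over all pairs $\a, \b$ (this is the same computation underlying Theorem \ref{thm flat}, whose threshold is $\tfrac{3}{2}(m^2-1)$). Adding the contribution from $\psi$ with the weight $\a_m - (m^2 - 1)$ must cause all non-target curvature terms to cancel so that $R(\epsilon_1, \overline{\epsilon_1}, \epsilon_2, \overline{\epsilon_2})$ is isolated with a strictly negative coefficient; the particular value $\a_m = (3m^3 - m + 2)/(2m) = \tfrac{3}{2}(m^2 - 1) + (m+1)/m$ is precisely the one that achieves this exact cancellation, with the extra $(m+1)/m$ accounting for singling out one orthogonal pair among the $\binom{m}{2}$ pairs in the Hermitian trace. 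The computation is in the same algebraic spirit as \cite{Li22JGA} and \cite{NPWW22}, and the fact that $\psi$ lies in the $J$-anti-invariant subspace guarantees its orthogonality to $\varphi_1, \dots, \varphi_{m^2 - 1}$ so that they together form a legitimate orthonormal sequence.
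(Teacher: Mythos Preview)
Your global strategy (reduce to a pointwise statement, then invoke Chen/Gu--Zhang/Wilking) matches the paper, and your choice of the first $m^2-1$ test tensors as an orthonormal basis of the traceless $J$-invariant subspace $E^-$ is exactly what the paper does. The gap is in the second step: you propose to place the entire remaining weight $\a_m-(m^2-1)=(m^3+m+2)/(2m)$ on a \emph{single} unit tensor $\psi$. This is not admissible. By the very definition of $\a$-positivity, each orthonormal test tensor carries weight at most $1$ (the last one carries $\a-\lfloor\a\rfloor\le 1$), and for every $m\ge 2$ one has $(m^3+m+2)/(2m)\ge 3$. So your test sequence does not witness a violation of $\a_m$-positivity. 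There is also a structural reason the single-$\psi$ idea cannot work: the sum over $E^-$ equals $-\tfrac{m-1}{2m}S$, and to isolate $R(\epsilon_1,\bar\epsilon_1,\epsilon_2,\bar\epsilon_2)$ you must manufacture a compensating $+\tfrac{m-1}{2m}S$ from the remaining tensors; no single tensor supported in the $4$-plane $\spn_\R\{e_1,Je_1,e_2,Je_2\}$ can produce a full scalar-curvature term, since $S$ involves all directions.

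What the paper actually does after the $E^-$ block is: (i) include the two explicit $J$-anti-invariant tensors $\vp^+_{12}=\tfrac12(e_1\odot e_2-Je_1\odot Je_2)$ and $\psi^+_{12}=\tfrac12(e_1\odot Je_2+Je_1\odot e_2)$, each with $\mathring R$-value $2R(e_1,Je_1,e_2,Je_2)$; and (ii) distribute the still-remaining weight over the rest of $E^+$ via an elementary averaging lemma (the paper's Lemma~\ref{lemma average}), namely weight $m-1$ over the $m$ holomorphic-sectional tensors $\theta_i$ and weight $\tfrac{(m-2)(m^2-1)}{2m}$ over the $2\binom{m}{2}-2$ remaining orthogonal-bisectional tensors $\vp^+_{ij},\psi^+_{ij}$ with $(i,j)\neq(1,2)$. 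Bounding each of these partial sums by the corresponding average recovers exactly $+\tfrac{m-1}{2m}S-\tfrac{2(m-1)}{m}R(e_1,Je_1,e_2,Je_2)$, so the total collapses to $\tfrac{2(m+1)}{m}R(e_1,Je_1,e_2,Je_2)$. In short, the missing ingredient in your plan is this averaging step over the full $E^+$ basis; without it you can neither respect the weight-$\le 1$ constraint nor cancel the scalar curvature.
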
 

Note that when $m=2$, $\a_2=6$ is the best constant for Theorem \ref{thm OB +} to hold, as $\CP^1 \times \CP^1$ has $(6+\epsilon)$-positive $\mathring{R}$ for $\epsilon >0$.
For K\"ahler surfaces, the author proved in \cite{Li22PAMS} that a closed K\"ahler surface with six-positive $\mathring{R}$ is biholomorphic to $\mathbb{CP}^2$, and a closed nonflat K\"ahler surface with six-nonnegative $\mathring{R}$ is either biholomorphic to $\mathbb{CP}^2$ or isometric to $\CP^1 \times \CP^1$, up to scaling. The two different proofs in \cite{Li22PAMS} only work for complex dimension two.

The number $\a_m$, however, does not seem to be optimal for $m\geq 3$. An ambitious question is 
\begin{question}
What is the largest number $C_m$ so that a closed $m$-dimensional K\"ahler manifold with $C_m$-positive curvature operator of the second kind is biholomorphic to $\mathbb{CP}^m$?   
\end{question}

Theorem \ref{thm OB +} implies $\a_m \leq C_m$. We point out that 
\begin{equation}\label{eq beta m def}
    C_m \leq \b_m:=\frac{3m^3+2m^2-3m-2}{2m},
\end{equation}
as the product manifold $\CP^{m-1}\times \CP^1$ has $\a$-positive $\mathring{R}$ for any $\a > \b_m$ (see \cite{Li22product}). 
This determines the leading term of $C_m$ to be $\frac{3}{2}m^2$. 
In particular, we have
\begin{equation*}
    \frac{40}{3} \leq C_3 \leq \frac{44}{3}. 
\end{equation*}
It remains an interesting question to determine $C_m$ for $m\geq 3$.  

%In the Riemannian setting, the author conjectured that a closed $n$-dimensional Riemannian manifold with $(n+\frac{n-2}{n})$-positive $\mathring{R}$ is diffeomorphic to a spherical space form after he proved it for $n=3$ and $n=4$ in \cite{Li22JGA}. 
%The borderline example is suspected to be $\S^{n-1}\times \S^1$, whose $\mathring{R}$ is $\a$-positive for any $\a > n+\frac{n-2}{n}$. 
%In the K\"ahler case, one may naturally speculate that the borderline example might be $\CP^{m-1}\times \CP^1$, and one might have $C_m=\b_m$. 
%Our next result provides some evidence for this speculation. 

In the next result, we show that $\b_m$-positivity of $\mathring{R}$ implies positive orthogonal Ricci curvature. 
\begin{theorem}\label{thm Ric perp}
Let $(M^m,g,J)$ be a K\"ahler manifold of complex dimension $m\geq 2$. 
If $M$ has $\b_m$-positive curvature operator of the second kind with $\b_m$ defined in \eqref{eq beta m def}, then $M$ has positive orthogonal Ricci curvature, namely 
\begin{equation*}
    \Ric(X,X)-R(X,JX,X,JX)/|X|^2 >0
\end{equation*}
for any $0 \neq X\in T_pM$ and any $p\in M$. 
If $M$ is further assumed to be closed, then $M$ has $h^{p,0}=0$ for any $1\leq p \leq m$, and in particular, $M$ is simply-connected and projective. 
\end{theorem}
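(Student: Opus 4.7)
The proof splits into (A) a pointwise algebraic step deriving $\Ric^\perp(X,X) > 0$ from the $\beta_m$-positivity of $\mathring R$, and (B) a global Bochner--Kodaira step deducing the cohomological and topological consequences when $M$ is closed.

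For (A), fix $p\in M$ and a unit $X\in T_pM$, and choose a real orthonormal basis $\{X,JX,v_2,Jv_2,\ldots,v_m,Jv_m\}$ compatible with $J$, so that
\begin{equation*}
\Ric^\perp(X,X)=\sum_{\alpha=2}^m\bigl[R(X,v_\alpha,v_\alpha,X)+R(X,Jv_\alpha,Jv_\alpha,X)\bigr].
\end{equation*}
I aim to produce an orthonormal family $\{\varphi_i\}\subset S^2_0(T_pM)$ and coefficients $c_i\in[0,1]$ with $\sum_i c_i=\beta_m$ such that
\begin{equation*}
\sum_i c_i\,\mathring R(\varphi_i,\varphi_i)=C\,\Ric^\perp(X,X),\qquad C>0.
\end{equation*}
Since $\beta_m$-positivity of $\mathring R$ is equivalent, via the min-max characterization of the bottom-$\alpha$ eigenvalue sum, to the assertion that every such weighted sum is strictly positive, the inequality $\Ric^\perp(X,X)>0$ then follows.

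The family is built from three types of tensors. First, the $4(m-1)$ ``cross'' tensors $\tfrac{1}{\sqrt 2}(Y\otimes Z+Z\otimes Y)$ with $Y\in\{X,JX\}$, $Z\in\{v_\alpha,Jv_\alpha\}$, $\alpha\ge 2$, whose $\mathring R$-values, paired via the K\"ahler identity $R(JY,JZ,W,U)=R(Y,Z,W,U)$, sum to $2\Ric^\perp(X,X)$ and contribute total weight $4(m-1)$. Second, the $m$ ``holomorphic'' tensors $\tfrac{1}{\sqrt 2}(w\otimes Jw+Jw\otimes w)$ for $w\in\{X,v_2,\ldots,v_m\}$, each contributing the holomorphic sectional curvature $H(w)$. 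Third, the ``Hermitian traceless'' tensor
\begin{equation*}
\chi_m=\tfrac{1}{\sqrt{2m(m-1)}}\left[(m-1)(X\otimes X+JX\otimes JX)-\sum_{\alpha=2}^m(v_\alpha\otimes v_\alpha+Jv_\alpha\otimes Jv_\alpha)\right],
\end{equation*}
along with additional $U(m)$-adapted Hermitian traceless tensors when $m\ge 3$. In the case $m=2$ a direct calculation yields $\mathring R(\chi_2,\chi_2)=\Ric^\perp(X,X)-\tfrac{1}{2}(H(X)+H(v_2))$, and weighting $\chi_2$ by $1$ together with the two holomorphic tensors by $\tfrac12$ each cancels the holomorphic sectional curvature contributions, producing a total weighted sum of $3\Ric^\perp(X,X)$ with total weight $4+1+1=6=\beta_2$. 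For general $m$, one further adjoins the $U(m)$-adapted Hermitian traceless tensors spanning the Hermitian traceless component of $S^2_0(T_pM)$ together with ``horizontal'' bisectional cross tensors on $\spann\{v_\alpha,Jv_\alpha\}_{\alpha\ge 2}$, and chooses weights in $[0,1]$ so that all non-$\Ric^\perp$ contributions telescope to zero and the total weight equals precisely $\beta_m$.

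\textbf{Main obstacle.} The crucial step is the combinatorial algebra for $m\ge 3$: computing $\mathring R$ of all the Hermitian traceless and horizontal bisectional tensors, and solving for weights in $[0,1]$ that cancel all non-$\Ric^\perp$ contributions while summing to exactly $\beta_m$. The tightness of this accounting reflects the extremal role of the product $\CP^{m-1}\times\CP^1$, on which the optimal sum of bottom $\beta_m$ eigenvalues of $\mathring R$ vanishes; any scheme recovering $\Ric^\perp$ as a weighted sum of $\mathring R$-values must saturate this bound.

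\textbf{Global consequences.} With $\Ric^\perp>0$ established pointwise, the assertions for closed $M$ are standard. For a holomorphic $(p,0)$-form $\omega$ on $M$, the Bochner formula provides a pointwise curvature term dominating a positive multiple of $\Ric^\perp\,|\omega|^2$ (the content of the orthogonal-Ricci vanishing theorems of Ni and collaborators), forcing $\omega\equiv 0$ after integration. Hence $h^{p,0}=0$ for $1\leq p\leq m$. Simple-connectedness then follows from the vanishing $h^{1,0}=0$ together with the finiteness of $\pi_1(M)$ under positive orthogonal Ricci curvature, and projectivity follows from $h^{2,0}=0$ via Kodaira's embedding theorem.
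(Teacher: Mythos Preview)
Your strategy matches the paper's: both use $\CP^{m-1}\times\CP^1$ as the model, split $T_pM=\operatorname{span}\{X,JX\}\oplus V_1$, and build an orthonormal family whose weighted $\mathring R$-sum collapses to a multiple of $\Ric^\perp(X,X)$. Your cross tensors are the paper's $h_i$, your $\chi_m$ is the paper's $\zeta$, your min--max reformulation of $\alpha$-positivity (weights $c_i\in[0,1]$ summing to $\alpha$) is correct, and your $m=2$ computation is right. Part (B) is handled in the paper by a direct citation of Ni's vanishing theorem for $\Ric^\perp>0$, essentially as you outline.

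The gap is precisely the one you flag: the $m\geq 3$ bookkeeping is not carried out, and your tensor list is too vague there to see how it closes. The paper resolves your ``main obstacle'' with two ingredients you do not yet have. First, it includes with weight $1$ the \emph{entire} $J$-invariant traceless block $E^-\subset S^2_0(V_1)$ (of dimension $(m-1)^2-1$; this is your ``Hermitian traceless component'' restricted to $V_1$) and computes its total $\mathring R$-contribution in one stroke from a trace identity, obtaining $-\tfrac{m-2}{m-1}\sum_{\alpha,\beta\ge 2}R(v_\alpha,Jv_\alpha,v_\beta,Jv_\beta)$, rather than tensor by tensor. Second --- and this is the key device --- on the complementary $J$-anti-invariant block $E^+\subset S^2_0(V_1)$ together with the two tensors built from $\{X,JX\}$ alone (a collection of size $m^2-m+2$, which properly contains your ``holomorphic'' tensors but also the tensors $\tfrac{1}{2\sqrt 2}(v_\alpha\odot v_\alpha-Jv_\alpha\odot Jv_\alpha)$ and the $\varphi^+_{\alpha\beta},\psi^+_{\alpha\beta}$ that you do not list), the paper does \emph{not} solve for individual weights. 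It simply puts the uniform weight $\tfrac{m-1}{2m}$ on every element, invoking the elementary averaging inequality that the bottom-$x$ partial sum of any finite collection is at most $x$ times its average. The resulting expression depends only on $\sum_{\alpha,\beta\ge 2}R(v_\alpha,Jv_\alpha,v_\beta,Jv_\beta)$ and $R(X,JX,X,JX)$, and these cancel against the $E^-$ and $\zeta$ contributions to leave $\tfrac{2(m+1)}{m}\Ric^\perp(X,X)$. One checks that the total weight is $4(m-1)+1+[(m-1)^2-1]+\tfrac{m-1}{2m}(m^2-m+2)=\beta_m$. So your framework is completable, but the missing idea is uniform weighting via averaging, not a term-by-term telescoping.
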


The orthogonal Ricci curvature $\Ric^\perp$ is defined as 
\begin{equation*}
    \Ric^\perp(X,X)=\Ric(X,X)-R(X,JX,X,JX)/|X|^2
\end{equation*}
for $0 \neq X\in T_pM$. This notion of curvature 
was introduced by Ni and Zheng \cite{NZ18} in the study of Laplace comparison theorems on K\"ahler manifolds. We refer the reader to \cite{NZ19}, \cite{NWZ21}, and \cite{Ni21} for a more detailed account of it. 
The constant $\b_m$ in Theorem \ref{thm Ric perp} is optimal, as $\CP^{m-1}\times \CP^1$ has $\b_m$-nonnegative $\mathring{R}$ and it has nonnegative (but not positive) orthogonal Ricci curvature.

In addition, we prove a result similar to Theorem \ref{thm Ric perp}, which states that
\begin{theorem}\label{thm Ricci}
Let $(M^m,g,J)$ be a K\"ahler manifold of complex dimension $m\geq 2$. 
Suppose $M$ has $\g_m$-positive curvature operator of the second kind, where
\begin{equation}\label{eq gamma m def}
    \g_m:= \frac{3m^2+2m-1}{2}.
\end{equation}
Then for any $p\in M$ and $0 \neq X\in T_pM$, it holds that
\begin{equation}\label{eq mixed curvature}
    2\Ric(X,X)-R(X,JX,X,JX)/|X|^2 >0.
\end{equation}
If $M$ is further assumed to be closed, then $M$ has $h^{p,0}=0$ for any $1\leq p \leq m$, and in particular, $M$ is simply-connected and projective. 
\end{theorem}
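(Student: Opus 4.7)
The plan is to follow the pointwise, algebraic strategy used throughout the paper: given any unit vector $X\in T_pM$, I construct an orthonormal family in $S^2_0(T_pM)$ of fractional cardinality $\gamma_m$ whose total $\mathring{R}$-contribution is bounded above by $2\Ric(X,X)-R(X,JX,X,JX)$. Combined with $\gamma_m$-positivity of $\mathring{R}$, this forces $2\Ric(X,X)-R(X,JX,X,JX)>0$, giving \eqref{eq mixed curvature}.

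I extend $\{X,JX\}$ to a $J$-adapted orthonormal basis $\{e_1=X,\,e_2=JX,\,e_3,\,e_4=Je_3,\ldots,e_{2m-1},\,e_{2m}=Je_{2m-1}\}$ of $T_pM$ and consider the $4m-3$ off-diagonal tensors
\[
\psi_i=\tfrac{1}{\sqrt{2}}(X\otimes e_i+e_i\otimes X)\quad(i=2,\ldots,2m),\qquad
\eta_j=\tfrac{1}{\sqrt{2}}(JX\otimes e_j+e_j\otimes JX)\quad(j=3,\ldots,2m).
\]
These are orthonormal in $S^2_0(T_pM)$, and the standard identity $\mathring{R}(\tfrac{1}{\sqrt{2}}(A\otimes B+B\otimes A),\tfrac{1}{\sqrt{2}}(A\otimes B+B\otimes A))=R(A,B,A,B)$ for orthonormal $A\perp B$ gives $\mathring{R}(\psi_i,\psi_i)=R(X,e_i,X,e_i)$ and $\mathring{R}(\eta_j,\eta_j)=R(JX,e_j,JX,e_j)$. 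Summing and invoking the K\"ahler identity $\Ric(JX,JX)=\Ric(X,X)$ yields
\[
\sum_{i=2}^{2m}\mathring{R}(\psi_i,\psi_i)+\sum_{j=3}^{2m}\mathring{R}(\eta_j,\eta_j)=2\Ric(X,X)-R(X,JX,X,JX).
\]

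The deficit relative to $\gamma_m=\tfrac{3m^2+2m-1}{2}$ is $\gamma_m-(4m-3)=\tfrac{3m^2-6m+5}{2}$, so the crux is to enlarge the family by (fractionally) that many orthonormal tensors drawn from the orthogonal complement $W$ of $\spann\{\psi_2,\ldots,\psi_{2m},\eta_3,\ldots,\eta_{2m}\}$ in $S^2_0(T_pM)$, with non-positive total $\mathring{R}$-contribution. I would use the $J$-equivariant decomposition $S^2_0(T_pM)=S^2_{0,H}\oplus S^2_{AH}$ into the Hermitian-traceless and anti-Hermitian parts of real dimensions $m^2-1$ and $m(m+1)$: a direct check shows that our first batch occupies a $2(m-1)$-dimensional Hermitian piece and a $(2m-1)$-dimensional anti-Hermitian piece, leaving complements of dimensions $(m-1)^2$ and $m^2-m+1$, respectively. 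The additional tensors are drawn from these complements; natural candidates are the Hermitian-traceless $2$-tensors supported on $\spann\{e_3,\ldots,e_{2m}\}$, together with an anti-Hermitian diagonal tensor proportional to $X\otimes X+JX\otimes JX-\tfrac{1}{m-1}\sum_{i=3}^{2m}e_i\otimes e_i$ (suitably normalized). Using the K\"ahler curvature identity $R(X,Y,X,Y)+R(X,JY,X,JY)=R(X,JX,Y,JY)$, their $\mathring{R}$-values collapse into combinations of $H(X)$ and bisectional curvatures involving $X$ and $JX$ that can be arranged so that their total is $\leq 0$.

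The main obstacle is this second step: performing the algebraic bookkeeping so that the extra $\tfrac{3m^2-6m+5}{2}$ tensors really lie in $W$ and have non-positive total $\mathring{R}$-contribution. The coefficient decomposes as $\gamma_m=\tfrac{3(m^2-1)}{2}+(m+1)$, with $\tfrac{3(m^2-1)}{2}$ matching the threshold in Theorem~\ref{thm flat} (so that the Hermitian-traceless contribution ``pays'' for that piece) and the extra $(m+1)$ providing room beyond the $\Ric-H$ estimate in Theorem~\ref{thm Ric perp} to reach $2\Ric-H$. Once the pointwise inequality $2\Ric(X,X)-R(X,JX,X,JX)/|X|^2>0$ is secured, the vanishing $h^{p,0}=0$ on closed $M$ follows by a standard Bochner-type argument for holomorphic $p$-forms with $2\Ric-H$ playing the role of the positive Weitzenb\"ock curvature term (cf.~\cite{NZ18,NZ19,NWZ21,Ni21}); simple-connectedness and projectivity then follow by Hodge theory.
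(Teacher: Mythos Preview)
Your first batch of $4m-3$ tensors is correct and, after the paper's dimension shift in Proposition~\ref{prop mixed curvature}, coincides with $\{h_i\}\cup\{\tau_2\}$; its total $\mathring{R}$-value is indeed $2\Ric(X,X)-R(X,JX,X,JX)$.

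The genuine gap is the second step, and it is not just an unfinished calculation: the framing is wrong. You seek $\tfrac{3m^2-6m+5}{2}$ additional orthonormal tensors in $W$ with \emph{non-positive} total $\mathring{R}$-contribution so that the grand total stays below the target. This cannot be arranged in general. In the paper's argument the extras have total bounded above only by $\tfrac{1}{m}\bigl(2\Ric(X,X)-R(X,JX,X,JX)\bigr)$, a \emph{positive} multiple of the target; the grand total is then at most $\tfrac{m+1}{m}$ times the target, and $\gamma_m$-positivity of that grand total forces the target itself to be positive. Concretely (translated back to complex dimension $m$), the extras are: the full $\mathcal{E}^-$ basis on $V_1=\spn\{e_3,\dots,e_{2m}\}$, the tensor $\zeta$ (your diagonal candidate, which is $J$-invariant and hence \emph{Hermitian}, not anti-Hermitian as you wrote), the tensor $\tau_1=\tfrac{1}{2\sqrt{2}}(X\odot X-JX\odot JX)$, \emph{and} a fractional $\tfrac{(m-1)^2}{2}$ portion of the $\mathcal{E}^+$ basis on $V_1$, bounded from above via the averaging Lemma~\ref{lemma average}. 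It is this last anti-Hermitian block---entirely absent from your sketch---whose averaged contribution exactly cancels the scalar-curvature-type term $\sum_{i,j}R(e_i,Je_i,e_j,Je_j)$ produced by $\mathcal{E}^-$ and $\zeta$. Without it that term is uncontrolled and your proposed extras cannot be shown to have non-positive (or even target-bounded) total. The underlying reason is that the correct model space here is $\CP^{m-1}\times\C$, not $\CP^m$, and the $\mathcal{E}^+$ piece on $V_1$ is precisely what encodes the $\CP^{m-1}$ factor.
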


Chu, Lee, and Tam \cite{CLT20} introduced a family of curvature conditions for K\"ahler manifolds called mixed curvature. They are defined as
$$\mathcal{C}_{a,b}(X):=a \Ric(X,\bar{X})+b R(X,\bar{X},X,\bar{X})/|X|^2$$ for $a,b\in \R$. Theorem \ref{thm Ricci} establishes a connection between $\mathring{R}$ and the mixed curvature condition $\mathcal{C}_{2,-1}$.

Finally, let's discuss our strategies to prove the above-mentioned results. We will work pointwise and establish relationships between the curvature operator of the second kind and other frequently used curvature notions in K\"ahler geometry, such as holomorphic sectional curvature, orthogonal bisectional curvature, and orthogonal Ricci curvature. 
Theorems \ref{thm flat}, \ref{thm OB +}, \ref{thm Ric perp}, and \ref{thm Ricci} follow immediately from parts (1), (2), (3), and (4) of the following theorem, respectively.
\begin{theorem}\label{thm algebra R}
    Let $(V,g,J)$ be a complex Euclidean vector space with complex dimension $m\geq 2$ and $R$ be a K\"ahler algebraic curvature operator on $V$ (see Definition \ref{def Kahler algebraic curvature operator}). Then the following statements hold:
    \begin{enumerate}
    \item If $R$ has $\frac{3}{2}(m^2-1)$-nonnegative (respectively, $\frac{3}{2}(m^2-1)$-nonpositive) curvature operator of the second kind, then $R$ has constant nonnegative (respectively, nonpositive) holomorphic sectional curvature. 
    \item If $R$ has $\a_m$-nonnegative (respectively, $\a_m$-positive, $\a_m$-nonpositive, $\a_m$-negative) curvature operator of the second kind with $\a_m$ defined in \eqref{eq alpha m def}, then $R$ has nonnegative (respectively, positive, nonpositive, negative) orthogonal bisectional curvature and nonnegative (respectively, positive, nonpositive, negative) holomorphic sectional curvature. 
    \item If $R$ has $\b_m$-nonnegative (respectively, $\b_m$-positive, $\b_m$-nonpositive, $\b_m$-negative) curvature operator of the second kind with $\b_m$ defined in \eqref{eq beta m def}, then $R$ has nonnegative (respectively, positive, nonpositive, negative) orthogonal Ricci curvature.
   \item If $R$ has $\g_m$-nonnegative (respectively, $\g_m$-positive, $\g_m$-nonpositive, $\g_m$-negative) curvature operator of the second kind with $\g_m$ defined in \eqref{eq gamma m def}, then the expression 
   \begin{equation*}
     2\Ric(X,X)-R(X,JX,X,JX)/|X|^2
  \end{equation*}
   is nonnegative (respectively, positive, nonpositive, negative) for any $0 \neq X \in V$. 
    \end{enumerate}
\end{theorem}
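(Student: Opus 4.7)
The plan is to prove all four parts by a common pointwise strategy: for each target curvature quantity $Q$ (respectively $H(X)$, the orthogonal bisectional curvature at $(X,Y)$, $\Ric^\perp(X)$, and $2\Ric(X,X)-R(X,JX,X,JX)$), I would construct a specific orthonormal family $\{\vp_1,\dots,\vp_k\}\subset S^2_0(V)$ for which $\sum_i\mathring R(\vp_i,\vp_i)$ equals a positive multiple of $Q$ modulo a separately nonnegative remainder. Feeding such a family, after extending it to an orthonormal basis of $S^2_0(V)$, into the $\a$-nonnegativity hypothesis then yields the desired sign on $Q$, and the sharp threshold $\a$ emerges from the dimension count of the family.

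The organizing tool is the $J$-splitting
\[
S^2_0(V)=S^2_+(V)\oplus S^2_-(V),
\]
where $S^2_+(V)$ consists of traceless symmetric endomorphisms commuting with $J$ (real dimension $m^2-1$) and $S^2_-(V)$ consists of symmetric endomorphisms anticommuting with $J$ (real dimension $m(m+1)$, automatically traceless). The K\"ahler identities make $\ov R$ respect this splitting; in a $J$-adapted orthonormal frame, $\mathring R|_{S^2_+}$ records holomorphic sectional and antibisectional curvatures while $\mathring R|_{S^2_-}$ records bisectional curvatures with the opposite sign, reproducing the spectrum $\{-2\}^{m^2-1}\cup\{4\}^{m(m+1)}$ on $(\CP^m,g_{FS})$. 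The critical threshold in part (1), $\tfrac{3}{2}(m^2-1)=\dim S^2_+(V)+\tfrac12\dim S^2_+(V)$, already reflects this structure, and the constants $\a_m$, $\b_m$, $\g_m$ in parts (2)--(4) arise as $\tfrac{3}{2}(m^2-1)$ plus an explicit correction equal to the number of additional $S^2_-$ directions needed to encode orthogonal bisectional, orthogonal Ricci, or mixed Ricci data.

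Concretely, for part (1) I would take a full orthonormal basis of $S^2_+(V)$ and pair it with $\tfrac12(m^2-1)$ carefully chosen elements of $S^2_-(V)$ tuned so that the bisectional cross-contributions cancel and only a weighted sum of holomorphic sectional curvatures survives; then letting $X$ vary upgrades the resulting averaged inequality to pointwise constancy of $H$. For parts (2)--(4) I would use analogous families built from the $S^2_-$ tensors naturally associated to the pair $(X,Y)$, such as $X\otimes X-JX\otimes JX$, $Y\otimes Y-JY\otimes JY$, and the symmetrized pairings $X\otimes Y\pm JX\otimes JY$, possibly averaged over an orthonormal frame of the Hermitian orthogonal complement of $X$ so as to convert bisectional contributions into Ricci contributions. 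The strict-sign versions follow by tracking the same inequality, and the rigidity needed for Corollary \ref{cor 3} reduces to the standard observation that a K\"ahler algebraic curvature operator of constant holomorphic sectional curvature is a scalar multiple of the Fubini--Study curvature tensor.

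The main obstacle is the precise algebraic identification of $\sum_i\mathring R(\vp_i,\vp_i)$ with the target quantity: the bisectional cross-terms from $S^2_+$ and $S^2_-$ carry opposite signs and must be made to cancel exactly using the first K\"ahler Bianchi identity, while the dimension count must match the conjectured $\a$ on the nose rather than with slack. This bookkeeping is what forces the unusual numerical constants in the statements and what makes the bounds sharp on $\CP^m$ for part (1) and on $\CP^{m-1}\times\CP^1$ for part (3); any slack in the cancellation would weaken the threshold.
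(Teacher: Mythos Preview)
Your high-level strategy---use the $J$-invariant splitting $S^2_0(V)=S^2_+\oplus S^2_-$, compute $\mathring R$ on explicit basis elements, and feed a chosen family into the $\a$-nonnegativity hypothesis---matches the paper's. But there is a genuine gap in how you propose to extract the target inequality. You say you will ``tune'' the $S^2_-$ elements so that bisectional cross-terms cancel exactly and $\sum_i\mathring R(\vp_i,\vp_i)$ becomes a pure multiple of the target $Q$. The paper does \emph{not} achieve exact cancellation, and it is not clear one can: on the anticommuting subspace (your $S^2_-$, the paper's $E^+$) the diagonal values of $\mathring R$ are $2R(e_i,Je_i,e_j,Je_j)$ and $R(e_i,Je_i,e_i,Je_i)$ with \emph{positive} coefficients (your sign claim on $S^2_-$ is reversed), while the sum over all of $E^-$ is exactly $-\tfrac{m-1}{2m}S$. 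The paper's device is an elementary averaging lemma: for any collection $A$ of $N$ real numbers with mean $\bar a$, the sum of the smallest $x$ of them (with the fractional part interpreted as in the definition of $\a$-nonnegativity) is at most $x\bar a$, with equality iff all elements of $A$ coincide. Applying this to the $E^+$ diagonal values---whose mean is a fixed multiple of $S$---and combining with the exact $E^-$ sum is what produces each inequality, and it simultaneously handles the non-integer values of $\a_m,\b_m,\g_m$ without any delicate Bianchi cancellation.

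This matters most in part (1). You describe obtaining ``a weighted sum of holomorphic sectional curvatures'' and then upgrading by varying $X$, but an inequality of that shape does not by itself force constancy of $H$. In the paper the constancy comes from the \emph{rigidity} of the averaging lemma: $\tfrac{3}{2}(m^2-1)$-nonnegativity forces $f\bigl(A,\tfrac{1}{2}(m^2-1)\bigr)\geq \tfrac{m-1}{2m}S$, while averaging gives the reverse, so all values in $A$---including every $R(e_i,Je_i,e_i,Je_i)$---must equal $\tfrac{S}{m(m+1)}$. Without this equality-forcing step your argument for (1) does not close. For (3) and (4) the paper also enlarges the test family beyond the $\CP^m$-eigenbasis by adding product-type tensors $h_i=\tfrac{1}{\sqrt2}e_0\odot e_i$, $\tau_1,\tau_2$, and a trace-balancing $\zeta$, modeled on $\CP^{m-1}\times\CP^1$ and $\CP^{m-1}\times\C$; this is the concrete implementation of your ``averaged over the Hermitian orthogonal complement of $X$'' idea, and it is what converts bisectional terms into $\Ric^\perp(e_0,e_0)$.
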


The strategy to prove a statement in Theorem \ref{thm algebra R} is to choose a model space and apply $\mathring{R}$ to the eigenvectors of the curvature operator of the second kind on this model space. A good model space leads to a sharp result.
The model spaces we use for parts (1), (3), and (4) of Theorem \ref{thm algebra R} are $\CP^m$, $\CP^{m-1}\times \CP^1$, and $\CP^{m-1}\times \C$, respectively. 
For part (2) of Theorem \ref{thm algebra R}, we use $\CP^m$ as the model space, but the result does not seem to be sharp for $m\geq 3$. 
Finally, it's worth mentioning that this strategy has been used successfully by the author in several works \cite{Li21, Li22JGA, Li22PAMS, Li22product} with $\CP^2$, $\CP^1 \times \CP^1$, $\S^{n-1}\times \S^1$, $\S^{k}\times \S^{n-k}$ and $\CP^k \times \CP^{m-k}$ as model spaces. 

We emphasize that our approach is pointwise; therefore, many of our results are of pointwise nature and the completeness of the metric is not needed. Another feature is that our proofs are purely algebraic and work equally well for nonpositivity conditions on $\mathring{R}$.

This article is organized as follows. 
Section 2 consists of three subsections. We fix some notation and conventions in subsection 2.1 and give an introduction to the curvature operator of the second kind in subsection 2.2. In subsection 2.3, we review some basics about K\"ahler algebraic curvature operators. 
In Section 3, we collect some identities that will be frequently used in this paper. 
In Section 4, we construct an orthonormal basis of the space of traceless symmetric two-tensors on a complex Euclidean vector space and calculate the diagonal elements of the matrix representing $\mathring{R}$ with respect to this basis.  
The proofs of Theorems \ref{thm flat}, \ref{thm OB +}, \ref{thm Ric perp}, and \ref{thm Ricci} are given in Sections 5, 6, 7, and 8, respectively.

\section{Preliminaries}
\subsection{Notation and Conventions}

In the following, $(V,g)$ is a real Euclidean vector space of dimension $n \geq 2$ and $\{e_i\}_{i=1}^n$ is an orthonormal basis of $V$. We always identify $V$ with its dual space $V^*$ via the metric. 
\begin{itemize}
    \item $S^2(V)$ and $\Lambda^2(V)$ denote the space of symmetric two-tensors on $V$ and two-forms on $V$, respectively. 
    \item $S^2_0(V)$ denotes the space of traceless symmetric two-tensors on $V$. Note that $S^2(V)$ splits into $O(V)$-irreducible subspaces as
    \begin{equation*}
        S^2(V)=S^2_0(V)\oplus \R g. 
    \end{equation*}
    \item $S^2(\Lambda^2 V)$, the space of symmetric two-tensors on $\Lambda^2(V)$, has the orthogonal decomposition 
    \begin{equation*}
        S^2(\Lambda^2 V) =S^2_B(\Lambda^2 V) \oplus \Lambda^4 V,
    \end{equation*}
    where $S^2_B(\Lambda^2 V)$ consists of all tensors $R\in S^2(\Lambda^2(V))$ that also satisfy the first Bianchi identity. The space $S^2_B(\Lambda^2(V))$ is called the space of algebraic curvature operators (or tensors) on $V$.
    \item The tensor product is defined via
    \begin{equation*}
        (e_i\otimes e_j)(e_k,e_l)=\delta_{ik}\delta_{jl}. 
    \end{equation*}
    \item $\odot$ denotes the symmetric product defined by 
    \begin{equation*}
        u \odot v=u\otimes v +v \otimes u. 
    \end{equation*}
    \item $\wedge$ denotes the wedge product defined by 
    \begin{equation*}
        u \wedge v=u\otimes v - v \otimes u. 
    \end{equation*}
    \item The inner product on $S^2(V)$ is given by 
    \begin{equation*}
        \langle A, B \rangle =\tr(A^T B). 
    \end{equation*}
    If $\{e_i\}_{i=1}^n$ is an orthonormal basis of $V$, then $\{\frac{1}{\sqrt{2}}e_i \odot e_j\}_{1\leq i<j\leq n} \cup \{\frac{1}{2}e_i \odot e_i\}_{1\leq i\leq n}$ is an orthonormal basis of $S^2(V)$. 
    \item The inner product on $\Lambda^2(V)$ is given by 
    \begin{equation*}
        \langle A, B \rangle =\frac{1}{2}\tr(A^T B). 
    \end{equation*}
    If $\{e_i\}_{i=1}^n$ is an orthonormal basis of $V$, then $\{e_i \wedge e_j\}_{1\leq i<j\leq n}$ is an orthonormal basis of $\Lambda^2(V)$. 
  \end{itemize}

\subsection{The Curvature Operator of the Second Kind}
Given $R\in S^2_B(\Lambda^2(V))$, the induced self-adjoint operator $\hat{R}:\Lambda^2 (V) \to \Lambda^2(V)$ given by 
    \begin{equation*}
        \hat{R}(\omega)_{ij}=\frac{1}{2}\sum_{k,l=1}^n R_{ijkl}\omega_{kl}, 
    \end{equation*}
is called the curvature operator (or the curvature operator of the first kind by Nishikawa \cite{Nishikawa86}).
The most famous result concerning $\hat{R}$ is perhaps the differentiable sphere theorem stating that a closed Riemannian manifold with two-positive curvature operator is diffeomorphic to a spherical space form. This is due to Hamilton \cite{Hamilton82} in dimension three, Hamilton \cite{Hamilton86} and Chen \cite{Chen91} in dimension four, and B\"ohm and Wilking \cite{BW08} in all higher dimensions. Rigidity results for closed manifolds with two-nonnegative curvature operator are obtained by \cite{Hamilton86} in dimension three, Hamilton \cite{Hamilton86} and Chen \cite{Chen91} in dimension four, and Ni and Wu \cite{NW07} in all higher dimensions. For other important results regarding $\hat{R}$, see for example \cite{Meyer71}, \cite{GM75}, \cite{Tachibana74}, \cite{PW21} and the references therein. 

By the symmetries of $R\in S^2_B(\Lambda^2(V))$ (not including the first Bianchi identity), $R$ also induces a self-adjoint operator $\overline{R}:S^2(V) \to S^2(V)$ via 
\begin{equation*}
    \overline{R}(\vp)_{ij}=\sum_{k,l=1}^n R_{iklj}\vp_{kl}. 
\end{equation*}
However, the nonnegativity of this operator is too strong in the sense that $\overline{R}: S^2(V) \to S^2(V)$ is nonnegative if and only if $R=0$. 
Therefore, one usually considers the restriction of $\overline{R}$ to the space of traceless symmetric two-tensors, i.e., the induced symmetric bilinear form $\mathring{R}:S^2_0(V)\times S^2_0(V) \to \R$ given by
    \begin{equation*}
        \mathring{R}(\vp,\psi)=\sum_{i,j,k,l=1}^n R_{ijkl}\vp_{il}\psi_{jk}.
   \end{equation*}
Following Nishikawa's terminology \cite{Nishikawa86}, we call the symmetric bilinear form $\mathring{R}$ \textit{the curvature operator of the second kind}. 

The action of the Riemann curvature tensor on symmetric two-tensors indeed has a long history. It appeared for K\"ahler manifolds in the study of the deformation of complex analytic structures by Calabi and Vesentini \cite{CV60}. They introduced the self-adjoint operator $\xi_{\a \b} \to R^{\rho}_{\ \a\b}{}^{\sigma} \xi_{\rho \sigma}$ from $S^2(T^{1,0}_p M)$ to itself, and computed the eigenvalues of this operator on Hermitian symmetric spaces of classical type, with the exceptional ones handled shortly after by Borel \cite{Borel60}. In the Riemannian setting, the operator $\overline{R}$ arises naturally in the context of deformations of Einstein structure in Berger and Ebin \cite{BE69} (see also \cite{Koiso79a, Koiso79b} and \cite{Besse08}). 
In addition, it appears in the Bochner-Weitzenb\"ock formulas for symmetric two-tensors (see for example \cite{MRS20}), for differential forms in \cite{OT79}, and for Riemannian curvature tensors in \cite{Kashiwada93}.  
In another direction, curvature pinching estimates for $\overline{R}$ were studied by Bourguignon and Karcher \cite{BK78}, and they calculated eigenvalues of $\overline{R}$ on the complex projective space with the Fubini-Study metric and the quaternionic projective space with its canonical metric. 
Nevertheless, the operators $\overline{R}$ and $\mathring{R}$ are significantly less investigated than $\hat{R}$. 

Let $N=\dim(S^2_0(V))=\frac{(n-1)(n+2)}{2}$ and $\{\vp_i\}_{i=1}^N$ be an orthonormal basis of $S^2_0(V)$. The $N\times N$ matrix $\mathring{R}(\vp_i, \vp_j)$ is called the matrix representation of $\mathring{R}$ with respect to the orthonormal basis $\{\vp_i\}_{i=1}^N$. 
The eigenvalues of $\mathring{R}$ refer to the eigenvalues of any of its matrix representations. 
Note that the eigenvalues of $\mathring{R}$ are independent of the choices of the orthonormal bases. % because matrix representations of $\mathring{R}$ with respect to different orthonormal bases are similar to each other. 

For a positive integer $1\leq k \leq N$, we say $R\in S^2_B(\Lambda^2(V))$ has $k$-nonnegative curvature operator of the second kind if the sum of the smallest $k$ eigenvalues of $\mathring{R}$ is nonnegative. 
This was extended to all $k\in [1,N]$ in \cite{Li22JGA} as follows. 
\begin{definition}
Let $N=\frac{(n-1)(n+2)}{2}$ and $\a \in [1, N]$. 
\begin{enumerate}
    \item We say $R\in S^2_B(\Lambda^2(V))$ has $\a$-nonnegative curvature operator of the second kind ($\mathring{R}$ is $\a$-nonnegative for short) if for any orthonormal basis $\{\vp_i\}_{i=1}^{N}$ of $S^2_0(V)$, it holds that 
\begin{equation*}\label{eq def R}
    \sum_{i=1}^{\lfloor \a \rfloor} \mathring{R}(\vp_i,\vp_i) +(\a -\lfloor \a \rfloor) \mathring{R}(\vp_{\lfloor \a \rfloor+1},\vp_{\lfloor \a \rfloor+1}) \geq  0. 
\end{equation*}
If the inequality is strict, then $R$ is said to have $\a$-positive curvature operator of the second kind ($\mathring{R}$ is $\a$-positive for short).
 \item We say $R\in S^2_B(\Lambda^2(V))$ has $\a$-nonpositive (respectively, $\a$-negative) curvature operator of the second kind if $-R$ has $\a$-nonnegative (respectively, $\a$-positive) curvature operator of the second kind.
\end{enumerate} 
\end{definition}
Note that when $\a=k$ is an integer, this agrees with the usual definition. We always omit $\a$ when $\a=1$. 
Clearly, $\a$-nonnegativity of $\mathring{R}$ implies $\b$-nonnegativity of $\mathring{R}$ if $\a \leq \b$. The same holds for positivity, negativity, and nonpositivity. 

\begin{definition}
A Riemannian manifold $(M^n,g)$ is said to have $\a$-nonnegative (respectively, $\a$-positive, $\a$-nonpositive, $\a$-negative) curvature operator of the second kind if $R_p \in S^2_B(\Lambda^2 T_pM)$ has $\a$-nonnegative (respectively, $\a$-positive, $\a$-nonpositive, $\a$-negative) curvature operator of the second kind for each $p\in M$.    
\end{definition}

The generalized definition is motivated by geometric examples. For instance, $\S^{n-1}\times \S^1$ has $\a$-nonnegative curvature operator of the second kind for any $\a \geq \left(n+\frac{n-2}{n}\right)$, but not for any $\a< \left(n+\frac{n-2}{n}\right)$. Another example is $(\CP^m,g_{FS})$, whose curvature operator of the second kind is $\a$-nonnegative for any $\a \geq \frac{3}{2}(m^2-1)$, but not for any $\a < \frac{3}{2}(m^2-1)$.

\subsection{K\"ahler Algebraic Curvature Operators}
Throughout this subsection, let $(V,g,J)$ be a complex Euclidean vector space of complex dimension $m\geq 1$. 
In other words, $(V,g)$ is a real Euclidean vector space of real dimension $2m$ and $J:V\to V$ is an endomorphism of $V$ satisfying the following two properties: 
\begin{enumerate}
    \item $J^2 =-\id $ on $V$,
    \item $g(X,Y)=g(JX,JY)$ for all $X,Y\in V$.
\end{enumerate} 
$J$ is called a complex structure on $V$.

\begin{definition}\label{def Kahler algebraic curvature operator}
$R\in S^2_B(\Lambda^2 V )$ is called a K\"ahler algebraic curvature operator if it satisfies 
\begin{equation*}%\label{eq R J-inv}
    R(X,Y,Z,W)=R(X,Y,JZ,JW),
\end{equation*}
for all $X,Y,Z,W \in V$. 
\end{definition}

Note that a K\"ahler algebraic curvature operator $R$ satisfies 
\begin{eqnarray}\label{eq R J-inv}
    &&R(X,Y,Z,W)=R(JX,JY,Z,W)\\ &=& R(X,Y,JZ,JW)=R(JX,JY,JZ,JW),\nonumber
\end{eqnarray}
and 
\begin{equation}\label{eq bisectional curvature}
    R(X,JX,Y,JY)=R(X,Y,X,Y) + R(X,JY,X,JY),
\end{equation}
for all $X,Y,Z,W \in V$. 
\eqref{eq R J-inv} follows from the symmetries of $R$ and \eqref{eq bisectional curvature} is a consequence of the first Bianchi identity and \eqref{eq R J-inv}.
The expression on the left-hand side of \eqref{eq bisectional curvature} is called bisectional curvature (see \cite{GK67}), holomorphic sectional curvature if $X=Y$ (see for instance \cite{YZ19}), and orthogonal bisectional curvature if $g(X,Y)=g(X,JY)=0$ (see for example \cite{LN20}). 
We will use \eqref{eq R J-inv} and \eqref{eq bisectional curvature} frequently in the rest of this paper. 

In view of \eqref{eq R J-inv} and \eqref{eq bisectional curvature}, the Ricci tensor of a K\"ahler algebraic curvature operator $R$ is given by 
\begin{equation}\label{eq Ricci curvature}
    \Ric(X,Y)=\sum_{i=1}^m R(X,JY,e_i, Je_i),
\end{equation}
and the scalar curvature of $R$, denoted by $S$, is given by 
\begin{equation}\label{eq scalar curvature}
    S=2\sum_{i,j=1}^m R(e_i,Je_i,e_j,Je_j),
\end{equation}
where $\{e_1,\cdots, e_m, Je_1, \cdots, Je_m\}$ is an orthonormal basis of $V$.

Next, we recall some definitions. 
\begin{definition}
A K\"ahler algebraic curvature operator $R$ is said to have 
\begin{enumerate}
\item nonnegative holomorphic sectional curvature if for any $X\in V$, \begin{equation*}
    R(X,JX,X,JX)\geq 0.
\end{equation*}
\item nonnegative orthogonal bisectional curvature if for any $X, Y\in V$ with $g(X, Y)=g(X,JY)=0$, 
\begin{equation*}
    R(X,JX,Y,JY)\geq 0.
\end{equation*}
\item nonnegative orthogonal Ricci curvature if for any $0\neq X\in V$,
\begin{equation*}
    \Ric^\perp(X,X):=\Ric(X,X)-R(X,JX,X,JX)/|X|^2 \geq 0
\end{equation*}
\end{enumerate} 
\end{definition}
Analogously, one defines the positivity, negativity, and non-positivity of holomorphic sectional curvature, orthogonal bisectional curvature, and orthogonal Ricci curvature. 
Finally, a K\"ahler manifold $(M^m,g,J)$ is said to satisfy a curvature condition if $R_p \in S^2_B(\Lambda^2 T_pM)$ satisfies the curvature condition at every $p\in M$.

\section{Identities}

In this section, we collect some identities that will be frequently used in subsequent sections. 
Many of them have been used explicitly or implicitly in earlier works such as \cite{OT79}, \cite{CGT21}, \cite{Li21, Li22JGA, Li22PAMS}, and \cite{NPW22}. 

\begin{lemma}\label{lemma 3.0}
Let $(V,g)$ be a real Euclidean vector space of dimension $n \geq 2$ and $\{e_i\}_{i=1}^n$ be an orthonormal basis of $V$. Then we have
\begin{equation}\label{eq g on basis}
   \langle e_i \odot e_j ,e_k \odot e_l \rangle = 2(\delta_{ik}\delta_{jl}+\delta_{il}\delta_{jk}),
\end{equation}
and
\begin{equation}\label{eq R on basis}
        \mathring{R}(e_i \odot e_j ,e_k \odot e_l)= 2(R_{iklj}+R_{ilkj}),
    \end{equation}
for all $1\leq i,j,k,l \leq n$. 
\end{lemma}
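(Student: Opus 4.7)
The plan is to prove both identities by direct index-expansion from the definitions, relying only on the components of $e_i \odot e_j$ and the symmetries of $R \in S^2_B(\Lambda^2 V)$.

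First I would record the component form of $e_i \odot e_j$. By the definitions of $\otimes$ and $\odot$ given in subsection 2.1, the tensor $e_i \odot e_j$ has components
\begin{equation*}
    (e_i \odot e_j)_{ab} = \delta_{ia}\delta_{jb} + \delta_{ib}\delta_{ja}.
\end{equation*}
For identity \eqref{eq g on basis}, I would use that the inner product $\langle A,B\rangle = \tr(A^T B)$ on $S^2(V)$ reads $\langle A,B\rangle = \sum_{a,b} A_{ab} B_{ab}$ in an orthonormal basis. Substituting the component formula above for both $e_i \odot e_j$ and $e_k \odot e_l$ and expanding the product of the two brackets yields four Kronecker delta terms, each of which collapses to either $\delta_{ik}\delta_{jl}$ or $\delta_{il}\delta_{jk}$, with each appearing twice. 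This gives the factor $2$ and finishes part \eqref{eq g on basis}.

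For identity \eqref{eq R on basis}, I would start from the definition
\begin{equation*}
    \mathring{R}(\varphi,\psi) = \sum_{a,b,c,d=1}^n R_{abcd}\, \varphi_{ad}\, \psi_{bc},
\end{equation*}
plug in $\varphi = e_i \odot e_j$ and $\psi = e_k \odot e_l$ using the component formula, and expand the product into four terms. After collapsing the Kronecker deltas one gets
\begin{equation*}
    \mathring{R}(e_i \odot e_j, e_k \odot e_l) = R_{ikli} \text{-style terms} = R_{iklj} + R_{ilkj} + R_{jkli} + R_{jlki}.
\end{equation*}
The last two terms are then reduced to the first two by applying the pair-swap symmetry $R_{abcd}=R_{cdab}$ together with the antisymmetries in the first and second pair: e.g.\ $R_{jkli} = R_{lijk}$ and then two sign flips give $R_{ilkj}$, and similarly $R_{jlki} = R_{iklj}$. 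This pairs the four terms into two copies each of $R_{iklj}$ and $R_{ilkj}$, producing the factor $2$ in \eqref{eq R on basis}.

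The only step that is not purely mechanical is the symmetry manipulation at the end of part \eqref{eq R on basis}; the hard part, such as it is, is just keeping track of signs under the three symmetries of an algebraic curvature tensor. Since the first Bianchi identity is not needed for either claim, the argument requires nothing beyond the definitions given in subsection 2.1 and the symmetries of $R \in S^2_B(\Lambda^2 V)$.
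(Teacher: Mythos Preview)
Your proposal is correct and follows essentially the same route as the paper: both proofs expand the symmetric products in components, obtain the four terms $R_{iklj}+R_{ilkj}+R_{jkli}+R_{jlki}$, and then collapse the last two onto the first two using only the pair-swap and skew symmetries of $R$ (no Bianchi). Your write-up is in fact slightly more explicit than the paper's about which symmetries are used in the final step.
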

\begin{proof}
Using $(e_i\otimes e_j)(e_k,e_l)=\delta_{ik}\delta_{jl}$, we compute that
\begin{eqnarray*}
    && \langle e_i \odot e_j ,e_k \odot e_l \rangle  \\
    &=& \sum_{p,q=1}^n (e_i \odot e_j)(e_p,e_q) \cdot (e_k \odot e_l)(e_p,e_q) \\
    &=& \sum_{p,q=1}^n(\delta_{ip}\delta_{jq}+\delta_{iq}\delta_{jp})(\delta_{kp}\delta_{lq}+\delta_{kq}\delta_{lp}) \\
    &=& 2(\delta_{ik}\delta_{jl}+\delta_{il}\delta_{jk}).
\end{eqnarray*}
This proves \eqref{eq g on basis}. 
To prove \eqref{eq R on basis}, we calculate that 
\begin{eqnarray*}
&& \mathring{R}(e_i \odot e_j ,e_k \odot e_l) \\
&=&  \sum_{p,q, r, s=1}^n R_{prsq}\cdot (e_i \odot e_j)(e_p,e_q) \cdot (e_k \odot e_l)(e_r,e_s)  \\
&=& \sum_{p,q, r, s=1}^n R_{prsq}(\delta_{ip}\delta_{jq}+\delta_{iq}\delta_{jp})(\delta_{kr}\delta_{ls}+\delta_{ks}\delta_{lr})   \\
&=& R_{iklj}+R_{ilkj}+R_{jkli}+R_{jlki}\\
&=& 2(R_{iklj}+R_{ilkj}).
\end{eqnarray*}
\end{proof} 

\begin{lemma}\label{lemma ijkl}
    Let $\{e_i,e_j,e_k,e_l\}$ be an orthonormal four-frame in a Euclidean vector space $(V,g)$ of dimension $n\geq 4$. Define the following traceless symmetric two-tensors: 
    \begin{eqnarray*}
        h^{\pm}_1 &=& \frac{1}{2}\left(e_i\odot e_j \pm e_k \odot e_l \right), \\
        h_2 &=& \frac{1}{4}\left(e_i\odot e_i +e_j \odot e_j -e_k \odot e_k -e_l \odot e_l \right) .
    \end{eqnarray*}
Then we have $\|h^{\pm}_1\|=\|h_2\|=1$ and 
\begin{eqnarray*}
    \mathring{R}(h^{\pm}_1,h^{\pm}_1) &=& \frac{1}{2} \left( R_{ijij} +R_{klkl} \right) \pm R_{iklj} \pm R_{ilkj}, \\
    \mathring{R}(h_2,h_2) &=& \frac{1}{2}\left( -R_{ijij}-R_{klkl}+R_{ikik}+R_{ilil}+R_{jkjk}+R_{jljl}\right).
\end{eqnarray*}
\end{lemma}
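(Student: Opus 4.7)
The plan is a direct bilinear expansion, applying the two identities \eqref{eq g on basis} and \eqref{eq R on basis} from the previous lemma together with the antisymmetries of the Riemann tensor.

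First I would verify normalization and tracelessness. Since the four indices are distinct, \eqref{eq g on basis} gives
$\langle e_i\odot e_j, e_i\odot e_j\rangle = 2 = \langle e_k\odot e_l, e_k\odot e_l\rangle$ and $\langle e_i\odot e_j, e_k\odot e_l\rangle = 0$, so $\|h^\pm_1\|^2 = \frac{1}{4}(2+2)=1$. Likewise $\langle e_a\odot e_a, e_b\odot e_b\rangle = 4\delta_{ab}$ for $a,b\in\{i,j,k,l\}$, giving $\|h_2\|^2 = \frac{1}{16}\cdot 4\cdot 4 = 1$. Tracelessness is immediate: the diagonal entries of $h^\pm_1$ vanish because $i,j,k,l$ are distinct, while $\operatorname{tr}(h_2) = \frac{1}{4}(2+2-2-2)=0$.

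Next I would compute $\mathring R(h^\pm_1,h^\pm_1)$ by expanding $h^\pm_1 = \frac{1}{2}(e_i\odot e_j \pm e_k\odot e_l)$ bilinearly. Specializing \eqref{eq R on basis} yields $\mathring R(e_i\odot e_j, e_i\odot e_j) = 2(R_{iijj}+R_{ijij}) = 2R_{ijij}$, using $R_{iijj}=0$ from antisymmetry in the first pair; similarly $\mathring R(e_k\odot e_l, e_k\odot e_l) = 2R_{klkl}$, and the cross term is $\mathring R(e_i\odot e_j, e_k\odot e_l) = 2(R_{iklj}+R_{ilkj})$. Assembling these with the coefficient $\frac{1}{4}$ gives the stated identity.

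For $h_2$ I would expand into sixteen terms of the form $\varepsilon_a\varepsilon_b\,\mathring R(e_a\odot e_a, e_b\odot e_b)$ with signs $\varepsilon_i=\varepsilon_j=+1$ and $\varepsilon_k=\varepsilon_l=-1$. The diagonal terms $a=b$ vanish because $R_{aaaa}=0$; for $a\neq b$, \eqref{eq R on basis} combined with antisymmetry in the last pair gives $\mathring R(e_a\odot e_a, e_b\odot e_b) = 4R_{abba} = -4R_{abab}$. Summing over the six unordered pairs in $\{i,j,k,l\}$ (each appearing twice), weighted by $\varepsilon_a\varepsilon_b$, and multiplying by the overall factor $\frac{1}{16}$ reproduces the asserted formula. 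The entire argument is careful bookkeeping—no step presents a genuine obstacle, provided the antisymmetries $R_{aabb}=0$ and $R_{abba}=-R_{abab}$ are invoked consistently when specializing \eqref{eq R on basis}.
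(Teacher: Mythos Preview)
Your proposal is correct and follows essentially the same approach as the paper: a direct bilinear expansion using \eqref{eq g on basis} and \eqref{eq R on basis}, together with the antisymmetries of $R$. Your organization of the $h_2$ computation via the sign pattern $\varepsilon_a\varepsilon_b$ is a tidy way of bookkeeping, but the underlying calculation is the same as the paper's.
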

\begin{proof}
One easily verifies using \eqref{eq g on basis} that $\|h^{\pm}_1\|=\|h_2\|=1$. 
We compute that
\begin{eqnarray*}
    && 4 \mathring{R}(h^{\pm}_1,h^{\pm}_1) \\
    &=&\mathring{R}(e_i\odot e_j \pm e_k \odot e_l, e_i\odot e_j \pm e_k \odot e_l) \\
    &=& \mathring{R}(e_i\odot e_j , e_i\odot e_j) +\mathring{R}(e_k \odot e_l, e_k \odot e_l) \pm 2 \mathring{R}(e_i\odot e_j, e_k \odot e_l) \\
    &=& 2R_{ijij} +2R_{klkl} \pm 4 \left(R_{iklj}+R_{ilkj} \right),
\end{eqnarray*}
where we have used \eqref{eq R on basis} in getting the last line. 

Using \eqref{eq R on basis} again, we obtain that
\begin{eqnarray*}
    && 16 \mathring{R}(h_2,h_2) \\
    &=&\mathring{R}(e_i\odot e_i+e_j\odot e_j, e_i \odot e_i+ e_j\odot e_j) \\
     &&+\mathring{R}(e_k\odot e_k+e_l\odot e_l, e_k \odot e_k+ e_l\odot e_l) \\
     &&-2\mathring{R}(e_i\odot e_i+e_j\odot e_j, e_k \odot e_k+ e_l\odot e_l) \\
    &=& 2 \mathring{R}(e_i\odot e_i, e_j\odot e_j) +2 \mathring{R}(e_k\odot e_k, e_l\odot e_l) \\ 
    && -2 \mathring{R}(e_i\odot e_i, e_k\odot e_k) -2 \mathring{R}(e_i\odot e_i, e_l\odot e_l) \\ 
     && -2 \mathring{R}(e_j\odot e_j, e_k\odot e_k) -2 \mathring{R}(e_j\odot e_j, e_l\odot e_l) \\ 
    &=& -8(R_{ijij} +R_{klkl}) +8\left(R_{ikik}+R_{ilil}+R_{jkjk}+R_{jljl} \right).
\end{eqnarray*}
\end{proof}

\begin{lemma}\label{lemma ij}
Let $\{e_i,e_j\}$ be an orthonormal two-frame in a Euclidean vector space $(V,g)$ of dimension $n\geq 2$. Define the following traceless symmetric two-tensors:
\begin{eqnarray*}
        h_3 &=& \frac{1}{2\sqrt{2}}\left(e_i\odot e_i -e_j \odot e_j  \right), \\
        h_4 &=& \frac{1}{\sqrt{2}} e_i \odot e_j.
    \end{eqnarray*}
Then we have $\|h_3\|=\|h_4\| =1$ and 
\begin{equation*}
    \mathring{R}(h_3,h_3)= \mathring{R}(h_4,h_4) = R_{ijij}.
\end{equation*}
\end{lemma}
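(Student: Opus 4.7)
The plan is to reduce everything directly to the two identities \eqref{eq g on basis} and \eqref{eq R on basis} from Lemma \ref{lemma 3.0}, expanding $h_3$ and $h_4$ by bilinearity. There is no geometric content beyond that; this lemma is the $2$-frame analogue of Lemma \ref{lemma ijkl} and is needed later to feed individual sectional curvatures $R_{ijij}$ into the traces that will appear in Theorem \ref{thm algebra R}.

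For the norms, I would first use \eqref{eq g on basis} with $i\neq j$ to get $\|e_i\odot e_j\|^2 = 2(\delta_{ii}\delta_{jj}+\delta_{ij}\delta_{ji}) = 2$, hence $\|h_4\|^2 = \tfrac{1}{2}\cdot 2 = 1$. Similarly, $\|e_i\odot e_i\|^2 = 4$ and $\langle e_i\odot e_i, e_j\odot e_j\rangle = 0$ for $i\neq j$, so expanding the square gives $\|e_i\odot e_i - e_j\odot e_j\|^2 = 8$, whence $\|h_3\|^2 = \tfrac{1}{8}\cdot 8 = 1$.

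For the action of $\mathring R$, the computation for $h_4$ is immediate from \eqref{eq R on basis}:
\begin{equation*}
\mathring R(h_4,h_4) = \tfrac{1}{2}\mathring R(e_i\odot e_j, e_i\odot e_j) = \tfrac{1}{2}\cdot 2(R_{iijj}+R_{ijij}) = R_{ijij},
\end{equation*}
since $R_{iijj}=0$ by the antisymmetry of $R$ in its first two slots. For $h_3$, expanding bilinearly gives three terms $\mathring R(e_i\odot e_i,e_i\odot e_i)$, $\mathring R(e_j\odot e_j,e_j\odot e_j)$, and a cross-term. The first two vanish (both involve $R_{iiii}$ or $R_{jjjj}$), while \eqref{eq R on basis} gives $\mathring R(e_i\odot e_i,e_j\odot e_j) = 2(R_{ijji}+R_{ijji}) = -4R_{ijij}$ using $R_{ijji}=-R_{ijij}$. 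Hence
\begin{equation*}
\mathring R(h_3,h_3) = \tfrac{1}{8}\bigl(0 + 0 - 2(-4R_{ijij})\bigr) = R_{ijij},
\end{equation*}
as claimed.

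The main obstacle is essentially bookkeeping rather than mathematical; one just has to keep track of the normalization constants and invoke the right curvature symmetries ($R_{iijj}=0$, $R_{ijji}=-R_{ijij}$) at the right moment.
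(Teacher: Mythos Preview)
Your proof is correct and follows exactly the approach the paper indicates: a direct computation using \eqref{eq g on basis} and \eqref{eq R on basis}. The paper's own proof is simply the one-line remark that this is a straightforward computation from those two identities, and you have written out precisely those details.
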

\begin{proof}
This is a straightforward computation using \eqref{eq g on basis} and \eqref{eq R on basis}. 
\end{proof}

\begin{lemma}
Let $\{e_1, \cdots, e_m, Je_1, \cdots, Je_m\}$ be an orthonormal basis of a complex Euclidean vector space $(V,g,J)$ of complex dimension $m\geq 1$. Then for any $1 \leq i, j \leq m$, we have
\begin{equation}\label{eq R iiJiJi}
    \mathring{R}(e_i \odot e_i +Je_i \odot Je_i, e_j \odot e_j +Je_j \odot Je_j) =-8R(e_i,Je_i,e_j,Je_j).
\end{equation}
\end{lemma}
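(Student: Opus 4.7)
The plan is to expand the left-hand side into four scalar bilinear pieces using \eqref{eq R on basis}, then exploit the $J$-invariance identity \eqref{eq R J-inv} and the bisectional curvature identity \eqref{eq bisectional curvature} to collapse everything to a single multiple of $R(e_i, Je_i, e_j, Je_j)$.

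First I would write
\begin{align*}
\mathring{R}(e_i \odot e_i + Je_i \odot Je_i,\; e_j \odot e_j + Je_j \odot Je_j)
 &= \mathring{R}(e_i\odot e_i, e_j\odot e_j) + \mathring{R}(e_i\odot e_i, Je_j\odot Je_j) \\
 &\quad + \mathring{R}(Je_i\odot Je_i, e_j\odot e_j) + \mathring{R}(Je_i\odot Je_i, Je_j\odot Je_j).
\end{align*}
By \eqref{eq R on basis} each of the four pieces has the form $\mathring{R}(u\odot u, v\odot v) = 2(R_{uvvu} + R_{uvvu}) = -4R(u,v,u,v)$, so the right side becomes
\begin{equation*}
-4\bigl[\, R(e_i,e_j,e_i,e_j) + R(e_i,Je_j,e_i,Je_j) + R(Je_i,e_j,Je_i,e_j) + R(Je_i,Je_j,Je_i,Je_j) \,\bigr].
\end{equation*}

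Next I would use the $J$-invariance of a K\"ahler curvature operator, \eqref{eq R J-inv}, in the form $R(JX,JY,JZ,JW) = R(X,Y,Z,W)$. Applied to the last two terms (using $J^2=-\id$ so the signs cancel in pairs), this gives
$R(Je_i,Je_j,Je_i,Je_j) = R(e_i,e_j,e_i,e_j)$ and $R(Je_i,e_j,Je_i,e_j) = R(e_i,Je_j,e_i,Je_j)$. Hence the bracket simplifies to $2R(e_i,e_j,e_i,e_j) + 2R(e_i,Je_j,e_i,Je_j)$, and the whole expression becomes
\begin{equation*}
-8\bigl[\, R(e_i,e_j,e_i,e_j) + R(e_i,Je_j,e_i,Je_j) \,\bigr].
\end{equation*}

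Finally, applying the bisectional curvature identity \eqref{eq bisectional curvature} with $X=e_i$ and $Y=e_j$ recognizes the quantity inside the brackets as exactly $R(e_i,Je_i,e_j,Je_j)$, yielding the claimed equality. There is no real obstacle: the only things to keep straight are the sign conventions when converting $R_{abba}$ into $-R_{abab}$ and applying $J^2=-\id$ twice so the signs cancel; once those bookkeeping steps are executed, the identity follows in a few lines.
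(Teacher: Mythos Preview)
Your proof is correct and follows essentially the same route as the paper's: expand bilinearly into four terms, evaluate each via \eqref{eq R on basis} as $-4R(\cdot,\cdot,\cdot,\cdot)$, pair them using the $J$-invariance \eqref{eq R J-inv}, and finish with the bisectional identity \eqref{eq bisectional curvature}. The only cosmetic difference is that you package the second step as the general observation $\mathring{R}(u\odot u,v\odot v)=-4R(u,v,u,v)$ before specializing, whereas the paper writes out the four sectional terms directly.
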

\begin{proof}
This follows from a routine computation using \eqref{eq R on basis}, \eqref{eq R J-inv}, and \eqref{eq bisectional curvature} as follows: 
\begin{eqnarray*}  
&& \mathring{R}(e_i \odot e_i +Je_i \odot Je_i, e_j \odot e_j +Je_j \odot Je_j) \\
&=& \mathring{R}(e_i \odot e_i, e_j \odot e_j) +\mathring{R}(e_i \odot e_i, Je_j \odot Je_j) \\
&& +\mathring{R}(Je_i \odot Je_i, e_j \odot e_j)+\mathring{R}(Je_i \odot Je_i, Je_j \odot Je_j) \\
&=& -4R(e_i,e_j,e_i,e_j)-4R(e_i,Je_j,e_i,Je_j)\\
&& -4R(Je_i,e_j,Je_i,e_j)-4R(Je_i,Je_j,Je_i,Je_j) \\
&=& -8R(e_i,e_j,e_i,e_j)-8R(e_i,Je_j,e_i,Je_j)\\
&=& -8R(e_i,Je_i,e_j,Je_j).
\end{eqnarray*}   
\end{proof}

The author observed in \cite[Proposition 4.1]{Li21} (see also \cite[Proposition 1.2]{NPW22}) that the trace of $\mathring{R}$ is equal to $\frac{n+2}{2n}S$, where $S$ denotes the scalar curvature. That is to say, if $\{\vp_i\}_{i=1}^N$ is an orthonormal basis of $S^2_0(V)$, then 
\begin{equation}\label{trace R}
    \sum_{i=1}^N \mathring{R}(\vp_i,\vp_i) =\frac{n+2}{2n}S.
\end{equation}
This implies that 
\begin{lemma}\label{lemma R and S}
$R\in S^2_B(\Lambda^2 V)$ has $\frac{(n-1)(n+2)}{2}$-nonnegative (respectively, $\frac{(n-1)(n+2)}{2}$-nonpositive) curvature operator of the second kind if and only if $R$ has nonnegative (respectively, nonpositive) scalar curvature $S$.   
\end{lemma}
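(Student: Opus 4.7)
The plan is to observe that since $N = \frac{(n-1)(n+2)}{2}$ is the full dimension of $S^2_0(V)$, the $N$-nonnegativity condition reduces to a statement about the trace of $\mathring{R}$, which by the identity \eqref{trace R} is proportional to the scalar curvature.

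First I would check that $N = \frac{(n-1)(n+2)}{2}$ is always a positive integer (a straightforward parity check on $(n-1)(n+2) = n^2+n-2$), so that $\lfloor N \rfloor = N$ and $N - \lfloor N \rfloor = 0$. Hence the defining inequality for $N$-nonnegativity of $\mathring{R}$ collapses to
\begin{equation*}
\sum_{i=1}^N \mathring{R}(\vp_i, \vp_i) \geq 0
\end{equation*}
for every orthonormal basis $\{\vp_i\}_{i=1}^N$ of $S^2_0(V)$. Since the left-hand side is simply the trace of the symmetric bilinear form $\mathring{R}$, it is independent of the choice of orthonormal basis, so the condition is equivalent to $\tr(\mathring{R}) \geq 0$.

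Next I would invoke the trace identity \eqref{trace R}, which yields
\begin{equation*}
\sum_{i=1}^N \mathring{R}(\vp_i, \vp_i) = \frac{n+2}{2n} S.
\end{equation*}
Since the coefficient $\frac{n+2}{2n}$ is strictly positive for $n \geq 2$, $N$-nonnegativity of $\mathring{R}$ is equivalent to $S \geq 0$. The nonpositive case follows by applying the same argument to $-R$, which reverses the signs of both $\mathring{R}$ and $S$.

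Since the conclusion is an immediate consequence of \eqref{trace R}, there is essentially no obstacle; the only subtlety is to make clear that when $\alpha = N$ the fractional part of the definition vanishes and the condition depends only on the trace, not on any particular basis. The proof is a two-line verification.
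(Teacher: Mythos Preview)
Your proposal is correct and follows exactly the approach implicit in the paper: the lemma is stated immediately after \eqref{trace R} with the phrase ``This implies that'', and no separate proof is given. Your observation that $\alpha = N$ is an integer so the condition reduces to the (basis-independent) trace, together with the positivity of $\frac{n+2}{2n}$, is precisely the intended two-line verification.
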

\begin{lemma}\label{S flat implies R flat}
Suppose that $R\in S^2_B(\Lambda^2 V)$ has $\a$-nonnegative (respectively, $\a$-nonpositive) curvature operator of the second kind for some $\a < \frac{(n-1)(n+2)}{2}$. If $S=0$, then $R=0$. 
\end{lemma}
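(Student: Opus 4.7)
The strategy is to combine the $\alpha$-nonnegativity hypothesis with the trace identity \eqref{trace R} and the strict inequality $N-\alpha > 0$ (where $N=\tfrac{(n-1)(n+2)}{2}$) to force every eigenvalue of $\mathring{R}$ to vanish. Once $\mathring{R}$ vanishes as a bilinear form, Lemma~\ref{lemma ij} immediately forces every sectional curvature to be zero, hence $R=0$.

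First I would reduce to the nonnegative case: replacing $R$ by $-R$ swaps $\alpha$-nonpositivity for $\alpha$-nonnegativity and leaves $S=0$ unchanged, so it suffices to treat the nonnegative case. Let $\lambda_1 \le \cdots \le \lambda_N$ denote the eigenvalues of $\mathring{R}$ in non-decreasing order, and write $\alpha = k+t$ with $k=\lfloor \alpha \rfloor \in \{1,\ldots,N-1\}$ and $t=\alpha-k \in [0,1)$. Evaluating $\alpha$-nonnegativity on an orthonormal eigenbasis sorted by eigenvalue gives
\[
\lambda_1 + \cdots + \lambda_k + t\,\lambda_{k+1} \;\ge\; 0,
\]
while \eqref{trace R} together with $S=0$ yields $\sum_{i=1}^{N}\lambda_i = 0$.

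The main step is a two-sided squeeze on $\lambda_{k+1}$. From $\lambda_i \le \lambda_{k+1}$ for $i \le k$ one gets
\[
0 \;\le\; \sum_{i=1}^{k}\lambda_i + t\,\lambda_{k+1} \;\le\; (k+t)\,\lambda_{k+1} = \alpha\,\lambda_{k+1},
\]
so $\lambda_{k+1} \ge 0$. From $\lambda_j \ge \lambda_{k+1}$ for $j \ge k+1$ and the trace identity, $-\sum_{i=1}^{k}\lambda_i = \sum_{j=k+1}^{N}\lambda_j \ge (N-k)\,\lambda_{k+1}$; combining this with $\sum_{i=1}^{k}\lambda_i \ge -t\,\lambda_{k+1}$ gives $(N-\alpha)\,\lambda_{k+1} \le 0$. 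Since $N-\alpha > 0$, this forces $\lambda_{k+1} \le 0$, hence $\lambda_{k+1}=0$. Tracing equality back through the same chain then forces $\lambda_i = 0$ for every $i$.

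With all eigenvalues zero, $\mathring{R}$ vanishes identically as a bilinear form on $S^2_0(V)$. Applying Lemma~\ref{lemma ij} with $h_4 = \tfrac{1}{\sqrt{2}}\, e_i \odot e_j$ to any orthonormal pair $\{e_i,e_j\}$ gives $R_{ijij} = \mathring{R}(h_4,h_4) = 0$, so every sectional curvature vanishes and therefore $R=0$. I do not expect any serious obstacle: the argument is a short linear-algebra squeeze, and the one use of the hypothesis $\alpha < N$ is precisely what keeps $N-\alpha$ bounded away from zero at the critical step.
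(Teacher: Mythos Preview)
Your proof is correct and is exactly the natural elaboration of what the paper leaves implicit: the paper states this lemma as an immediate consequence of the trace identity \eqref{trace R} without writing out a proof, and your eigenvalue squeeze together with Lemma~\ref{lemma ij} is the intended argument. One minor remark: once you know $\lambda_{k+1}=0$, the back-tracing is even simpler than you indicate, since $\lambda_i\le\lambda_{k+1}=0$ for $i\le k$ combined with $\sum_{i=1}^{k}\lambda_i\ge 0$ already forces $\lambda_1=\cdots=\lambda_k=0$, and then the trace condition finishes off the rest.
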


\section{An orthonormal basis for $S^2_0(V)$}
Below we construct an orthonormal basis for $S^2_0(V)$ on a complex Euclidean vector space $(V,g,J)$. 

\begin{lemma}\label{lemma basis +}
Let $\{e_1, \cdots, e_m, Je_1, \cdots, Je_m\}$ be an orthonormal basis of a complex Euclidean vector space $(V,g,J)$. Let 
\begin{equation*}
    E^+=\spn\{u \odot v -Ju \odot Jv: u,v \in V \}.
\end{equation*}
Define 
\begin{eqnarray*}
\vp^+_{ij}  &=& \frac{1}{2} \left( e_i \odot e_j - Je_i \odot Je_j \right), \text{ for } 1 \leq i < j \leq m, \\
\psi^{+}_{ij}  &=& \frac{1}{2} \left( e_i \odot J e_j + Je_i \odot e_j \right), \text{ for } 1 \leq i < j \leq m, \\
    \theta_{i} &=& \frac{1}{2\sqrt{2}} \left( e_i \odot e_i -Je_i \odot Je_i \right), \text{ for } 1\leq i \leq m, \\
     \theta_{m+i} &=& \frac{1}{\sqrt{2}} e_i \odot J e_i, \text{ for } 1\leq i \leq m.
\end{eqnarray*}
Then 
\begin{equation}\label{eq basis}
   \mathcal{E}^+= \{\vp^{+}_{ij}\}_{1\leq i < j \leq m} \cup\{\psi^{+}_{ij}\}_{1\leq i < j \leq m} \cup \{\theta_i \}_{i=1}^{2m} 
\end{equation}
forms an orthonormal basis of $E^+$. In particular, $\dim(E^+)=m(m+1)$. 
\end{lemma}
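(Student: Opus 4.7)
The statement packages four claims: (i) every element of $\mathcal{E}^+$ lies in $E^+$; (ii) the elements of $\mathcal{E}^+$ are orthonormal; (iii) $\mathcal{E}^+$ spans $E^+$; (iv) $|\mathcal{E}^+|=m(m+1)$. Claims (i) and (iv) are essentially bookkeeping, claim (ii) reduces to Lemma~\ref{lemma 3.0}, and claim (iii) is where the real work lies.

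\emph{Step 1 (Membership).} For each element I would exhibit explicit $u,v\in V$ realizing it as a multiple of $u\odot v-Ju\odot Jv$. Take $(u,v)=(e_i,e_j)$ for $\varphi^+_{ij}$; $(u,v)=(e_i,Je_j)$ for $\psi^+_{ij}$, using $J^2=-\id$ so that $-Ju\odot Jv=Je_i\odot e_j$; $(u,v)=(e_i,e_i)$ for $\theta_i$; and $(u,v)=(e_i,Je_i)$ for $\theta_{m+i}$, noting $\odot$ is symmetric so $e_i\odot Je_i=Je_i\odot e_i$ and the two pieces reinforce.

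\emph{Step 2 (Orthonormality).} Apply \eqref{eq g on basis} with $\{e_1,\ldots,e_m,Je_1,\ldots,Je_m\}$ as the ambient orthonormal basis. A direct computation shows each of $\varphi^+_{ij}$, $\psi^+_{ij}$, $\theta_i$, $\theta_{m+i}$ is a unit vector (the factors $\frac12$, $\frac{1}{2\sqrt 2}$, $\frac{1}{\sqrt 2}$ are chosen precisely for this). For pairwise orthogonality, I would observe that when two elements are expanded in the basis $\{f_a\odot f_b : a\le b\}$ (with $f_a$ ranging over $e_i$'s and $Je_i$'s), they involve disjoint unordered index pairs, so the Kronecker deltas in \eqref{eq g on basis} annihilate the cross term. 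A small number of cases with shared supports (e.g.\ $\theta_i$ versus $\theta_{m+i}$, or $\varphi^+_{ij}$ versus $\psi^+_{ij}$) need to be checked by hand; each collapses to zero due to sign cancellations or to $e\ne Je$.

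\emph{Step 3 (Spanning, the main obstacle).} By bilinearity of $(u,v)\mapsto u\odot v-Ju\odot Jv$ it suffices to handle the case where $u,v$ run over the orthonormal basis $\{e_i,Je_i\}$. Writing $f_a=e_a$ for $1\le a\le m$ and $f_{m+a}=Je_a$, so that $J$ acts by $Jf_a=f_{m+a}$, $Jf_{m+a}=-f_a$, I would enumerate four subcases for the generator $f_a\odot f_b-Jf_a\odot Jf_b$ based on whether $a,b$ lie in $\{1,\dots,m\}$ or $\{m+1,\dots,2m\}$. Each case produces an expression of the form $\pm(e_i\odot e_j-Je_i\odot Je_j)$ or $\pm(e_i\odot Je_j+Je_i\odot e_j)$; up to sign and scaling this is one of the listed basis vectors (reducing to $\varphi^+_{ij}$, $\psi^+_{ij}$, $\theta_i$, or $\theta_{m+i}$ depending on whether the two indices coincide and, for $\psi^+$, using the symmetry $\psi^+_{ij}=\psi^+_{ji}$ that is visible from the definition). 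Thus every generator of $E^+$ lies in $\spn \mathcal{E}^+$.

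\emph{Step 4 (Dimension).} Steps 1--3 already give $E^+=\spn \mathcal{E}^+$, and Step 2 shows the elements of $\mathcal{E}^+$ are linearly independent, so $\dim E^+=|\mathcal{E}^+|=2\binom{m}{2}+2m=m(m+1)$. The only real subtlety is the index gymnastics in Step 3, in particular keeping track of the sign change coming from $J^2=-\id$ when both $u$ and $v$ are taken from the ``$Je$'' half of the basis; once that is pinned down the rest is routine.
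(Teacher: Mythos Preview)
Your proposal is correct and follows essentially the same route as the paper: membership is checked directly, orthonormality via \eqref{eq g on basis}, and spanning by bilinearity. The only cosmetic difference is in Step~3: the paper expands a general $u=\sum x_i e_i+\sum y_i Je_i$, $v=\sum z_i e_i+\sum w_i Je_i$ and writes $u\odot v-Ju\odot Jv$ as an explicit linear combination of the $\varphi^+_{ij},\psi^+_{ij},\theta_i,\theta_{m+i}$ all at once, whereas you reduce to basis vectors and enumerate four cases; these are equivalent presentations of the same computation.
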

\begin{proof}
Clearly, $\mathcal{E}^+ \subset E^+$. Using \eqref{eq g on basis}, one verifies
that $\mathcal{E}^+$ is an orthonormal subset of $E^+$. The statement $\mathcal{E}^+$ spans $E^+$ follows from the following observation. If 
\begin{eqnarray*}
    u &=& \sum_{i=1}^m x_i e_i +\sum_{i=1}^m y_i Je_i , \\
    v &=& \sum_{i=1}^m z_i e_i +\sum_{i=1}^m w_i Je_i,
\end{eqnarray*}
then 
\begin{eqnarray*}
    && u\odot v -Ju \odot Jv \\
    &=& \sum_{i,j=1}^m (x_iz_j-y_iw_j)(e_i\odot e_j -Je_i \odot Je_j)  \\
    &&+ \sum_{i,j=1}^m (x_iw_j+y_iz_j)(e_i\odot Je_j + Je_i \odot e_j)  \\
    &=& 4\sum_{1\leq i < j\leq m} (x_iz_j-y_iw_j)\vp^+_{ij}  +4\sum_{1\leq i < j\leq m} (x_iw_j+y_iz_j)\psi^+_{ij}  \\
    && + 2\sqrt{2} \sum_{i=1}^m (x_iz_i-y_iw_i)\theta_{i} + 2\sqrt{2} \sum_{i=1}^{m} (x_iw_i+y_iz_i)\theta_{m+i}.
\end{eqnarray*}
Thus, $\mathcal{E}^+$ forms an orthonormal basis of $E^+$ and $\dim(E^+)=m(m+1)$. 
\end{proof}

\begin{lemma}\label{lemma basis -}
Let $\{e_1, \cdots, e_m, Je_1, \cdots, Je_m\}$ be an orthonormal basis of a complex Euclidean vector space $(V,g,J)$. Let $E^-=(E^+)^\perp$ be the orthogonal complement of $E^+$, where $E^+$ is the subspace of $S^2_0(V)$ defined in Lemma \ref{lemma basis +}. 
Define
\begin{eqnarray*}
\vp^{-}_{ij}  &=& \frac{1}{2} \left( e_i \odot e_j + Je_i \odot Je_j \right), \text{ for } 1 \leq i < j \leq m, \\
\psi^{-}_{ij}  &=& \frac{1}{2} \left( e_i \odot J e_j - Je_i \odot e_j \right), \text{ for } 1 \leq i < j \leq m,
\end{eqnarray*}
and
\begin{eqnarray*}
    \eta_k &=& \frac{k}{\sqrt{8k(k+1)}}  (e_{k+1}\odot e_{k+1} +Je_{k+1} \odot Je_{k+1}) \\
    &&  - \frac{1}{\sqrt{8k(k+1)}}\sum_{i=1}^{k}(e_i \odot e_i +Je_i \odot Je_i),
\end{eqnarray*} 
for $1\leq k \leq m-1$. Then 
\begin{equation}%\label{eq basis}
    \mathcal{E}^-=\{\vp^{-}_{ij}\}_{1\leq i < j \leq m} \cup \{\psi^{-}_{ij}\}_{1\leq i < j \leq m} \cup \{\eta_k\}_{k=1}^{m-1}
\end{equation}
forms an orthonormal basis of $E^-$. In particular, $\dim(E^-)=m^2-1$. 
\end{lemma}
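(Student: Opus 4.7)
The plan has three steps: verify that each element of $\mathcal{E}^-$ lies in $E^-$, check that $\mathcal{E}^-$ is orthonormal, and then match dimensions to conclude. Since $E^- = (E^+)^\perp$ inside $S^2_0(V)$, the first step amounts to showing each element of $\mathcal{E}^-$ is traceless and orthogonal to every element of the basis $\mathcal{E}^+$ from Lemma \ref{lemma basis +}. The counts will line up: $\dim(S^2_0(V)) = (2m-1)(m+1)$, $\dim(E^+) = m(m+1)$, so $\dim(E^-) = (m+1)(m-1) = m^2-1$, and $|\mathcal{E}^-| = 2\binom{m}{2} + (m-1) = m^2 - 1$ as well. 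Thus once the first two steps are in hand, $\mathcal{E}^-$ is automatically an orthonormal basis of $E^-$.

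The orthogonality and orthonormality checks are direct applications of \eqref{eq g on basis}, and everything is driven by a simple sign/parity pattern. The $\pm$ pairs are designed so that their inner product passes through as ``equal $+$ equal'' versus ``equal $-$ equal'' and collapses; for instance $\langle e_i\odot e_j + Je_i\odot Je_j,\ e_i\odot e_j - Je_i\odot Je_j\rangle = \|e_i\odot e_j\|^2 - \|Je_i\odot Je_j\|^2 = 0$. Mixed-type inner products such as $\langle \vp^-_{ij}, \psi^+_{kl}\rangle$, $\langle \vp^-_{ij}, \theta_p\rangle$, and $\langle \eta_k, \vp^{\pm}_{ij}\rangle$ (with $i\neq j$) vanish termwise because \eqref{eq g on basis} then forces a pairing of the form ``$e$ with $Je$'' or forces $i = j$.

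The only mildly delicate piece is the construction of the $\eta_k$'s. Setting $f_i := e_i\odot e_i + Je_i \odot Je_i$, the vectors $f_1, \ldots, f_m$ are pairwise orthogonal with $\|f_i\|^2 = 8$, and each $f_i$ has trace $4$, so the traceless slice of $\spn\{f_1,\ldots,f_m\}$ is $(m-1)$-dimensional. The $\eta_k$'s amount to the Gram--Schmidt orthonormalization of the partial sums: $\eta_k$ is proportional to $k f_{k+1} - \sum_{i=1}^{k} f_i$, and the normalizing constant $1/\sqrt{8k(k+1)}$ is calibrated to give $\|\eta_k\|=1$. The step I expect to require the most care is verifying $\langle \eta_k, \eta_l\rangle = 0$ for $k < l$: the $k$ overlapping indices $i=1,\ldots,k$ contribute $+8k$, while the single index $k+1$ (where $\eta_k$ has coefficient proportional to $k$ and $\eta_l$ has coefficient proportional to $-1$) contributes $-8k$, yielding the required cancellation. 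This is the only point at which the specific coefficients matter, and once it is checked the lemma is complete.
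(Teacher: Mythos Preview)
Your proposal is correct and follows essentially the same approach as the paper: compute $\dim(E^-)=m^2-1$ by subtraction, count $|\mathcal{E}^-|=m^2-1$, and reduce everything to verifying that $\mathcal{E}^+\cup\mathcal{E}^-$ is an orthonormal subset of $S^2_0(V)$ via \eqref{eq g on basis}. The paper merely asserts this last verification is a straightforward computation, whereas you spell out the sign-pattern and the $\eta_k$ orthogonality explicitly, but the argument is the same.
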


\begin{proof}
Since $S^2_0(V)=E^+\oplus E^-$, we have that
\begin{eqnarray*}
    \dim(E^-) &=& \dim(S^2_0(V))-\dim(E^+) \\
    &=& (2m-1)(m+1) -m(m+1)\\
    &=&m^2-1.
\end{eqnarray*}
As the number of traceless symmetric two-tensors in $\mathcal{E}^-$ is equal to $\dim(E^-)$, it suffices to verify that $\mathcal{E}^+ \cup \mathcal{E}^-$ is an orthonormal basis of $S^2_0(V)$, which is a straightforward computation using \eqref{eq g on basis}. 
\end{proof}

We remark that on $(\CP^m, g_{FS})$, $E^+$ is the eigenspace associated with the eigenvalue $4$, and $E^-$ is the eigenspace associated with the eigenvalue $-2$. 
The subspace $E^-$ is spanned by traceless symmetric two-tensors of the form 
$u \odot v +Ju \odot Jv$ with $g(u,v)=0$ and $u\odot u +Ju \odot Ju -v\odot v -Jv\odot Jv$ with $g(u,u)=g(v,v)$. See \cite[page 84]{BK78}.

The next step is to calculate the matrix representation of $\mathring{R}$ with respect to the orthonormal basis $\mathcal{E}^+ \cup \mathcal{E}^-$ for $S^2_0(V)$. We only need the diagonal elements of this matrix. 
\begin{lemma}\label{lemma R basis +}
For the basis $\mathcal{E}^+$ of $E^+\subset S^2_0(V)$ defined in Lemma \ref{lemma basis +}, we have
\begin{equation}\label{eq R positive basis vp psi}
     \mathring{R}(\vp^+_{ij}, \vp^+_{ij}) =\mathring{R}(\psi^+_{ij}, \psi^+_{ij})=  2R(e_i, Je_i, e_j, Je_j)
\end{equation}
for $1\leq i < j \leq m$, and 
\begin{equation}\label{eq R positive basis alpha}
    \mathring{R}(\theta_{i}, \theta_{i}) =\mathring{R}(\theta_{m+i}, \theta_{m+i})= R(e_i,Je_i, e_i, Je_i), 
\end{equation}
for $1\leq i \leq m$.
Moreover, 
\begin{eqnarray}\label{R sum positive eigenvalues}
     \sum_{1\leq i< j \leq m} \left( \mathring{R}(\vp^+_{ij}, \vp^+_{ij}) + \mathring{R}(\psi^+_{ij}, \psi^+_{ij}) \right) + \sum_{i=1}^{2m} \mathring{R}(\theta_{i}, \theta_{i}) 
   %&=& 4 \sum_{1\leq i< j \leq m}  R(e_i, Je_i, e_j, Je_j) +2 \sum_{i=1}^{m} R(e_i,Je_i, e_i, Je_i) \nonumber \\
   = S. %\nonumber
\end{eqnarray}
\end{lemma}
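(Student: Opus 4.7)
The plan is to compute each of the four types of diagonal entries individually by expanding the bilinear form, applying the formula \eqref{eq R on basis}, and then using the K\"ahler symmetries \eqref{eq R J-inv} and the ``bisectional identity'' \eqref{eq bisectional curvature}. The four identities are all of the same flavor: the $\mathcal{E}^+$ basis elements are $J$-symmetrized, so the raw expansion produces terms like $R_{ijij}$ and $R(e_i,Je_j,e_i,Je_j)$ alongside the bisectional term $R(e_i,Je_i,e_j,Je_j)$, and the whole point of \eqref{eq bisectional curvature} is to collapse such a combination.

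I would start with the easier $\theta$-terms. For $\theta_{m+i}=\tfrac{1}{\sqrt2}e_i\odot Je_i$, Lemma \ref{lemma 3.0} gives $\mathring{R}(\theta_{m+i},\theta_{m+i})=\tfrac12\mathring{R}(e_i\odot Je_i,e_i\odot Je_i)=R(e_i,Je_i,e_i,Je_i)$, the term $R(e_i,e_i,Je_i,Je_i)$ vanishing. For $\theta_i=\tfrac{1}{2\sqrt2}(e_i\odot e_i-Je_i\odot Je_i)$, the diagonal entries $\mathring{R}(e_i\odot e_i,e_i\odot e_i)$ and $\mathring{R}(Je_i\odot Je_i,Je_i\odot Je_i)$ vanish by antisymmetry, so only the cross term survives, and \eqref{eq R on basis} gives $\mathring{R}(e_i\odot e_i,Je_i\odot Je_i)=-4R(e_i,Je_i,e_i,Je_i)$, yielding \eqref{eq R positive basis alpha}.

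Next I would handle $\vp^+_{ij}$ and $\psi^+_{ij}$. For $\vp^+_{ij}$, expanding $4\mathring{R}(\vp^+_{ij},\vp^+_{ij})$ produces the two pure diagonal terms and one cross term. The diagonals both equal $2R_{ijij}$ (using full $J$-invariance \eqref{eq R J-inv} for the $Je_i,Je_j$ one), while \eqref{eq R on basis} applied to the cross term and then \eqref{eq R J-inv} together with the first Bianchi identity rewrites $R(e_i,Je_j,Je_i,e_j)$ in terms of $R_{ijij}$ and $R(e_i,Je_i,e_j,Je_j)$. The net result collapses via \eqref{eq bisectional curvature} to $8R(e_i,Je_i,e_j,Je_j)$. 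The computation for $\psi^+_{ij}$ is entirely parallel; its cross term contributes $R_{ijij}$, $R(e_i,e_j,Je_i,Je_j)=R_{ijij}$ (by \eqref{eq R J-inv}), and $R(e_i,Je_j,e_i,Je_j)$, which again reassemble through \eqref{eq bisectional curvature} to $2R(e_i,Je_i,e_j,Je_j)$. This proves \eqref{eq R positive basis vp psi}.

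Finally, for \eqref{R sum positive eigenvalues}, summing the individual identities gives
\begin{equation*}
\sum_{1\le i<j\le m}4R(e_i,Je_i,e_j,Je_j)+\sum_{i=1}^{m}2R(e_i,Je_i,e_i,Je_i)=2\sum_{i,j=1}^{m}R(e_i,Je_i,e_j,Je_j),
\end{equation*}
which equals $S$ by the scalar curvature formula \eqref{eq scalar curvature}. The main obstacle is purely bookkeeping: one must be careful when rewriting cross terms such as $\mathring{R}(e_i\odot e_j, Je_i\odot Je_j)$ and $\mathring{R}(e_i\odot Je_j, Je_i\odot e_j)$, since several distinct applications of \eqref{eq R J-inv} and the first Bianchi identity are needed before \eqref{eq bisectional curvature} can be invoked to produce the clean bisectional output. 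No conceptual difficulty arises, but the sign conventions and ordering of indices must be tracked carefully.
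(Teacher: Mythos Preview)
Your proposal is correct and follows essentially the same approach as the paper: the paper invokes the pre-packaged Lemmas \ref{lemma ijkl} and \ref{lemma ij} (which are themselves just specializations of \eqref{eq R on basis}) to get the raw expressions, and then applies \eqref{eq R J-inv} and \eqref{eq bisectional curvature} exactly as you do. The only cosmetic difference is that you expand directly from \eqref{eq R on basis} rather than quoting those two lemmas, so your computation of the cross terms for $\vp^+_{ij}$ and $\psi^+_{ij}$ is slightly more explicit, but the substance and the final sum via \eqref{eq scalar curvature} are identical.
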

\begin{proof}
Applying Lemma \ref{lemma ijkl} to the orthonormal four-frame $\{e_i,e_j,Je_i,Je_j\}$ yields 
\begin{eqnarray*}
    \mathring{R}(\vp^+_{ij}, \vp^+_{ij}) &=&  \frac{1}{2} \left( R(e_i,e_j,e_i,e_j) +R(Je_i,Je_j,Je_i,Je_j) \right) \\
    && -R(e_i, Je_i, Je_j, e_j) - R(e_i, Je_j, Je_i, e_j) \\
    &=& R(e_i,e_j,e_i,e_j) +R(e_i, Je_i, e_j, Je_j) + R(e_i, Je_j, e_i, Je_j)\\
    &=& 2R(e_i, Je_i, e_j, Je_j),
\end{eqnarray*}
where we have used \eqref{eq R J-inv} and \eqref{eq bisectional curvature}.
Similarly, we get 
\begin{eqnarray*}
    \mathring{R}(\psi^+_{ij}, \psi^+_{ij}) &=&  \frac{1}{2} \left( R(e_i,Je_j,e_i,Je_j) +R(Je_i,e_j,Je_i,e_j) \right) \\
    && +R(e_i, Je_i, e_j, Je_j) + R(e_i, e_j, Je_i, Je_j) \\
    &=&  R(e_i, Je_j, e_i, Je_j)+R(e_i, Je_i, e_j, Je_j) +R(e_i,e_j,e_i,e_j) \\
    &=& 2R(e_i, Je_i, e_j, Je_j).
\end{eqnarray*}
Now, \eqref{eq R positive basis vp psi} is proved. 

For $1\leq i \leq m$, we apply Lemma \ref{lemma ij} to the orthonormal two-frame $\{e_i,Je_i\}$ and get 
\begin{equation*}
 \mathring{R}(\theta_{i}, \theta_{i}) =\mathring{R}(\theta_{m+i}, \theta_{m+i})= R(e_i,Je_i, e_i, Je_i).
\end{equation*}
This proves  \eqref{eq R positive basis alpha}. 
%\begin{eqnarray*}
%    \mathring{R}(\theta_i,\theta_i) &=&  \frac{1}{8} \left(\mathring{R}(e_i\odot e_i, e_i \odot e_i) + \mathring{R}(Je_i\odot Je_i, Je_i \odot Je_i)\right) \\
% && -\frac{1}{4}\mathring{R}(e_i\odot e_i, Je_i \odot Je_i) \\
%    &=& R(e_i, Je_i, e_i, Je_i), 
%\end{eqnarray*}
%and that
%\begin{eqnarray*}
%    \mathring{R}(\theta_{m+i},\theta_{m+i}) &=&  \frac{1}{2}\mathring{R}(e_i\odot Je_i, e_i \odot J e_i)  \\
%    &=& R(e_i, Je_i, e_i, Je_i).
%\end{eqnarray*}
%Now \eqref{eq R positive basis alpha} is proved. 

Finally, \eqref{R sum positive eigenvalues} follows from \eqref{eq R positive basis vp psi}, \eqref{eq R positive basis alpha}, and \eqref{eq scalar curvature}. 
%The proof is complete. 
%\begin{eqnarray}\label{R sum positive eigenvalues}
%   &&  \sum_{1\leq i< j \leq m} \left( \mathring{R}(\vp^+_{ij}, \vp^+_{ij}) + \mathring{R}(\psi^+_{ij}, \psi^+_{ij}) \right) + \sum_{i=1}^{2m} \mathring{R}(\theta_{i}, \theta_{i}) \\
%   &=& 4 \sum_{1\leq i< j \leq m}  R(e_i, Je_i, e_i, Je_j) +2 \sum_{i=1}^{m} R(e_i,Je_i, e_i, Je_i) \nonumber \\
%   &=& S, \nonumber
%\end{eqnarray}
%where we have used \eqref{eq scalar curvature} in the last step. 
\end{proof}

\begin{lemma}\label{lemma R basis -}
For the basis $\mathcal{E}^-$ of $E^-\subset S^2_0(V)$ defined in Lemma \ref{lemma basis -}, we have
\begin{eqnarray}\label{eq sum of negative eigenvaleus}
    \sum_{1\leq i <j \leq m} \left( \mathring{R}(\vp^-_{ij}, \vp^-_{ij}) + \mathring{R}(\psi^-_{ij}, \psi^-_{ij}) \right) +\sum_{k=1}^{m-1} \mathring{R}(\eta_k,\eta_k) 
   % &=& -\frac{m-1}{m} \sum_{1\leq i, j \leq m} R(e_i,Je_i,e_j,Je_j)
    = -\frac{m-1}{2m} S. %\nonumber 
\end{eqnarray}
\end{lemma}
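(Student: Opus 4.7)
The cleanest route is to avoid computing any of the individual diagonal entries on $\mathcal{E}^-$ and instead play the total trace of $\mathring{R}$ against what has already been computed on $\mathcal{E}^+$. Recall from \eqref{trace R} that for any orthonormal basis $\{\vp_i\}_{i=1}^N$ of $S^2_0(V)$,
\begin{equation*}
    \sum_{i=1}^N \mathring{R}(\vp_i,\vp_i)=\frac{n+2}{2n}S,
\end{equation*}
where here $n=2m$, so the right-hand side equals $\frac{m+1}{2m}S$.

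The plan is then as follows. First, I would invoke Lemma \ref{lemma basis -}, which guarantees that $\mathcal{E}^+\cup \mathcal{E}^-$ is an orthonormal basis of $S^2_0(V)$. Applying \eqref{trace R} to this basis gives
\begin{equation*}
    \sum_{\vp\in \mathcal{E}^+} \mathring{R}(\vp,\vp)+\sum_{\vp\in \mathcal{E}^-}\mathring{R}(\vp,\vp)=\frac{m+1}{2m}S.
\end{equation*}
Next I would plug in the identity \eqref{R sum positive eigenvalues} from Lemma \ref{lemma R basis +}, which tells us that the first sum on the left equals $S$. Solving for the remaining sum yields
\begin{equation*}
    \sum_{\vp\in \mathcal{E}^-}\mathring{R}(\vp,\vp)=\frac{m+1}{2m}S-S=-\frac{m-1}{2m}S,
\end{equation*}
which is exactly \eqref{eq sum of negative eigenvaleus}.

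There is no real obstacle here, since everything needed (the trace formula, the decomposition $S^2_0(V)=E^+\oplus E^-$, and the sum on $\mathcal{E}^+$) has already been established. The only thing worth double-checking is the arithmetic: with $n=2m$ one indeed has $\frac{n+2}{2n}=\frac{m+1}{2m}$, and subtracting $S=\frac{2m}{2m}S$ gives $-\frac{m-1}{2m}S$. If one instead wanted an independent verification, one could compute the three pieces directly—$\mathring{R}(\vp^-_{ij},\vp^-_{ij})$ and $\mathring{R}(\psi^-_{ij},\psi^-_{ij})$ by applying Lemma \ref{lemma ijkl} to the orthonormal four-frame $\{e_i,e_j,Je_i,Je_j\}$ (noting the sign flip relative to the $\vp^+_{ij},\psi^+_{ij}$ computation), and $\mathring{R}(\eta_k,\eta_k)$ by expanding in terms of $e_i\odot e_i+Je_i\odot Je_i$ and repeatedly applying \eqref{eq R iiJiJi}—then summing and using the first Bianchi identity together with \eqref{eq scalar curvature}. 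That longer route serves as a sanity check but is not logically necessary.
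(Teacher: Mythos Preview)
Your proposal is correct and follows essentially the same approach as the paper: the paper's proof likewise observes that $\mathcal{E}^+\cup\mathcal{E}^-$ is an orthonormal basis of $S^2_0(V)$ and derives \eqref{eq sum of negative eigenvaleus} immediately from the trace identity \eqref{trace R} together with \eqref{R sum positive eigenvalues}. Your arithmetic check $\frac{n+2}{2n}=\frac{m+1}{2m}$ and the subtraction are exactly right.
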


\begin{proof}
Since $\mathcal{E}^+ \cup \mathcal{E}^-$ is an orthonormal basis for $S^2_0(V)$, 
\eqref{eq sum of negative eigenvaleus} follows immediately from and \eqref{R sum positive eigenvalues}, \eqref{trace R} and \eqref{eq scalar curvature}. 
\end{proof}

\section{Flatness}
We prove Theorem \ref{thm flat} in this section. The key ingredient is 
\begin{proposition}\label{prop flat}
    Let $R$ be a K\"ahler algebraic curvature operator on a complex Euclidean vector space $(V,g,J)$ of complex dimension $m\geq 2$. 
    \begin{enumerate}
        \item If $R$ has $\frac{3}{2}(m^2-1)$-nonnegative (respectively, $\frac{3}{2}(m^2-1)$-nonpositive) curvature operator of the second kind, then $R$ has constant nonnegative (respectively, nonpositive) holomorphic sectional curvature. 
       \item If $R$ has $\a$-nonnegative (respectively, $\a$-nonpositive) curvature operator of the second kind for some $\a < \frac{3}{2}(m^2-1)$, then $R$ is flat.
    \end{enumerate}
\end{proposition}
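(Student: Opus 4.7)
The plan is an averaging argument on the orthonormal basis $\mathcal{E}^+\cup\mathcal{E}^-$ of $S_0^2(V)$ furnished by Lemmas \ref{lemma basis +} and \ref{lemma basis -}. The key numerical inputs are $|\mathcal{E}^-|=m^2-1$ with $\sum_{\vp\in\mathcal{E}^-}\mathring{R}(\vp,\vp)=-\frac{m-1}{2m}S$ and $|\mathcal{E}^+|=m(m+1)$ with $\sum_{\vp\in\mathcal{E}^+}\mathring{R}(\vp,\vp)=S$. In particular, the average of $\mathring{R}(\vp,\vp)$ on $\mathcal{E}^+$ is $\frac{S}{m(m+1)}$, and the critical value $\alpha=\frac{3}{2}(m^2-1)=(m^2-1)+\frac{m^2-1}{2}$ is exactly what is needed so that ``$\mathcal{E}^-$ at full weight plus total weight $\frac{m^2-1}{2}$ drawn uniformly from $\mathcal{E}^+$'' has expected total $0$.

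For Part (1), fix an orthonormal $J$-basis $\{e_1,Je_1,\ldots,e_m,Je_m\}$ and form $\mathcal{E}^+\cup\mathcal{E}^-$. When $m$ is odd, $\alpha$ is an integer, and I would apply $\alpha$-nonnegativity to the $\alpha$ orthonormal vectors $\mathcal{E}^-\cup A$ for each $k$-subset $A\subseteq\mathcal{E}^+$ with $k=\frac{m^2-1}{2}$. When $m$ is even, $\{\alpha\}=\frac{1}{2}$, and I would apply it to $\mathcal{E}^-\cup A\cup\{\psi\}$ where $|A|=\frac{m^2-2}{2}$ at weight $1$ and $\psi\in\mathcal{E}^+\setminus A$ at weight $\frac{1}{2}$. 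A short counting argument shows that upon averaging over all admissible choices, every $\vp\in\mathcal{E}^+$ receives average weight $\frac{m-1}{2m}$, so the mean inequality reduces to $-\frac{m-1}{2m}S+\frac{m-1}{2m}S=0\geq 0$. Since each summand is $\geq 0$ while their mean is $0$, every individual inequality must in fact be an equality.

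These equalities say that $\sum_{\vp\in A}\mathring{R}(\vp,\vp)$ (plus $\frac{1}{2}\mathring{R}(\psi,\psi)$ in the even case) takes the same value for every admissible $(A,\psi)$. Invoking the elementary fact that if the sum of a real-valued function over all $k$-subsets of a finite set is constant (with $1\leq k\leq |\mathcal{E}^+|-1$, which holds for all $m\geq 2$), then the function is constant, I would conclude $\mathring{R}(\vp,\vp)=c$ on all of $\mathcal{E}^+$; in the even case one first fixes $\psi$, applies the fact on $\mathcal{E}^+\setminus\{\psi\}$, and then varies $\psi$. By Lemma \ref{lemma R basis +} this forces $R(e_i,Je_i,e_i,Je_i)=c$ and $R(e_i,Je_i,e_j,Je_j)=c/2$ for all $j\neq i$, with $c=S/(m(m+1))$ from \eqref{R sum positive eigenvalues}. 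Since the $J$-basis is arbitrary and $e_1$ may be any unit $X\in V$, this gives $R(X,JX,X,JX)=c|X|^4$ for every $X$, i.e., constant holomorphic sectional curvature $c$; the sign $c\geq 0$ follows from $S\geq 0$ via Lemma \ref{lemma R and S}. The nonpositive case is symmetric (replace $R$ by $-R$).

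Part (2) is then immediate. The hypothesis $\alpha<\frac{3}{2}(m^2-1)$ yields $\frac{3}{2}(m^2-1)$-nonnegativity by monotonicity in $\alpha$, so Part (1) produces $R$ of constant holomorphic sectional curvature $c\geq 0$. If $c>0$, then the spectrum of $\mathring{R}$ is $-c/2$ with multiplicity $m^2-1$ and $c$ with multiplicity $m(m+1)$, and a direct computation gives the weighted sum of the smallest $\alpha$ eigenvalues as $c(\alpha-\frac{3}{2}(m^2-1))<0$, contradicting $\alpha$-nonnegativity. Hence $c=0$, so $S=cm(m+1)=0$, and Lemma \ref{S flat implies R flat} concludes that $R=0$. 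The step I anticipate as the main obstacle is the bookkeeping in the averaging identity, namely showing that the expected weight $\frac{m-1}{2m}=\frac{k+\{\alpha\}}{m(m+1)}$ holds uniformly across both parities of $m$; once this combinatorial check is in hand, the swap step and the deduction of constant holomorphic sectional curvature become mechanical.
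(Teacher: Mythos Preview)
Your argument is correct and follows essentially the same route as the paper: both use the basis $\mathcal{E}^+\cup\mathcal{E}^-$ together with the identities $\sum_{\mathcal{E}^-}\mathring{R}(\vp,\vp)=-\tfrac{m-1}{2m}S$ and $\sum_{\mathcal{E}^+}\mathring{R}(\vp,\vp)=S$, and an averaging step to force all diagonal values on $\mathcal{E}^+$ to coincide. The only cosmetic difference is that the paper packages your combinatorial averaging into Lemma~\ref{lemma average}: rather than summing over all $k$-subsets and invoking a swap argument, the paper applies $\alpha$-nonnegativity once with $\mathcal{E}^-$ first and the elements of $\mathcal{E}^+$ ordered by increasing $\mathring{R}$-value, obtaining $\tfrac{m-1}{2m}S\le f(A,\tfrac{m^2-1}{2})\le \tfrac{m-1}{2m}S$, and then reads off ``all values equal'' from the equality case of that lemma. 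Your version is a valid alternative derivation of the same equality characterization; note also that once $c=0$ you already have $R=cR_{\CP^m}=0$, so the appeal to Lemma~\ref{S flat implies R flat} in Part~(2) is redundant.
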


We need an elementary lemma.
\begin{lemma}\label{lemma average}
Let $N$ be a positive integer and $A$ be a collection of $N$ real numbers.  Denote by $a_i$ the $i$-th smallest number in $A$ for $1\leq i \leq N$.  
Define a function $f(A,x)$ by 
   \begin{equation*}
       f(A,x)=\sum_{i=1}^{\lfloor x \rfloor} a_i +(x-\lfloor x \rfloor) a_{\lfloor x \rfloor+1}, 
   \end{equation*}
   for $x\in [1,N]$. Then we have
   \begin{equation}\label{eq function}
       f(A,x) \leq x \bar{a}, 
   \end{equation}
   where $\bar{a}:=\frac{1}{N}\sum_{i=1}^N a_i$ is the average of all numbers in $A$. 
   Moreover, the equality holds for some $x\in [1,N)$ if and only if $a_i=\bar{a}$ for all $1\leq i\leq N$. 
\end{lemma}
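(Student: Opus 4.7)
The plan is to show the inequality by establishing concavity of the deficit function and to extract the equality case from its one-sided slopes at an alleged interior zero.

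Set $h(x) := x\bar{a} - f(A,x)$ on $[1,N]$; my goal is to show $h \geq 0$ there, with equality at some $x_0 \in [1,N)$ only when all $a_i$ coincide with $\bar{a}$. A direct check shows that $h$ is continuous, and affine on each subinterval $[k,k+1]$ with slope $\bar{a} - a_{k+1}$, for $k = 1, \ldots, N-1$. Because $a_{k+1}$ is non-decreasing in $k$ by the hypothesis that $a_i$ is the $i$-th smallest, these slopes are non-increasing, so $h$ is concave on $[1,N]$. At the endpoints, $h(1) = \bar{a} - a_1 \geq 0$ (the smallest element is at most the average) and $h(N) = N\bar{a} - \sum_{i=1}^N a_i = 0$. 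A concave function on a closed interval attains its minimum at an endpoint, so $h \geq \min(h(1), h(N)) = 0$ throughout $[1,N]$, which is \eqref{eq function}.

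For the equality case, suppose $h(x_0) = 0$ for some $x_0 \in [1,N)$. The case $x_0 = 1$ gives $a_1 = \bar{a}$; combined with $a_i \geq a_1 = \bar{a}$ and $\sum (a_i - \bar{a}) = 0$, this forces every $a_i = \bar{a}$. For $x_0 \in (1,N)$, the concavity inequality $h(1) \leq h(x_0) + m(1-x_0)$, where $m$ is the left slope of $h$ at $x_0$, combined with $h(1) \geq 0$ and $1 - x_0 < 0$, forces $m \leq 0$. Since the slopes of $h$ are non-increasing, every slope to the right of $x_0$ is also $\leq 0$, so $h$ is non-increasing on $[x_0, N]$. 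Together with $h(x_0) = h(N) = 0$ and $h \geq 0$, this pins $h \equiv 0$ on $[x_0, N]$, so every slope there vanishes, giving $a_{\lfloor x_0 \rfloor + 1} = \cdots = a_N = \bar{a}$. The remaining $a_i$ with $i \leq \lfloor x_0 \rfloor$ satisfy $a_i \leq \bar{a}$ by monotonicity, and the identity $\sum a_i = N\bar{a}$ forces their sum to equal $\lfloor x_0 \rfloor \bar{a}$, so they too equal $\bar{a}$. The converse is trivial: if all $a_i = \bar{a}$, then $f(A,x) = x\bar{a}$ identically.

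The entire argument is one-variable convex/concave analysis on a piecewise-linear function, and I do not anticipate any serious obstacle. The only care needed is in handling the boundary case $x_0 = 1$ separately and in interpreting one-sided slopes at break points, both of which are routine.
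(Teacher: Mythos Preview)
Your proof is correct and takes a genuinely different route from the paper's. The paper first establishes $f(A,k)\le k\bar a$ at each integer $k$ by the direct estimate
\[
N\bigl(f(A,k)-k\bar a\bigr)=(N-k)\sum_{i\le k}a_i-k\sum_{i>k}a_i\le (N-k)k\,a_{k+1}-k(N-k)\,a_{k+1}=0,
\]
and then linearly interpolates between $\lfloor x\rfloor$ and $\lfloor x\rfloor+1$; the equality case falls out immediately from the integer analysis, since equality at $k$ forces $a_1=\cdots=a_{k+1}$ and $a_{k+1}=\cdots=a_N$. Your argument instead packages the monotonicity of $a_i$ as concavity of $h(x)=x\bar a-f(A,x)$ and reads off $h\ge 0$ from the endpoint values. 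This gives a cleaner, more structural proof of the inequality. The trade-off is that your equality analysis is a bit longer, requiring the one-sided slope argument to pin $h\equiv 0$ on $[x_0,N]$, whereas the paper's integer computation yields the rigidity in one line. Both are entirely elementary; yours has the advantage of making clear \emph{why} the inequality holds (the deficit is concave with nonnegative boundary data), while the paper's makes the equality characterization more transparent.
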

\begin{proof}
%This is elementary. 
We first show that Lemma \ref{lemma average} holds when $x=k$ is an integer. This is obvious if $k=1$ or $k=N$. 
If $2\leq k \leq N-1$, we have 
\begin{eqnarray*}
   N(f(A,k)-k\bar{a}) 
  &=& N\sum_{i=1}^{k} a_i - k \sum_{i=1}^N a_i \\
  &=& (N-k)\sum_{i=1}^{k} a_i -k \sum_{i=k+1}^N a_i .
\end{eqnarray*}
Note that $\sum_{i=1}^{k} a_i \leq ka_{k+1}$ with equality if and only if $a_1=\cdots =a_{k+1}$ and $\sum_{i=k+1}^N a_i \geq (N-k)a_{k+1}$ with equality if and only if $a_{k+1}=\cdots =a_{N}$. So, we get 
\begin{equation*}
    N(f(A,k)-k\bar{a}) 
  \leq  (N-k)k a_{k+1} -k(N-k)a_{k+1}  = 0.
\end{equation*}
Moreover, the equality holds if and only if $a_i=\bar{a}$ for all $1\leq i\leq N$. 

Next, for $x \in (1,N)$, we have 
\begin{eqnarray*}
  f(A,x) &=& \sum_{i=1}^{\lfloor x \rfloor} a_i +(x-\lfloor x \rfloor) a_{\lfloor x \rfloor+1} \\
  &=& (x-\lfloor x \rfloor)\sum_{i=1}^{\lfloor x \rfloor+1} a_i +(1-x+\lfloor x \rfloor) \sum_{i=1}^{\lfloor x \rfloor} a_i \\
  &=& (x-\lfloor x \rfloor) f(A, \lfloor x \rfloor+1)  +(1-x+\lfloor x \rfloor) f(A,\lfloor x \rfloor) \\
  &\leq & (x-\lfloor x \rfloor) (\lfloor x \rfloor+1)\bar{a}  +(1-x+\lfloor x \rfloor) \lfloor x \rfloor \bar{a} \\
  &=& x \bar{a}. 
\end{eqnarray*}
Here we have used $f(A, \lfloor x \rfloor+1) \leq (\lfloor x \rfloor+1) \bar{a}$ and $f(A,\lfloor x \rfloor) \leq \lfloor x \rfloor \bar{a}$ in getting the inequality step.

If the equality holds in \eqref{eq function} for $x\in [1,N)$, then we must have $f(A, \lfloor x \rfloor+1) = (\lfloor x \rfloor+1) \bar{a}$ and $f(A,\lfloor x \rfloor) = \lfloor x \rfloor \bar{a}$, which implies $a_i=\bar{a}$ for all $1\leq i\leq N$. This completes the proof. 
\end{proof}

We are now ready to prove Proposition \ref{prop flat}. 
\begin{proof}[Proof of Proposition \ref{prop flat}]
(1). Let $\{e_1, \cdots, e_m, Je_1, \cdots, Je_m\}$ be an orthonormal basis of $V$ and let $\mathcal{E}^+\cup \mathcal{E}^-$ be the orthonormal basis of $S^2_0(V)$ constructed in Lemma \ref{lemma basis +} and Lemma \ref{lemma basis -}. 

Let $A$ be the collection of the values $\mathring{R}(\theta_i,\theta_i)$ for $1\leq i \leq 2m$, $\mathring{R}(\vp^{+}_{ij},\vp^{+}_{ij})$ and $\mathring{R}(\psi^{+}_{ij},\psi^{+}_{ij})$ for $1\leq i < j \leq m$.
By \eqref{R sum positive eigenvalues}, the sum of all values in $A$ is equal to $S$, and $\bar{a}$, the average of all values in $A$, is given by 
\begin{equation*}
    \bar{a}=\frac{S}{m(m+1)}.
\end{equation*}
By Lemma \ref{lemma average}, we have
\begin{equation}\label{eq f1}
    f\left(A,\frac{1}{2}(m^2-1)\right) \leq \frac{1}{2}(m^2-1) \bar{a} =\frac{m-1}{2m}S,
\end{equation}
where $f(A,x)$ is the function defined in Lemma \ref{lemma average}. 

If $R$ has $\frac{3}{2}(m^2-1)$-nonnegative curvature operator of the second kind, then
\begin{eqnarray*}
 0 \leq \sum_{1\leq i <j \leq m} \left( \mathring{R}(\vp^-_{ij}, \vp^-_{ij}) + \mathring{R}(\psi^-_{ij}, \psi^-_{ij}) \right) +\sum_{k=1}^{m-1} \mathring{R}(\eta_k,\eta_k)  +f\left(A,\frac{1}{2}(m^2-1)\right).
\end{eqnarray*}
Substituting \eqref{eq sum of negative eigenvaleus} into the above inequality yields 
\begin{eqnarray}\label{eq 4.1}
 \frac{m-1}{2m} S   \leq    f\left(A,\frac{1}{2}(m^2-1)\right).
\end{eqnarray}
Therefore, \eqref{eq f1} must hold as equality and we conclude from Lemma \ref{lemma average} that all values in $A$ are equal to $\frac{S}{m(m+1)}$. 
In particular,  
\begin{equation*}
    R(e_i,Je_i,e_i,Je_i)=\frac{S}{m(m+1)}
\end{equation*}
for all $1\leq i \leq m$. 
Since the orthonormal basis $\{e_1,\cdots, e_m, Je_1, \cdots, Je_m\}$ is arbitrary, we have 
\begin{equation*}
    R(X,JX,X,JX)=\frac{S}{m(m+1)}
\end{equation*}
for any unit vector $X\in V$. Hence, $R$ has constant holomorphic sectional curvature. Finally, we notice that $S\geq 0$ by Lemma \ref{lemma R and S}.

If $\mathring{R}$ is $\frac{3}{2}(m^2-1)$ nonpositive,  we apply the proved result to $-R$ and get that $R$ has constant nonpositive holomorphic sectional curvature.  

(2). $R$ has constant holomorphic sectional curvature by part (1). Thus, $R=c R_{\CP^m}$ for some $c\in \R$, where $R_{\CP^m}$ denotes the Riemann curvature tensor of $ (\CP^m, g_{FS})$. 
The desired conclusion follows from the fact that $c R_{\CP^m}$ has $\a$-nonnegative or $\a$-nonpositive curvature operator of the second kind for some $\a < \frac{3}{2}(m^2-1)$ if and only if $c=0$ (see \cite{BK78}). 

Alternatively, one can argue as follows. If $\mathring{R}$is $\left(\frac{3}{2}(m^2-1)-\epsilon\right)$-nonnegative for some $1>\epsilon >0$, then we would get in \eqref{eq 4.1} that
\begin{equation*}
 0   \leq   -\frac{m-1}{2m} S  + f\left(A,\frac{1}{2}(m^2-1)-\epsilon \right) 
 \leq   \frac{-\epsilon \ S}{m(m+1)}. 
\end{equation*}
So, we have $S=0$. By Lemma \ref{S flat implies R flat}, $R=0$. 
Similarly, $\mathring{R}$ is $\a$-nonpositive for some $\a < \frac{3}{2}(m^2-1)$ if and only if $R=0$. 
\end{proof}

We now prove Theorem \ref{thm flat} and its corollaries. 

\begin{proof}[Proof of Theorem \ref{thm flat}]
Theorem \ref{thm flat} follows immediately from Proposition \ref{prop flat} and  Schur's lemma for K\"ahler manifolds (see for instance \cite[Theorem 7.5]{KN69}).
\end{proof}

\begin{proof}[Proof of Corollary \ref{cor 1}]
%We will show that there are no K\"ahler algebraic curvature operators such that 
If $\mathring{R}$ is $\a$-positive or $\a$-negative for some $\a\leq \frac{3}{2}(m^2-1)$, then $R$ must have constant holomorphic sectional curvature by Proposition \ref{prop flat}. Thus, $R=c R_{\CP^m}$ for some $c\in \R$.
This contradicts the fact that $c R_{\CP^m}$ does not have $\a$-positive or $\a$-nonnegative curvature operator of the second kind for any $\a\leq \frac{3}{2}(m^2-1)$. 
\end{proof}

\begin{proof}[Proof of Corollary \ref{cor 3}]
By Theorem \ref{thm flat}, $M$ has constant holomorphic sectional curvature. The conclusion follows from the the classification of complete simply-connected K\"ahler manifolds with constant holomorphic sectional curvature. 
\end{proof}

\section{Orthogonal Bisectional Curvature}
Throughout this section, $\a_m$ is the number defined in \eqref{eq alpha m def}, i.e., 
\begin{equation*}
        \a_m=\frac{3m^3-m+2}{2m}.
\end{equation*}
We restate part (2) of Theorem \ref{thm algebra R} as two propositions. 
\begin{proposition}\label{prop OB}
    Let $R$ be a K\"ahler algebraic curvature operator on a complex Euclidean vector space $(V,g,J)$ of complex dimension $m\geq 2$.  
    If $R$ has $\a_m$-nonnegative (respectively, $\a_m$-positive, $\a_m$-nonpositive, $\a_m$-negative) curvature operator of the second kind, then $R$ has nonnegative (respectively, positive, nonpositive, negative) orthogonal bisectional curvature. 
\end{proposition}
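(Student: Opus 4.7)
The plan is to reduce Proposition \ref{prop OB} to the algebraic statement on $(V,g,J)$ and then isolate the orthogonal bisectional curvature $B := R(e_1, Je_1, e_2, Je_2)$ using a carefully chosen subset of $\alpha_m$ basis elements of $S^2_0(V)$. By homogeneity, fix unit vectors $e_1, e_2 \in V$ with $g(e_1, e_2) = g(e_1, Je_2) = 0$, extend to an orthonormal basis $\{e_1, Je_1, \ldots, e_m, Je_m\}$ of $V$, and use Lemmas \ref{lemma basis +}--\ref{lemma basis -} to form the orthonormal basis $\mathcal{E}^+ \cup \mathcal{E}^-$ of $S^2_0(V)$. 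The goal then reduces to showing $B \geq 0$ (resp.\ $> 0$); the nonpositive and negative cases follow by applying the result to $-R$.

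The key structural observation is that the $4$-dimensional subspace $\mathcal{W} := \spann\{\varphi^{\pm}_{12}, \psi^{\pm}_{12}\}$ has $\tr(\mathring{R}|_\mathcal{W}) = 2B$. Indeed, Lemma \ref{lemma R basis +} gives $\mathring{R}(\varphi^+_{12}, \varphi^+_{12}) + \mathring{R}(\psi^+_{12}, \psi^+_{12}) = 4B$, while a direct computation via Lemma \ref{lemma ijkl} together with the K\"ahler identities \eqref{eq R J-inv}--\eqref{eq bisectional curvature} yields $\mathring{R}(\varphi^-_{12}, \varphi^-_{12}) + \mathring{R}(\psi^-_{12}, \psi^-_{12}) = -2B$.

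Next, complete to an orthonormal basis of $S^2_0(V)$ by diagonalizing $\mathring{R}$ on the complement $\mathcal{S} := \mathcal{W}^\perp$. A useful additional fact is that the decomposition $\mathcal{S} = (\mathcal{S} \cap E^+) \oplus (\mathcal{S} \cap E^-)$ is $\mathring{R}$-orthogonal, which follows from the $U(m)$-equivariance of the K\"ahler operator $\mathring{R}$ together with Schur's lemma applied to the irreducible $U(m)$-representations $E^+$ and $E^-$. Apply $\alpha_m$-nonnegativity to the subset consisting of the four elements of $\mathcal{W}$, all $m^2 - 3$ eigenvectors in $\mathcal{S} \cap E^-$ (whose $\mathring{R}$-trace is $-\tfrac{m-1}{2m}S + 2B$ by Lemma \ref{lemma R basis -}), and $\tfrac{(m-1)^2(m+2)}{2m}$ suitably chosen eigenvectors in $\mathcal{S} \cap E^+$ (with a fractional weight on the last one if $\alpha_m$ is non-integer); the element count totals $\alpha_m$. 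Using Lemmas \ref{lemma R basis +}--\ref{lemma R basis -} together with the identity \eqref{eq R iiJiJi}, one aims to express the resulting diagonal sum as $\tfrac{2(m+1)}{m} B + (\text{nonnegative remainder})$, modeled on the $(\CP^m, g_{FS})$ eigenvalue pattern. The hypothesis then forces $B \geq 0$.

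The main obstacle is this last step: identifying the $\tfrac{(m-1)^2(m+2)}{2m}$ eigenvectors in $\mathcal{S} \cap E^+$ so that the sum cleanly decomposes in the claimed way. The threshold $\alpha_m = \tfrac{3m^3 - m + 2}{2m}$ is dictated by matching the $(\CP^m, g_{FS})$ eigenvalue pattern, where all $m^2 - 1$ eigenvalues of $\mathring{R}$ restricted to $E^-$ equal $-2$ while those restricted to $E^+$ equal $+4$, and $\alpha_m$ is precisely the breakpoint at which the weighted sum of the smallest eigenvalues becomes a positive multiple of the (constant) orthogonal bisectional curvature. As the paper notes, this is not expected to yield the sharp constant $C_m$ for $m \geq 3$.
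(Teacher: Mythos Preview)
Your plan is essentially the paper's and it works; the ``main obstacle'' you flag is not one. The trace of $\mathring{R}$ over $\mathcal{S}\cap E^+$ equals $S-4B$ (the full $E^+$ trace $S$ from \eqref{R sum positive eigenvalues} minus the $4B$ already extracted into $\mathcal{W}$), and $\dim(\mathcal{S}\cap E^+)=(m-1)(m+2)$, so Lemma~\ref{lemma average} bounds the relevant partial sum by $\tfrac{m-1}{2m}(S-4B)$. Adding your $2B$ from $\mathcal{W}$ and $-\tfrac{m-1}{2m}S+2B$ from $\mathcal{S}\cap E^-$ gives exactly $\tfrac{2(m+1)}{m}B$, with no remainder; $\alpha_m$-nonnegativity then forces $B\ge 0$. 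The paper runs the same computation with only a cosmetic regrouping: it keeps all of $\mathcal{E}^-$ together (sum $-\tfrac{m-1}{2m}S$) rather than peeling off $\varphi^-_{12},\psi^-_{12}$, and it splits the remaining $\mathcal{E}^+$ block into $\{\theta_i\}_{i=1}^m$ and $\{\varphi^+_{ij},\psi^+_{ij}:(i,j)\ne(1,2)\}$, applying Lemma~\ref{lemma average} to each separately. The resulting upper bound is identical.

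One genuine error: your Schur's lemma justification is wrong. The operator $\mathring{R}$ for a \emph{fixed} K\"ahler $R$ is not $U(m)$-equivariant; that would force $R$ to have constant holomorphic sectional curvature. The $\mathring{R}$-orthogonality of $E^+$ and $E^-$ happens to be true anyway, because for K\"ahler $R$ the operator $\overline{R}$ commutes with the involution $\varphi\mapsto\varphi(J\cdot,J\cdot)$, whose $\mp 1$-eigenspaces on $S^2_0(V)$ are precisely $E^\pm$. But you do not need this fact, nor any diagonalization: simply work with the explicit basis $\mathcal{E}^+\cup\mathcal{E}^-$ throughout (as the paper does) and apply Lemma~\ref{lemma average} directly to the known diagonal values on $\mathcal{E}^+\setminus\{\varphi^+_{12},\psi^+_{12}\}$.
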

\begin{proposition}\label{prop H}
    Let $R$ be a K\"ahler algebraic curvature operator on a complex Euclidean vector space $(V,g,J)$ of complex dimension $m\geq 2$. If $R$ has $\a_m$-nonnegative (respectively, $\a_m$-positive, $\a_m$-nonpositive, $\a_m$-negative) curvature operator of the second kind, then $R$ has nonnegative (respectively, positive, nonpositive, negative) holomorphic sectional curvature. 
\end{proposition}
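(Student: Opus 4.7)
My plan is to adapt the ``test against a good orthonormal set'' strategy used for the other parts of Theorem \ref{thm algebra R}, with $\CP^m$ as the model space, so as to extract the holomorphic sectional curvature $H_1:=R(e_1,Je_1,e_1,Je_1)$. Concretely, I fix a unit $X\in V$, extend to a $J$-compatible orthonormal basis $\{e_1=X,Je_1,\ldots,e_m,Je_m\}$ of $V$, and form the orthonormal basis $\mathcal{E}^+\cup\mathcal{E}^-$ of $S^2_0(V)$ from Lemmas \ref{lemma basis +} and \ref{lemma basis -}. The natural ``core'' orthonormal set is $\mathcal{C}:=\mathcal{E}^-\cup\{\theta_1,\theta_{m+1}\}$: it has $m^2+1$ elements, and by Lemmas \ref{lemma R basis +} and \ref{lemma R basis -} its $\mathring R$-sum equals
\[
-\frac{m-1}{2m}S+2H_1=\frac{m+1}{m}H_1-\frac{m-1}{m}\sum_{i\geq 2}H_i-\frac{2(m-1)}{m}\sum_{i<j}B_{ij},
\]
where $H_i:=R(e_i,Je_i,e_i,Je_i)$ and $B_{ij}:=R(e_i,Je_i,e_j,Je_j)$. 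To reach $\alpha_m$ elements I must append $\tfrac{(m-1)^2(m+2)}{2m}$ (generally fractional) units of orthonormal weight from $\mathcal{E}^+\setminus\{\theta_1,\theta_{m+1}\}$ arranged so that the $H_i$ ($i\geq 2$) and $B_{ij}$ terms cancel.

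The key computational inputs are the vanishing identities $\mathring R(\theta_i,\theta_{m+i})=0$ and $\mathring R(\vp^+_{ij},\psi^+_{ij})=0$, which follow directly from Lemma \ref{lemma 3.0} combined with \eqref{eq R J-inv}, the first Bianchi identity, and antisymmetry. As a consequence, every unit vector in $\spn\{\theta_i,\theta_{m+i}\}$ has $\mathring R$-value exactly $H_i$, and every unit vector in $\spn\{\vp^+_{ij},\psi^+_{ij}\}$ has $\mathring R$-value exactly $2B_{ij}$. These ``averaged'' unit vectors are the building blocks for fine-tuning the extension of the core. For $m=2$ (where $\alpha_m=6$ is an integer) this already yields a complete argument: the two orthonormal $6$-sets
\[
\mathcal{C}\cup\{\tfrac{1}{\sqrt 2}(\theta_2+\theta_4)\}\quad\text{and}\quad \mathcal{C}\cup\{\tfrac{1}{\sqrt 2}(\vp^+_{12}+\psi^+_{12})\}
\]
have $\mathring R$-sums $\tfrac{3H_1+H_2}{2}-B_{12}$ and $\tfrac{3H_1-H_2}{2}+B_{12}$, so applying $6$-nonnegativity to each and adding gives $3H_1\geq 0$, hence $H_1\geq 0$.

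For general $m$, I would build analogous $\alpha_m$-subsets indexed by ``extension type'' (one for each $\theta$-pair $i\geq 2$, appending $\tfrac{1}{\sqrt 2}(\theta_i+\theta_{m+i})$; one for each $\vp\psi$-pair $i<j$, appending $\tfrac{1}{\sqrt 2}(\vp^+_{ij}+\psi^+_{ij})$; together with enough additional orthogonal elements of $\mathcal{E}^+$ to reach $\alpha_m$), and sum the resulting $\alpha_m$-nonnegativity inequalities with positive coefficients chosen so that the $H_i$ ($i\geq 2$) and $B_{ij}$ contributions cancel, leaving a positive multiple of $H_1$. Varying $X$ then yields nonnegative holomorphic sectional curvature; the positive, nonpositive, and negative cases follow by applying the same argument to $\pm R$ with strict or reversed inequalities. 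The main obstacle is the combinatorial/linear-algebraic problem for $m\geq 3$: the extension needs $\tfrac{(m-1)^2(m+2)}{2m}$ worth of weight distributed across $(m-1)+\binom{m}{2}$ pairs, the value $\alpha_m=\tfrac{3m^3-m+2}{2m}$ is non-integer (its fractional part must be absorbed via the weighted form of the $\alpha$-nonnegativity condition), and one must verify that the coefficients can be chosen nonnegative so that \emph{all} non-$H_1$ curvature terms cancel simultaneously.
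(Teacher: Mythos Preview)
Your setup is exactly the paper's: same core $\mathcal{C}=\mathcal{E}^-\cup\{\theta_1,\theta_{m+1}\}$, same decomposition $\alpha_m=(m^2-1)+2+\tfrac{(m-1)^2(m+2)}{2m}$, and the $m=2$ argument is correct. The gap is the step you flag yourself as ``the main obstacle'': for $m\geq 3$ you propose to build several $\alpha_m$-sets and average the resulting inequalities with well-chosen coefficients so that all $H_i$ ($i\geq 2$) and $B_{ij}$ cancel. You never carry this out, and the combinatorics you describe (distributing fractional weight across $(m-1)+\binom{m}{2}$ pairs, matching the fractional part of $\alpha_m$, choosing nonnegative coefficients) is exactly the part that needs an argument.

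The paper bypasses this entirely with Lemma~\ref{lemma average}. Let $A$ be the multiset of the $(m-1)(m+2)$ diagonal values $\mathring{R}(\theta_i,\theta_i)$ for $i\in\{2,\dots,m,m+2,\dots,2m\}$ together with $\mathring{R}(\vp^+_{ij},\vp^+_{ij})$, $\mathring{R}(\psi^+_{ij},\psi^+_{ij})$ for all $1\leq i<j\leq m$. Its average is $\bar a=\tfrac{S-2H_1}{(m-1)(m+2)}$. Apply $\alpha_m$-nonnegativity to the \emph{single} orthonormal basis $\mathcal{E}^+\cup\mathcal{E}^-$, ordered as $\mathcal{E}^-$, then $\theta_1,\theta_{m+1}$, then the remaining elements of $\mathcal{E}^+$ listed in increasing $\mathring{R}$-value. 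The contribution of the last block is precisely $f\bigl(A,\tfrac{(m-1)^2(m+2)}{2m}\bigr)$, and Lemma~\ref{lemma average} bounds it by $\tfrac{(m-1)^2(m+2)}{2m}\cdot\bar a=\tfrac{m-1}{2m}(S-2H_1)$. Combined with \eqref{eq sum of negative eigenvaleus} and \eqref{eq R positive basis alpha} this gives
\[
0\leq -\tfrac{m-1}{2m}S+2H_1+\tfrac{m-1}{2m}(S-2H_1)=\tfrac{m+1}{m}H_1,
\]
in one stroke for all $m$. The vanishing identities $\mathring{R}(\theta_i,\theta_{m+i})=0$ and $\mathring{R}(\vp^+_{ij},\psi^+_{ij})=0$, while true, are not needed; nor is any averaging over multiple test sets. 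The single inequality $f(A,x)\leq x\bar a$ is doing exactly the cancellation you were trying to engineer by hand.
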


\begin{proof}[Proof of Proposition \ref{prop OB}] 
Let $\{e_1, \cdots, e_m, Je_1, \cdots, Je_m\}$ be an orthonormal basis of $V$ and let $\mathcal{E}^+\cup \mathcal{E}^-$ be the orthonormal basis of $S^2_0(V)$ constructed in Lemma \ref{lemma basis +} and Lemma \ref{lemma basis -}. 

Let $A$ be the collection of the values $\mathring{R}(\theta_i,\theta_i)$ for $1\leq i \leq m$. 
By Lemma \ref{lemma R basis +}, $A$ consists of one copy of $R(e_i,Je_i,e_i,Je_i)$ for each $1\leq i\leq m$, and its average $\bar{a}$ is given by 
\begin{equation*}
\bar{a}=\frac{1}{m}\sum_{i=1}^m R(e_i,Je_i,e_i,Je_i).
\end{equation*}

Let $B$ be the collection of the values $\mathring{R}(\vp^{+}_{ij},\vp^{+}_{ij})$ and $\mathring{R}(\psi^{+}_{ij},\psi^{+}_{ij})$ for $1\leq i < j \leq m$ with $(i,j)\neq (1,2)$. According to Lemma \ref{lemma R basis +}, $B$ is made of two copies of $2R(e_i,Je_i,e_j,Je_j)$ for each $1\leq i < j \leq m$ with $(i,j)\neq (1,2)$. 
The average of all values in $B$, denoted by $\bar{b}$, is given by 
\begin{equation*}
    \bar{b}=\frac{4}{(m-2)(m+1)}\left( \sum_{1\leq i< j\leq m}^m R(e_i,Je_i,e_j,Je_j) - R(e_1,Je_1,e_2,Je_2)\right).
\end{equation*}

Next, let $f(A,x)$ and $f(B,x)$ be defined as in Lemma \ref{lemma average}. By Lemma \ref{lemma average}, we have 
\begin{equation}\label{eq a bar}
    f(A,m-1) \leq (m-1)\bar{a}=\frac{m-1}{m}\sum_{i=1}^m R(e_i,Je_i,e_i,Je_i), 
\end{equation}
and 
\begin{eqnarray}\label{eq b bar}
    && f\left(B, \frac{(m-2)(m^2-1)}{2m}\right) 
    \leq  \frac{(m-2)(m^2-1)}{2m} \bar{b}  \\
    &=& 2\frac{m-1}{m}\left( \sum_{1\leq i< j\leq m}^m R(e_i,Je_i,e_j,Je_j) - R(e_1,Je_1,e_2,Je_2)\right). \nonumber
\end{eqnarray}

Note that 
\begin{equation*}
        \a_m=(m^2-1)+2+(m-1)+\frac{(m-2)(m^2-1)}{2m}.
\end{equation*}
If $R$ has $\a_m$-nonnegative curvature operator of the second kind, then we have 
\begin{eqnarray*}\label{eq R nonnegative ob}
 0 &\leq & \sum_{1\leq i <j \leq m} \left( \mathring{R}(\vp^-_{ij}, \vp^-_{ij}) + \mathring{R}(\psi^-_{ij}, \psi^-_{ij}) \right) +\sum_{k=1}^{m-1} \mathring{R}(\eta_k,\eta_k)  \\
 && + \mathring{R}(\vp^+_{12},\vp^+_{12})+\mathring{R}(\psi^+_{12},\psi^+_{12}) \nonumber \\
 &&  + f(A, m-1)  +f\left(B, \frac{(m-2)(m^2-1)}{2m}\right). \nonumber
\end{eqnarray*}
Substituting \eqref{eq sum of negative eigenvaleus}, \eqref{eq R positive basis vp psi}, \eqref{eq a bar}, and \eqref{eq b bar} %and 
%\begin{equation*}\label{eq R1212}
%    \mathring{R}(\vp^+_{12},\vp^+_{12})=\mathring{R}(\psi^+_{12},\psi^+_{12})=2R(e_1,Je_1,e_2,Je_2)
%\end{equation*}
into the above inequality yields 
\begin{eqnarray*}    
 0 &\leq & -\frac{m-1}{2m} S+ 4R(e_1,Je_1,e_2,Je_2) + \frac{m-1}{m}\sum_{i=1}^m R(e_i,Je_i,e_i,Je_i)  \\ && +\frac{2(m-1)}{m}\left( \sum_{1\leq i< j\leq m}^m R(e_i,Je_i,e_j,Je_j) - R(e_1,Je_1,e_2,Je_2)\right) \\
 &=& \frac{2(m+1)}{m} R(e_1,Je_1,e_2,Je_2),
\end{eqnarray*}
where we have used \eqref{eq scalar curvature}. 
The arbitrariness of the orthonormal frame allows us to conclude that $R(e_1,Je_1,e_2,Je_2) \geq 0$ for any orthonormal two-frame $\{e_1,e_2\}$. Hence, $R$ has nonnegative orthogonal bisectional curvature. 

If $R$ has $\a_m$-positive curvature operator of the second kind, then the last two inequalities in the above argument become strict and one concludes that $R$ has positive orthogonal bisectional curvature. 
Applying the results to $-R$ then yields the statements for $\a_m$-negativity and $\a_m$-nonpositivity.
\end{proof}

\begin{proof}[Proof of Proposition \ref{prop H}]
The idea is the same as in the proof of Proposition \ref{prop OB}. The main difference is that we need to separate the terms involving $R(e_1,Je_1,e_1,Je_1)$. 
As before, let $\{e_1, \cdots, e_m, Je_1, \cdots, Je_m\}$ be an orthonormal basis of $V$ and let $\mathcal{E}^+\cup \mathcal{E}^-$ be the orthonormal basis of $S^2_0(V)$ constructed in Lemma \ref{lemma basis +} and Lemma \ref{lemma basis -}. 

Let $A$ be the collection of the values $\mathring{R}(\theta_i,\theta_i)$ for $2 \leq i \leq m$ and $m+2\leq i \leq 2m$, and the values $\mathring{R}(\vp^{+}_{ij},\vp^{+}_{ij})$ and $\mathring{R}(\psi^{+}_{ij},\psi^{+}_{ij})$ for $1\leq i < j \leq m$. 
By Lemma \ref{lemma R basis +}, $A$ consists of two copies of $R(e_i,Je_i,e_i,Je_i)$ for each $2\leq i \leq m$
and two copies of $2R(e_i,Je_i,e_j,Je_j)$ for each $1\leq i < j \leq m$. 
The average of all values in $A$, denoted by $\bar{a}$, is given by 
\begin{eqnarray*}
    \bar{a} &=& \frac{1}{(m-1)(m+2)}\left(2\sum_{i=2}^m R(e_i,Je_i,e_i,Je_i) +4\sum_{1\leq i < j \leq m} R(e_i,Je_i,e_j,Je_j)\right)\\
    &=& \frac{1}{(m-1)(m+2)}\left(S-2R(e_1,Je_1,e_1,Je_1) \right),
\end{eqnarray*}
where we have used \eqref{eq scalar curvature}. 

Let $f(A,x)$ be the function defined in Lemma \ref{lemma average}, then we have by Lemma \ref{lemma average} that
\begin{eqnarray}\label{eq a bar H}
      &&  f\left(A,\frac{(m-1)^2(m+2)}{2m}\right)
     \leq   \frac{(m-1)^2(m+2)}{2m} \bar{a} \\
    & = & \frac{m-1}{2m} \left(S-2R(e_1,Je_1,e_1,Je_1)\right). \nonumber
\end{eqnarray}

Note that 
\begin{equation*}
        \a_m=(m^2-1)+2+\frac{(m-1)^2(m+2)}{2m}.
\end{equation*}
Since $\mathring{R}$ is $\a_m$-nonnegative, we obtain
\begin{eqnarray*}
 0 &\leq & \sum_{1\leq i <j \leq m} \left( \mathring{R}(\vp^-_{ij}, \vp^-_{ij}) + \mathring{R}(\psi^-_{ij}, \psi^-_{ij}) \right) +\sum_{k=1}^{m-1} \mathring{R}(\eta_k,\eta_k)   \\
 && + \mathring{R}(\theta_{1},\theta_{1}) +\mathring{R}(\theta_{m+1},\theta_{m+1}) \\
 &&  +f\left(A,\frac{(m-1)^2(m+2)}{2m}\right).
 \end{eqnarray*}
Substituting \eqref{eq sum of negative eigenvaleus}, \eqref{eq R positive basis alpha}, and \eqref{eq a bar H}
%\begin{equation*}
 %   \mathring{R}(\theta_{1},\theta_{1}) =\mathring{R}(\theta_{m+1},\theta_{m+1})=R(e_1,Je_1,e_1,Je_1)
%\end{equation*}
into the above inequality yields 
\begin{eqnarray*}
    0 &\leq & -\frac{m-1}{2m}S +2R(e_1,Je_1,e_1,Je_1) \\
    && +\frac{m-1}{2m} (S-2R(e_1,Je_1,e_1,Je_1)) \\
&=&   \frac{m+1}{m}   R(e_1,Je_1,e_1,Je_1).
\end{eqnarray*}
Since the orthonormal frame $\{e_1, \cdots, e_m, Je_1, \cdots, Je_m\}$ is arbitrary, we conclude that $R$ has nonnegative holomorphic sectional curvature. 
Other statements can be proved similarly. 
\end{proof}

\begin{proof}[Proof of Theorem \ref{thm OB +}]
By Proposition \ref{prop OB}, $M$ has positive orthogonal bisectional curvature. 
$M$ is biholomorphic to $\CP^m$ by \cite{Chen07} and \cite{GZ10} (or \cite{Wilking13}).
\end{proof}

\section{Orthogonal Ricci Curvature}
We prove part (3) of Theorem \ref{thm algebra R} in this section. 
For convenience, we restate it in an equivalent way with the only difference being shifting the dimension from $m$ to $m+1$.

\begin{proposition}\label{prop Ric perp m+1}
    Let $R$ be a K\"ahler algebraic curvature operator on a complex Euclidean vector space $(V,g,J)$ of complex dimension $(m+1)\geq 2$. 
    Let \begin{equation*}
        \tilde{\b}_m:=\b_{m+1}=\frac{m(m+2)(3m+5)}{2(m+1)}.
    \end{equation*}
If $R$ has $\tilde{\b}_m$-nonnegative (respectively, $\tilde{\b}_m$-positive, $\tilde{\b}_m$-nonpositive, $\tilde{\b}_m$-negative) curvature operator of the second kind, then $R$ has nonnegative (respectively, positive, nonpositive, negative) orthogonal Ricci curvature. 
\end{proposition}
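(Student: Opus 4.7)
\textbf{Proof proposal for Proposition \ref{prop Ric perp m+1}.}

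The plan is to follow the same template used in the proofs of Propositions \ref{prop flat}, \ref{prop OB}, and \ref{prop H}: fix an orthonormal basis of $V$ adapted to the vector in whose direction we wish to read off orthogonal Ricci curvature, expand the $\tilde{\beta}_m$-nonnegativity inequality in the basis $\mathcal{E}^+\cup \mathcal{E}^-$ of $S^2_0(V)$, and use the averaging Lemma \ref{lemma average} to dispose of the eigenvalues we do not want to track.

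First, fix $p\in M$ and $0\neq X\in V$. After rescaling we may assume $|X|=1$ and extend $X$ to an orthonormal basis $\{e_1=X,e_2,\dots,e_{m+1},Je_1,\dots,Je_{m+1}\}$ of $V$. Using $\eqref{eq Ricci curvature}$ and $\eqref{eq bisectional curvature}$, the orthogonal Ricci curvature at $e_1$ is
\begin{equation*}
 \Ric^\perp(e_1,e_1)=\sum_{j=2}^{m+1}R(e_1,Je_1,e_j,Je_j).
\end{equation*}
By Lemma \ref{lemma R basis +}, this equals
\begin{equation*}
 \Ric^\perp(e_1,e_1)=\tfrac{1}{4}\sum_{j=2}^{m+1}\bigl(\mathring{R}(\vp^+_{1j},\vp^+_{1j})+\mathring{R}(\psi^+_{1j},\psi^+_{1j})\bigr),
\end{equation*}
so our task reduces to showing that the right-hand side is nonnegative under the hypothesis.

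Next, I will single out the $2m$ basis vectors $\{\vp^+_{1j},\psi^+_{1j}\}_{j=2}^{m+1}$ together with the entire $E^-$ block (of dimension $(m+1)^2-1=m^2+2m$), and let $A$ be the collection of the remaining $(m+1)(m+2)-2m=m^2+m+2$ diagonal entries $\mathring{R}(\cdot,\cdot)$ coming from $\{\vp^+_{ij},\psi^+_{ij}\}_{2\leq i<j\leq m+1}$ and $\{\theta_i\}_{i=1}^{2(m+1)}$. By Lemma \ref{lemma R basis +} and $\eqref{eq scalar curvature}$, the sum of entries in $A$ equals $S-4\Ric^\perp(e_1,e_1)$, so the average satisfies
\begin{equation*}
 \bar a=\frac{S-4\Ric^\perp(e_1,e_1)}{m^2+m+2}.
\end{equation*}
A short calculation shows $\tilde{\beta}_m-(m^2+2m)-2m=\frac{m(m^2+m+2)}{2(m+1)}$, which is the precise number of $A$-entries we will feed into Lemma \ref{lemma average}. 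Combining $\tilde{\beta}_m$-nonnegativity with Lemma \ref{lemma R basis -} and the averaging estimate gives
\begin{equation*}
 0\leq -\frac{m}{2(m+1)}S+4\Ric^\perp(e_1,e_1)+\frac{m(m^2+m+2)}{2(m+1)}\bar a,
\end{equation*}
and substituting the value of $\bar a$ makes the $S$ terms cancel, leaving $0\leq \tfrac{2(m+2)}{m+1}\Ric^\perp(e_1,e_1)$. The strict, nonpositive, and negative variants follow by replacing the inequality with its strict version or applying the argument to $-R$.

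The work is essentially routine bookkeeping once the correct split is found; the only genuine subtlety is choosing the split so that $\tilde{\beta}_m$ appears organically. The main obstacle, and the reason the constant $\tilde{\beta}_m=\b_{m+1}$ is sharp, is that the $2m$ ``forbidden'' eigenvectors $\vp^+_{1j},\psi^+_{1j}$ must be paired in the right proportion with the $E^-$-block (which contributes $-\tfrac{m}{2(m+1)}S$) so that the scalar curvature drops out of the final inequality; any smaller exponent than $\tilde{\beta}_m$ would force us to retain fewer entries in $A$ and spoil this cancellation, which is precisely why $\CP^{m}\times\CP^1$ (in the restated dimension) saturates the bound.
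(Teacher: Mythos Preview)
Your argument is correct and is in fact a genuinely different (and more streamlined) route than the paper's.

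The paper works with the product decomposition $V=V_0\oplus V_1$ with $V_0=\spn\{e_0,Je_0\}$, introduces the additional orthonormal tensors $\tau_1,\tau_2,h_1,\dots,h_{4m},\zeta$, and has to compute $\mathring{R}(\zeta,\zeta)$ and $\sum_i \mathring{R}(h_i,h_i)=2\Ric^\perp(e_0,e_0)$ separately; its split of $\tilde\b_m$ is $(m^2-1)+4m+1+\tfrac{m(m^2+m+2)}{2(m+1)}$, corresponding to the $\mathcal{E}^-$-block of $S^2_0(V_1)$, the $h_i$'s, $\zeta$, and an averaged piece of the remaining $m^2+m+2$ tensors. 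You instead stay entirely within the basis $\mathcal{E}^+\cup\mathcal{E}^-$ of $S^2_0(V)$ in complex dimension $m+1$, single out the $2m$ tensors $\vp^+_{1j},\psi^+_{1j}$ (whose diagonal values give exactly $4\Ric^\perp(e_1,e_1)$), and average over the same-sized remainder; your split is $(m^2+2m)+2m+\tfrac{m(m^2+m+2)}{2(m+1)}$. The scalar-curvature cancellation and the final coefficient $\tfrac{2(m+2)}{m+1}$ come out identically.

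Your approach is more elementary: it needs nothing beyond Lemmas \ref{lemma R basis +}--\ref{lemma R basis -} and avoids introducing $\zeta$ and the $h_i$. The paper's approach has the advantage that its basis consists of eigenvectors of $\mathring{R}$ on the model space $\CP^m\times\CP^1$, which makes the sharpness of $\tilde\b_m$ visible and ports directly to the $\CP^m\times\C$ model used in Section~8.
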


\begin{proof}
The key idea is to use $\CP^m \times \CP^1$ as a model space. The eigenvalues and their associated eigenvectors of the curvature operator of the second kind on $\CP^m \times \CP^1$ are determined in \cite{Li22product}. 

We construct an orthonormal basis of $S^2_0(V)$.  
Let $\{e_0, \cdots, e_m, Je_0, \cdots, Je_m\}$ be an orthonormal basis of $V$. 
Let 
\begin{align*}
    V_0 &=\spn\{e_0,Je_0\}, \\
    V_1 &=\spn\{e_1, \cdots, e_m, Je_1, \cdots, Je_m \}.
\end{align*} 
Then $V=V_0\oplus V_1$.

Notice that $h \in S^2_0(V_i)$ can be viewed as an element in $S^2_0(V)$ via 
\begin{equation*}
    h(X, Y)=h(\pi_i(X),\pi_i(Y))
\end{equation*}
where $\pi_i:V \to V_i$ is the projection map for $i=1,2$. 
Thus, $S^2_0(V_i)$ can be identified with a subspace of $S^2_0(V)$ for $i=1,2$.

Let $\mathcal{E}^+\cup \mathcal{E}^-$ be the orthonormal basis of $S^2_0(V_1)$ constructed in Lemma \ref{lemma basis +} and Lemma \ref{lemma basis -}. An orthonormal basis of $S^2_0(V_0)$ is given by 
\begin{eqnarray*}
    \tau_1 &=& \frac{1}{2\sqrt{2}} \left(e_0 \odot e_0 -Je_0 \odot Je_0 \right), \\
    \tau_2 &=& \frac{1}{\sqrt{2}} \left( e_0 \odot Je_0 \right)
\end{eqnarray*}
Next, we define the following traceless symmetric two-tensors on $V$: 
\begin{eqnarray*}
    h_i &=& \frac{1}{\sqrt{2}} e_0 \odot e_i \text{ for } 1\leq i\leq m, \\
    h_{m+i} &=& \frac{1}{\sqrt{2}} e_0 \odot J e_i \text{ for } 1\leq i\leq m,  \\
    h_{2m+i} &=& \frac{1}{\sqrt{2}} Je_0 \odot  e_i \text{ for } 1\leq i\leq m,  \\
    h_{3m+i} &=& \frac{1}{\sqrt{2}} Je_0 \odot  J e_i \text{ for } 1\leq i\leq m. 
\end{eqnarray*}
We also define 
\begin{equation*}
    \zeta= \frac{1}{\sqrt{8m(m+1)}} \left(m(e_0\odot e_0 +Je_0 \odot Je_0) -\sum_{i=1}^m (e_i\odot e_i +Je_i \odot Je_i)\right).
\end{equation*}
Then it is a straightforward computation to verify that 
$$\mathcal{E}^+\cup \mathcal{E}^- \cup \{\tau_i\}_{i=1}^2 \cup \{h_i\}_{i=1}^{4m} \cup \{\zeta\} $$ forms an orthonormal basis of $S^2_0(V)$. 
This corresponds to the orthogonal decomposition
\begin{equation*}
    S^2_0(V)=S^2_0(V_1) \oplus S^2_0(V_0) \oplus \spn\{u \odot v: u \in V_0, v\in V_1 \} \oplus \R \zeta.
\end{equation*}

The next step is to calculate the diagonal elements of the matrix representation of $\mathring{R}$ with respect to this basis. 
By Lemma \ref{lemma ij}, we have
\begin{equation}\label{eq R gamma}
    \mathring{R}(\tau_1,\tau_1)=\mathring{R}(\tau_2,\tau_2)=R(e_0,Je_0,e_0,Je_0). 
\end{equation}
Using Lemma \ref{lemma ij} again, we obtain 
\begin{eqnarray*}
     \sum_{i=1}^{4m} \mathring{R}(h_i,h_i) 
   % &=& \frac{1}{2} \sum_{i=1}^{m}\mathring{R}(e_0\odot e_i, e_0 \odot e_i) +\frac{1}{2}\sum_{i=1}^{m} \mathring{R}(e_0\odot Je_i, e_0 \odot Je_i) \\
    %&& + \frac{1}{2} \sum_{i=1}^{m}\mathring{R}(Je_0\odot e_i, Je_0 \odot e_i) +\frac{1}{2}\sum_{i=1}^{m} \mathring{R}(Je_0\odot Je_i, Je_0 \odot Je_i) \\
    &=& \sum_{i=1}^{m}R(e_0,e_i,e_0,e_i) +\sum_{i=1}^{m}R(e_0,Je_i,e_0,Je_i)  \\
   && +  \sum_{i=1}^{m}R(Je_0,e_i,Je_0,e_i) +\sum_{i=1}^{m}R(Je_0,Je_i,Je_0,Je_i)  \\
   &=& 2 \sum_{i=1}^{m} \left( R(e_0,e_i,e_0,e_i) + R(e_0,Je_i,e_0,Je_i) \right) \\
   &=& 2 \sum_{i=1}^{m}R(e_0,Je_0,e_i,Je_i) \\
   &=& 2\Ric^\perp (e_0,e_0).
\end{eqnarray*}
We calculate using \eqref{eq R iiJiJi} that 
\begin{eqnarray*}
    \mathring{R}(\zeta,\zeta) &=&  \frac{m^2}{8m(m+1)} \mathring{R}(e_0\odot e_0 +Je_0 \odot Je_0, e_0\odot e_0 +Je_0 \odot Je_0) \\
    && - \frac{2m}{8m(m+1)} \sum_{i=1}^m\mathring{R}(e_0\odot e_0 +Je_0 \odot Je_0, e_i\odot e_i +Je_i \odot Je_i) \\
    && +\frac{1}{8m(m+1)} \sum_{i,j=1}^m \mathring{R}(e_i\odot e_i +Je_i \odot Je_i, e_j\odot e_j +Je_j \odot Je_j)\\
    %&=& -\frac{m}{(m+1)}R(e_0,Je_0,e_0,Je_0) +\frac{2}{m+1} \sum_{i=1}^m R(e_0,Je_0,e_i,Je_i) \\
    %&& -\frac{1}{m(m+1)}\sum_{i,j=1}^m R(e_i,Je_i,e_j,Je_j) \\
     &=& -\frac{m}{(m+1)}R(e_0,Je_0,e_0,Je_0) +\frac{2}{m+1} \Ric^\perp (e_0,e_0) \\
    && -\frac{1}{m(m+1)}\sum_{i,j=1}^m R(e_i,Je_i,e_j,Je_j).
\end{eqnarray*}

Let $A$ be the collection of the values $\mathring{R}(\tau_i,\tau_i)$ for $i=1,2$, $\mathring{R}(\theta_i,\theta_i)$ for $1\leq i \leq 2m$, $\mathring{R}(\vp^{+}_{ij},\vp^{+}_{ij})$ and $\mathring{R}(\psi^{+}_{ij},\psi^{+}_{ij})$ for $1\leq i < j \leq m$.
By Lemma \ref{lemma R basis +} and \eqref{eq R gamma}, we know that $A$ contains two copies of $R(e_i,Je_i,e_i,Je_i)$ for each $0\leq i \leq m$ and two copies of $2R(e_i,Je_i,e_j,Je_j)$ for each $1\leq i < j \leq m$. 
Therefore, $\bar{a}$, the average of all values in $A$, is given by 
\begin{eqnarray*}
    \bar{a} = \frac{2}{m^2+m+2} \left(  \sum_{i,j=1}^m R(e_i,Je_i,e_j,Je_j) +R(e_0,Je_0,e_0,Je_0)\right) .
\end{eqnarray*}

Since $R$ has $\tilde{\b}_m$-nonnegative curvature operator of the second kind with 
\begin{equation*}
        \tilde{\b}_m=(m^2-1)+4m+1+\frac{m(m^2+m+2)}{2(m+1)},
\end{equation*}
we obtain that
\begin{eqnarray}\label{eq R nonnegative Ricci perp}
 0 &\leq & \sum_{1\leq i <j \leq m} \left( \mathring{R}(\vp^-_{ij}, \vp^-_{ij}) + \mathring{R}(\psi^-_{ij}, \psi^-_{ij}) \right) +\sum_{k=1}^{m-1} \mathring{R}(\eta_k,\eta_k)  \\ 
 && +\sum_{i=1}^{4m} \mathring{R}(h_i,h_i) + \mathring{R}(\zeta,\zeta) + f\left(A, \frac{m(m^2+m+2)}{2(m+1)}\right), \nonumber
\end{eqnarray}
where $f(A,x)$ is the function defined in Lemma \ref{lemma average}.
By Lemma \ref{lemma average}, we have 
\begin{eqnarray}\label{eq f estimate}
   && f\left(A, \frac{m(m^2+m+2)}{2(m+1)}\right) 
   \leq  \frac{m(m^2+m+2)}{2(m+1)} \bar{a} \\
    &=& \frac{m}{m+1}\left(  \sum_{i,j=1}^m R(e_i,Je_i,e_j,Je_j) -R(e_0,Je_0,e_0,Je_0)\right). \nonumber
\end{eqnarray}
By \eqref{eq sum of negative eigenvaleus} and \eqref{eq scalar curvature}, we have  
\begin{eqnarray}\label{eq sum of negative eigenvalues new}
 && \sum_{1\leq i <j \leq m} \left( \mathring{R}(\vp^-_{ij}, \vp^-_{ij}) + \mathring{R}(\psi^-_{ij}, \psi^-_{ij}) \right) +\sum_{k=1}^{m-1} \mathring{R}(\eta_k,\eta_k) \\
 &=&   -\frac{m-1}{m}\sum_{i,j=1}^m R(e_i,Je_i,e_j,Je_j). \nonumber
\end{eqnarray}
Substituting \eqref{eq sum of negative eigenvalues new}, the expressions for $\sum_{i=1}^{4m} \mathring{R}(h_i,h_i)$ and $\mathring{R}(\zeta,\zeta)$, and \eqref{eq f estimate} into \eqref{eq R nonnegative Ricci perp} yields
\begin{eqnarray*}
 0 &\leq &  -\frac{m-1}{m}\sum_{i,j=1}^m R(e_i,Je_i,e_j,Je_j)+ 2\Ric^\perp (e_0,e_0) \\
&&-\frac{m}{m+1}R(e_0,Je_0,e_0,Je_0) +\frac{2}{m+1}  \Ric^\perp(e_0,e_0) \\
&& -\frac{1}{m(m+1)}\sum_{i,j=1}^m R(e_i,Je_i,e_j,Je_j)\\
&& +\frac{m}{m+1}\left(  \sum_{i,j=1}^m R(e_i,Je_i,e_j,Je_j) +R(e_0,Je_0,e_0,Je_0)\right) \\
&=& \frac{2(m+2)}{m+1}\Ric^\perp(e_0,e_0).
\end{eqnarray*}
Since the orthonormal frame $\{e_0, \cdots, e_m, Je_0, \cdots, Je_m\}$ is arbitrary, we conclude that $\Ric^\perp \geq 0$. 
Other statements can be proved similarly. 
\end{proof}

\begin{proof}[Proof of Theorem \ref{thm Ric perp}]
By Proposition \ref{prop OB}, $M$ has positive orthogonal Ricci curvature. If $M$ is in addition closed, one can use \cite[Theorem 2.2]{Ni21} to conclude that $h^{p,0}=0$ for all $1\leq p\leq m$. In particular, $M$ is simply-connected and projective.
\end{proof}

By the classification of closed three-dimensional K\"ahler manifolds with positive orthogonal Ricci curvature in \cite{NWZ21}, we get the following corollary of Theorem \ref{thm Ric perp}. 
\begin{corollary}\label{corollary Ric perp 3D}
A closed three-dimensional K\"ahler manifold with $\frac{44}{3}$-positive curvature operator of the second kind is either biholomorphic to $\CP^3$ or biholomorphic to $\mathbb{Q}^3$, the smooth quadratic hypersurface in $\CP^4$. 
\end{corollary}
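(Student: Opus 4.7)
The plan is to apply Theorem \ref{thm Ric perp} in the specific dimension $m=3$ and then invoke the classification result of Ni--Wang--Zheng \cite{NWZ21} for closed three-dimensional K\"ahler manifolds with positive orthogonal Ricci curvature.

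First I would verify the numerical match. With $m=3$, the constant in \eqref{eq beta m def} becomes
\begin{equation*}
    \b_3 = \frac{3\cdot 27 + 2\cdot 9 - 3\cdot 3 - 2}{2\cdot 3} = \frac{81+18-9-2}{6} = \frac{88}{6} = \frac{44}{3}.
\end{equation*}
So the hypothesis that $\mathring{R}$ is $\frac{44}{3}$-positive is precisely the hypothesis of Theorem \ref{thm Ric perp} for $m=3$.

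Next I would apply Theorem \ref{thm Ric perp} directly: the closed K\"ahler three-manifold $M$ has positive orthogonal Ricci curvature $\Ric^\perp > 0$, and in addition $h^{p,0}=0$ for all $1\le p\le 3$, so $M$ is simply connected and projective. Finally, I would invoke the classification in \cite{NWZ21}, which states that a closed K\"ahler three-manifold with positive orthogonal Ricci curvature is biholomorphic either to $\CP^3$ or to the smooth quadric hypersurface $\mathbb{Q}^3 \subset \CP^4$. This yields the conclusion.

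There is essentially no obstacle in this argument: it is a direct specialization of Theorem \ref{thm Ric perp} followed by a citation. The only thing to check carefully is the arithmetic identification $\b_3=\frac{44}{3}$, which is immediate from the definition of $\b_m$.
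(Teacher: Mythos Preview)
Your proposal is correct and follows exactly the paper's approach: specialize Theorem \ref{thm Ric perp} to $m=3$ (noting $\b_3=\tfrac{44}{3}$) to obtain $\Ric^\perp>0$, then invoke the classification of \cite{NWZ21}. The extra remarks about $h^{p,0}$ and simple connectedness are not needed but do no harm.
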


\section{Mixed Curvature}

%In this section, we prove a theorem that is similar to Theorem \ref{thm Ric perp}.
%\begin{theorem}\label{thm Ricci}
%Let $(M^m,g,J)$ be a K\"ahler manifold of complex dimension $m\geq 2$. 
%Suppose $M$ has $\g_m$-positive curvature operator of the second kind, where
%\begin{equation}\label{eq gamma m def}
%    \g_m:= \frac{3m^2+2m-1}{2}.
%\end{equation}
%Then for any $p\in M$ and $0 \neq X\in T_pM$, it holds that
%\begin{equation}\label{eq mixed curvature}
 %   2\Ric(X,X)-R(X,JX,X,JX)/|X|^2 >0.
%\end{equation}
%If $M$ is further assumed to be closed, then $M$ has $h^{p,0}=0$ for any $1\leq p \leq m$. In particular, $M$ is simply-connected and projective. 
%\end{theorem}

%Chu, Lee, and Tam \cite{CLT20} introduced a family of curvature conditions for K\"ahler manifolds called mixed curvature. They are defined as
%$$\mathcal{C}_{\a,\b}(X):=\a \Ric(X,\bar{X})+\b R(X,\bar{X},X,\bar{X})/|X|^2.$$
%The curvature condition \eqref{eq mixed curvature} is the same as $\mathcal{C}_{2,-1} > 0$. We refer the reader to their paper for more details.

Part (4) of Theorem \ref{thm algebra R} follows immediately from the following proposition. 
\begin{proposition}\label{prop mixed curvature}
    Let $(V,g,J)$ be a complex Euclidean vector space with complex dimension $(m+1)\geq 2$. Let $R$ be a K\"ahler algebraic curvature operator on $V$. 
    Let $$\tilde{\g}_m:=\g_{m+1}=\frac{3m^2+8m+4}{2}.$$
   If $R$ has $\tilde{\g}_m$-nonnegative (respectively, $\tilde{\g}_m$-positive, $\tilde{\g}_m$-nonpositive, $\tilde{\g}_m$-negative) curvature operator of the second kind, then the expression 
    \begin{equation*}
        2\Ric(X,X)-R(X,JX,X,JX)/|X|^2 
    \end{equation*}
    is nonnegative (respectively, positive, nonpositive, negative) for any nonzero vector $X$ in $V$. 
\end{proposition}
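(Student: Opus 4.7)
The approach mirrors the proof of Proposition \ref{prop Ric perp m+1}, using the product $\mathbb{CP}^m \times \mathbb{C}$ as the model space. I will use the same orthonormal basis of $S^2_0(V)$ constructed there: with $V_0 = \spn\{e_0, Je_0\}$ and $V_1 = \spn\{e_1, Je_1, \ldots, e_m, Je_m\}$, the basis is $\mathcal{E}^+ \cup \mathcal{E}^- \cup \{\tau_1, \tau_2\} \cup \{h_i\}_{i=1}^{4m} \cup \{\zeta\}$. It suffices to show that $2\Ric(e_0, e_0) - R(e_0, Je_0, e_0, Je_0) \geq 0$ for an arbitrary unit vector $e_0$. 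Setting $P = R(e_0, Je_0, e_0, Je_0)$, $Q = \Ric^\perp(e_0, e_0) = \sum_{i=1}^m R(e_0, Je_0, e_i, Je_i)$, and $T = \sum_{i,j=1}^m R(e_i, Je_i, e_j, Je_j)$, the target inequality becomes $P + 2Q \geq 0$.

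The plan is to decompose
\begin{equation*}
\tilde{\gamma}_m = \frac{3m^2 + 8m + 4}{2} = (m^2 - 1) + 4m + 1 + 2 + \frac{m^2}{2},
\end{equation*}
and in the $\tilde{\gamma}_m$-nonnegativity inequality, include explicitly the entirety of $\mathcal{E}^-$ (which contributes $m^2 - 1$ terms summing to $-\frac{m-1}{m}T$), all $4m$ of the mixed tensors $h_i$ (contributing $2Q$), the singleton $\zeta$ (contributing $-\frac{m}{m+1}P + \frac{2}{m+1}Q - \frac{1}{m(m+1)}T$), and both $\tau_1, \tau_2$ (each contributing $P$). The remaining ``budget'' of $\frac{m^2}{2}$ will be spent averaging over the collection $A$ of the $m(m+1)$ values from $\mathcal{E}^+$ (on $V_1$), whose mean is $\bar{a} = \frac{2T}{m(m+1)}$ by Lemma \ref{lemma R basis +}. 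Lemma \ref{lemma average} then yields $f(A, m^2/2) \leq \frac{mT}{m+1}$.

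Combining these contributions, I expect the $T$-dependent terms to cancel exactly:
\begin{equation*}
-\frac{m-1}{m} - \frac{1}{m(m+1)} + \frac{m}{m+1} = \frac{-(m^2-1) - 1 + m^2}{m(m+1)} = 0,
\end{equation*}
leaving $0 \leq \frac{m+2}{m+1}(P + 2Q)$, which gives $P + 2Q \geq 0$ as desired. The strict $\tilde{\gamma}_m$-positive version follows because at least one of the inequalities is strict, and the nonpositive/negative cases are obtained by applying the result to $-R$.

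The main obstacle is identifying the correct decomposition of $\tilde{\gamma}_m$ and verifying the miraculous cancellation of the $T$ terms; this is what forces the coefficient $\frac{m^2}{2}$ (and ultimately the constant $\tilde{\gamma}_m$) to be exactly what it is. Once the decomposition is chosen correctly, the rest is routine bookkeeping using the identities from Section 4 and Lemma \ref{lemma average}.
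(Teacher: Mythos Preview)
Your proposal is correct and follows essentially the same approach as the paper's proof: the same model space $\CP^m\times\C$, the same orthonormal basis, the same decomposition $\tilde{\g}_m=(m^2-1)+4m+3+\tfrac{1}{2}m^2$ (your $4m+1+2$ is just a regrouping), the same averaging over $\mathcal{E}^+$ via Lemma~\ref{lemma average}, and the same cancellation of the $T$-terms to leave $\tfrac{m+2}{m+1}(P+2Q)\geq 0$.
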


\begin{proof}[Proof of Proposition \ref{prop mixed curvature}]
The proof is similar to that of Proposition \ref{prop Ric perp m+1}, but with the different model space $\CP^m \times \C$.  The eigenvalues and their associated eigenvectors of the curvature operator of the second kind on $\CP^m \times \C$ are determined in \cite{Li22product}. 

Let $\{e_0, \cdots, e_m, Je_0, \cdots, Je_m\}$ be an orthonormal basis of $V$. 
Then $V=V_0\oplus V_1$, where $V_0 =\spn\{e_0,Je_0\}$ and $ V_1 =\spn\{e_1, \cdots, e_m, Je_1, \cdots, Je_m \}$.
Let $$\mathcal{E}^+\cup \mathcal{E}^- \cup \{\tau_i\}_{i=1}^2 \cup \{h_i\}_{i=1}^{4m} \cup \{\zeta\} $$ be the orthonormal basis of $S^2_0(V)$ constructed in the proof of Proposition \ref{prop Ric perp m+1}. 

Let $A$ be the collection of the values $\mathring{R}(\theta_i,\theta_i)$ for $1\leq i \leq 2m$, $\mathring{R}(\vp^{+}_{ij},\vp^{+}_{ij})$ and $\mathring{R}(\psi^{+}_{ij},\psi^{+}_{ij})$ for $1\leq i < j \leq m$.
By Lemma \ref{lemma R basis +}, we know that $A$ contains two copies of $R(e_i,Je_i,e_i,Je_i)$ for each $1 \leq i \leq m$ and two copies of $2R(e_i,Je_i,e_j,Je_j)$ for each $1\leq i < j \leq m$. 
Therefore, $\bar{a}$, the average of all values in $A$, is given by 
\begin{eqnarray*}
    \bar{a} = \frac{2}{m(m+1)} \left( \sum_{i,j=1}^m R(e_i,Je_i,e_j,Je_j) \right).
\end{eqnarray*}

Let $f(A,x)$ be the function defined in Lemma \ref{lemma average}. Then we have 
\begin{eqnarray}\label{eq 5.1}
    f\left(A, \frac{1}{2}m^2\right) \leq \frac{1}{2}m^2 \bar{a}=\frac{m}{m+1}\sum_{i,j=1}^m R(e_i,Je_i,e_j,Je_j).
\end{eqnarray}

Note that 
\begin{equation*}
    \tilde{\g}_m=(m^2-1)+4m+3 + \frac{1}{2}m^2.
\end{equation*}
Since $R$ has $\tilde{\g}_m$-nonnegative curvature operator of the second kind, 
we obtain that
\begin{eqnarray*}
 0 &\leq & \sum_{1\leq i <j \leq m} \left( \mathring{R}(\vp^-_{ij}, \vp^-_{ij}) + \mathring{R}(\psi^-_{ij}, \psi^-_{ij}) \right) +\sum_{k=1}^{m-1} \mathring{R}(\eta_k,\eta_k)  \\ 
 &&  +\sum_{i=1}^{4m} \mathring{R}(h_i,h_i)  + \mathring{R}(\zeta,\zeta) +  \mathring{R}(\tau_1,\tau_1)+  \mathring{R}(\tau_2,\tau_2) \nonumber \\
 && + f\left(A, \frac{1}{2}m^2\right). \nonumber
\end{eqnarray*}
Substituting \eqref{eq sum of negative eigenvalues new}, \eqref{eq 5.1}, \eqref{eq R gamma}, and the expressions for $\sum_{i=1}^{4m} \mathring{R}(h_i,h_i)$ and $\mathring{R} (\zeta,\zeta)$ into the above inequality produces 
\begin{eqnarray*}\label{eq R nonnegative Ricci perp mixed}
 0 &\leq &  -\frac{m-1}{m} \sum_{i,j=1}^m R(e_i,Je_i,e_j,Je_j)   + 2\Ric^\perp(e_0,e_0) \\
&&-\frac{m}{m+1}R(e_0,Je_0,e_0,Je_0) +\frac{2}{m+1} \Ric^\perp(e_0,e_0) \\
&& -\frac{1}{m(m+1)}\sum_{i,j=1}^m R(e_i,Je_i,e_j,Je_j) +2R(e_0,Je_0,e_0,Je_0)  \\
&& +\frac{m}{m+1}\sum_{i,j=1}^m R(e_i,Je_i,e_j,Je_j)\\
&=& \frac{m+2}{m+1}\left(2\Ric(e_0,e_0)-R(e_0,Je_0,e_0,Je_0) \right).
\end{eqnarray*}
Finally, the arbitrariness of the orthonormal frame $\{e_0, \cdots, e_m, Je_0, \cdots, Je_m\}$ allows us to conclude that 
$$2\Ric(X,X)-R(X,JX,X,JX)/|X|^2 \geq 0$$
for any nonzero vector $X \in V$. Other statements can be proved similarly. 

\end{proof}

\section*{Acknowledgments}
I am grateful to the anonymous referees for their valuable suggestions and comments. In particular, one referee pointed out the simple proof of \eqref{eq sum of negative eigenvaleus}, saving two pages of computations. 

\bibliographystyle{alpha}
\bibliography{ref}

\end{document}